\newtheorem{theorem}{Theorem}[section]
\newtheorem{definition}[theorem]{Definition} 
\newtheorem{problem}[theorem]{Problem}
\numberwithin{equation}{section}
\def\tmu{{\mu^{\times}}}
\def\Q{{\mathbb {Q}}}
\def\Z{{\mathbb Z}}  
\def\R{{\mathbb R}} 
\def\C{{\mathbb C}} 
\def\HH{{\mathbb H}} 
\def\eps{{\varepsilon}}
\def\cS{{\mathcal S}}
\def\ggeff{{\gg_{{\rm eff}}}\,}
\def\pp{{\mathfrak p}}
\def\mueff{{\mu_{\rm eff}}} 
\def\nueff{{\nu_{\rm eff}}} 
\def\vbeff{{v_{b, \rm eff}}}
\def\resp{{\rm resp., }}
\def\rme{{\rm e}} 
\def\rmi{{\rm i}}
\def\rmj{{\rm j}}
\def\rmk{{\rm k}}
\def\cN{{\mathcal N}}
\def\zprim{{\mathbb{Z}^2_{{\rm prim}}}}
\def\vv{{\rm v}}
\def\ovQ{{{\overline \Q}^*}}
\def\bG{{{\overline \Gamma}}}
\def\bfx{{\bf x}}
\def\bfy{{\bf y}}
\def\bfz{{\bf z}}
\def\tmu{{\mu^\times}}
\def\tmueff{{\mu^\times_{\rm eff}}}
\def\beq{\begin{equation}}
\def\eeq{\end{equation}}
\def\alphabar{{\overline {\alpha}}}
\def\tr{{\rm tr}}
\begin{document}

\title{$B'$}

\author{Yann Bugeaud}
\address{Universit\'e de Strasbourg, Math\'ematiques,
7, rue Ren\'e Descartes, 67084 Strasbourg  (France)} 
\address{Institut universitaire de France} 
\email{bugeaud@math.unistra.fr}

\begin{abstract}
Let $n \ge 2$ be an integer and $\alpha_1, \ldots, \alpha_n$ be non-zero algebraic numbers. 
Let $b_1, \ldots , b_n$ be integers with $b_n \not= 0$, and set $B = \max\{3, |b_1|, \ldots , |b_n|\}$. 
For $j =1, \ldots, n$, set $h^* (\alpha_j) = \max\{h(\alpha_j), 1\}$, where $h$ 
denotes the (logarithmic) Weil height. 
Assume that the quantity $\Lambda = b_1 \log \alpha_1 + \cdots + b_n \log \alpha_n$ is nonzero. 
A typical lower bound of $\log |\Lambda|$ given by Baker's theory of linear forms in logarithms takes the shape 
$$
\log |\Lambda| \ge - c(n, D)  \, h^* (\alpha_1) \cdots h^* (\alpha_n) \log B, 
$$
where $c(n,D)$ is positive, effectively computable and depends only on $n$ and on the degree $D$ of the field generated 
by $\alpha_1, \ldots , \alpha_n$. 
However, in certain special cases and in particular when $|b_n| = 1$, this bound can be improved to
$$
\log |\Lambda| - c(n, D)  \, h^* (\alpha_1) \cdots h^* (\alpha_n) \log \frac{B}{h^* (\alpha_n)}.
$$
The term $B / h^* (\alpha_n)$ in place of $B$ 
originates in works of Feldman and Baker 
and is a key tool for improving, in an effective way, the upper bound for the irrationality exponent
of a real algebraic number of degree at least $3$ given by 
Liouville's theorem.
We survey various applications of this refinement to exponents of approximation evaluated at algebraic numbers, 
to the $S$-part of some integer sequences, and to Diophantine equations. 
We conclude with some new results on arithmetical properties of convergents to real numbers. 
\end{abstract}

\subjclass[2010]{11J86, 11J04, 11D59, 11D61}
\keywords{Baker's method, rational approximation, exponent of approximation, Diophantine equation, continued fraction}

\maketitle

\section{Introduction}\label{sec:1}

Baker's theory of linear forms in the logarithms of algebraic numbers provides us with non-trivial, fully 
explicit lower bounds for the distance between $1$ and a product 
$\alpha_1^{b_1} \ldots \alpha_n^{b_n}$ of $n$ integer powers of algebraic numbers, that is (since 
$\log (1 + x)$ is equivalent to $x$ in a neighborhood of $0$), for the absolute value of the linear form
$$
b_1 \log \alpha_1 + \cdots + b_n \log \alpha_n. 
$$
It has been developed by Baker \cite{Ba66} and his followers since 1966 and has applications to 
Diophantine equations and many other topics; the interested reader is directed to the monographs 
\cite{Ba75,BaWu07,Bu18b,EvGy15,EvGy16,ShTi86,Spr93,WaLiv} and to the references quoted therein. 

In short, Baker's theory says that if the linear form in logarithms of algebraic numbers
$$
\Lambda := b_1 \log \alpha_1 + \cdots + b_n \log \alpha_n
$$ 
is nonzero, then its absolute value is at least equal to some effectively computable positive quantity, expressed 
in terms of $n$, the maximum $B$ of the absolute values of $b_1, \ldots , b_n$, and 
the algebraic numbers $\alpha_1, \ldots , \alpha_n$. 

Throughout this paper, $h(\alpha)$ denotes the (logarithmic) Weil height of the 
algebraic number $\alpha$ and we set $h^* (\alpha) = \max\{h(\alpha), 1\}$. Recall that if $r/s$ is a nonzero 
rational number written in its lower form, then its height $h(r/s)$ is equal to the maximum of $\log r$ and $\log s$. 
A typical lower bound for $\log |\Lambda|$, established in \cite{BaWu93,Wa93}, takes the form
\begin{equation} \label{typ}
\log |\Lambda| \ge - c \, h^* (\alpha_1) \ldots h^* (\alpha_n) \log B,
\end{equation}
where $c$ is some effectively computable positive real number depending only on $n$ and on the degree of the 
number field generated by $\alpha_1, \ldots , \alpha_n$. But between the birth of the theory and the proof of such a 
clean result, there have been several steps and many difficulties to overcome. 

First, we point out that a Liouville-type estimate yields a bound with $B$ instead of $\log B$ in \eqref{typ},
while, for most (not all) of the applications, a lower bound with a dependence on $B$ in $o(B)$ would be sufficient. 
The first result of Baker \cite{Ba66} involves a factor $(\log B)^{n+1+ \eps}$,  
where $\eps$ is an arbitrary positive real number.

We gather in Theorem \ref{lflog} below immediate consequences of estimates of 
Waldschmidt \cite{Wa93,WaLiv} and Matveev \cite{Mat00}; see also \cite[Theorems 1.1 and 1.2]{Bu18b}.

\begin{theorem}   \label{lflog} 
Let $n \ge 1$ be an integer. 
Let $\alpha_1, \ldots, \alpha_n$ be non-zero algebraic numbers. 
Let $b_1, \ldots , b_n$ be integers with $b_n \not= 0$.
Let $D$ be the degree over $\Q$ 
of the number field $\Q(\alpha_1, \ldots, \alpha_n)$. 
Set
$$
B = \max\{3, |b_1|, \ldots , |b_n|\},
$$ 
$$
B' = \max\Bigl\{3,   \max_{1 \le j \le n-1} 
\Bigl\{ {|b_n| \over h^* (\alpha_j)} + {|b_j| \over h^* (\alpha_n)} \Bigr\} \Bigr\}, 
$$
and
$$
B''= \max\Bigl\{3, \max\Bigl\{ |b_j| \ {h^* (\alpha_j) \over h^* (\alpha_n)} : 1 \le j \le n \Bigr\} \Bigr\}.
$$
Assume that 
$$
\Lambda := b_1 \log \alpha_1 + \cdots + b_n \log \alpha_n
$$ 
is nonzero. Then, there exist 
effectively computable positive numbers $c_1, \ldots , c_5$, depending only on $D$, 
such that the following holds. We have
\begin{equation} \label{eq1}
\log |\Lambda|    
\ge - c_1^n  \, h^* (\alpha_1) \ldots h^* (\alpha_n) \log B, 
\end{equation}
and, furthermore,  
\begin{equation} \label{eq2}
\log |\Lambda|    
\ge - c_2^n \, n^{3n} \, h^* (\alpha_1) \ldots h^* (\alpha_n) \log B',
\end{equation}
and
\begin{equation} \label{eq3}
\log |\Lambda|    
\ge - c_3^n  \, h^* (\alpha_1) \ldots h^* (\alpha_n) \log B''. 
\end{equation}
In particular, if $|b_n|=1$, then we have 
\begin{equation} \label{eq4}
\log |\Lambda|    
\ge - c_4^n \, n^{3n} \, h^* (\alpha_1) \ldots h^* (\alpha_n) \log \frac{B}{h^* (\alpha_n)} 
\end{equation}
and
\begin{equation} \label{eq5}
\log |\Lambda|    
\ge - c_5^n  \, h^* (\alpha_1) \ldots h^* (\alpha_n) \log \frac{B \max\{h^* (\alpha_1), \ldots , h^* (\alpha_{n-1}) \}}{h^* (\alpha_n)}.
\end{equation}
\end{theorem}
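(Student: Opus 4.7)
The plan is to obtain \eqref{eq1}--\eqref{eq3} as direct transcriptions of published Baker-type lower bounds, and then to derive \eqref{eq4} and \eqref{eq5} by specialization to $|b_n|=1$.

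First I would invoke Matveev's theorem \cite{Mat00}, whose natural formulation gives a lower bound of the shape
$$
\log |\Lambda| \ge - C^n \, n^{3n} \, h^*(\alpha_1) \cdots h^*(\alpha_n) \log B',
$$
for some effectively computable $C=C(D)>0$, the parameter $B'$ being precisely the symmetric expression appearing in the statement. This yields \eqref{eq2}. The crude bound $B' \le 2B$ (valid because $h^*(\alpha_j) \ge 1$ for all $j$) then already produces a version of \eqref{eq1}, but with the undesired factor $n^{3n}$. To remove this factor, I would instead cite the estimates of Waldschmidt \cite{Wa93,WaLiv}, which give \eqref{eq1} directly with constant $c_1^n$, and which, reformulated with the parameter $B''$, give \eqref{eq3}.

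Next, assume $|b_n|=1$. Using $h^*(\alpha_j) \ge 1$ throughout, one has
$$
B' \le \max\Bigl\{3,\ 1 + \frac{B}{h^*(\alpha_n)}\Bigr\},
$$
so that, in the non-trivial regime $B \ge h^*(\alpha_n)$, $\log B' \le \log(B/h^*(\alpha_n)) + O(1)$; absorbing the implied constant into $c_4$ deduces \eqref{eq4} from \eqref{eq2}. Similarly, when $|b_n|=1$ the index $j=n$ contributes only the constant $1$ to the maximum defining $B''$, while the remaining indices contribute at most $B\max\{h^*(\alpha_1),\ldots,h^*(\alpha_{n-1})\}/h^*(\alpha_n)$; hence
$$
\log B'' \le \log \frac{B \, \max\{h^*(\alpha_1),\ldots,h^*(\alpha_{n-1})\}}{h^*(\alpha_n)} + O(1),
$$
and \eqref{eq5} follows from \eqref{eq3} after adjusting the constant.

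The main, and essentially only, obstacle is bibliographic rather than mathematical: one must match the exact shape of the constants in the published forms of Matveev's and Waldschmidt's theorems (which typically carry extra powers of $D$ and a factor such as $\log\max\{B,eD\}$ in place of $\log B$) with the clean dependence $c_i^n$, respectively $c_2^n n^{3n}$, advertised here, verifying that all such excess factors can be absorbed into the effectively computable constants $c_1,\ldots,c_5$ without altering their stated dependence on $n$.
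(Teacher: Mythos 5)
Your overall approach --- transcribe the published Baker-type lower bounds and then derive \eqref{eq4} and \eqref{eq5} by specialising to $|b_n|=1$ --- is the same as the paper's: the paper records no independent proof, simply presenting the theorem as a compilation of \cite{Wa93,WaLiv,Mat00} (see also \cite[Theorems 1.1 and 1.2]{Bu18b}). The reductions you carry out for $|b_n|=1$ are correct: indeed $B' \le \max\{3, 1 + B/h^*(\alpha_n)\}$ since $1/h^*(\alpha_j)\le 1$ and $|b_j|\le B$, and the index $j=n$ contributes only $1$ to the maximum defining $B''$, so $B'' \le \max\bigl\{3,\, B\max\{h^*(\alpha_1),\ldots,h^*(\alpha_{n-1})\}/h^*(\alpha_n)\bigr\}$. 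The additive $O(1)$ terms are absorbed into the constants, as you say.

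However, you have the attributions reversed, and in this theorem the attribution is part of the mathematical content. Matveev's breakthrough was precisely the elimination of the factor $n^{O(n)}$: his estimate is the one yielding \eqref{eq3} (and hence \eqref{eq5}) with constant $c_3^n$, and it is formulated with the parameter $B''$. Waldschmidt's earlier estimate is the one carrying the factor $n^{3n}$; it uses $B'$ and gives \eqref{eq2} (hence \eqref{eq4}). The paper says this explicitly right after the theorem (``The improvement in the dependence on $n$ is Matveev's breakthrough \cite{Mat00}'') and again at the end of the proof of Theorem \ref{qqS} (``By using Waldschmidt's estimate \eqref{eq2} in place of Matveev's one \eqref{eq5}\ldots''). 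If one went to \cite{Mat00} looking for a bound of the shape $-C^n n^{3n} h^*(\alpha_1)\cdots h^*(\alpha_n)\log B'$, one would not find it --- the entire point of that paper is that the constant is $C^n$ times only a polynomial in $n$; and conversely Waldschmidt's result does not deliver the clean $c^n$ dependence with $B''$ that you ascribe to it. You should swap the two sources so that \eqref{eq2}, \eqref{eq4} come from Waldschmidt and \eqref{eq1}, \eqref{eq3}, \eqref{eq5} come from Matveev.
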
 

The estimate \eqref{eq1} takes the shape $|\Lambda| \ge B^{-C}$, where 
$C$ depends only on $n, \alpha_1, \ldots , \alpha_n$. However, it does not 
include all the refirements. In particular, it is weaker than \eqref{eq3} in some important cases.

The quantity $B'$ in \eqref{eq2} originates in Feldman's papers \cite{Fe68,Fe71}. It is 
a consequence of the use of the functions $x \mapsto {x \choose k}$ instead of 
$x \mapsto x^k$ in the construction of the auxiliary function. 
The key point is the presence of the factor $h^* (\alpha_n)$ in the denominator in the
definition of $B'$. It is of great interest when $|b_n| = 1$ and $h^* (\alpha_n)$ is
large, since it then allows us, roughly 
speaking, to replace $B$ by the smaller quantity $B / h^* (\alpha_n)$; see \eqref{eq4}. 
The difference between \eqref{eq2} and \eqref{eq3} (and between \eqref{eq4} and \eqref{eq5}) lies mainly in the 
dependence on $n$. The improvement in the dependence on $n$ is Matveev's breakthrough  \cite{Mat00}. 
Observe that $B''$ may be larger than $B'$. This, however, does not cause any trouble in most 
of the applications (but not in all of them). 

The term $B'$ has also its origin in the {\it Sharpenings II} of Baker \cite{Ba73}, where the 
following statement is established (with a different notion of height). 
Throughout this paper, we use $\gg_{a, b, \ldots}$ and $\ll_{a, b, \ldots}$ 
to indicate that the positive numerical constants implied by $\gg$ and $\ll$ are effectively computable and depend 
at most on the parameters $a, b, \ldots$. 

\begin{theorem} \label{Baker73}
Keep the notation of Theorem \ref{lflog}. 
Set $B_n = \max\{3, |b_n|\}$ and $B_0 = \max\{ 3, |b_1|, \ldots , |b_{n-1}|\}$. 
There exists an effectively computable number $C$, depending only on $n, D, \alpha_1, \ldots , \alpha_{n-1}$ such that, 
for any real number $\delta$ with $0 < \delta < 1/2$, we have 
$$
|\Lambda| \ge (\delta / B_n)^{C h^* (\alpha_n)} \, \rme^{- \delta B_0}. 
$$
In particular, if we assume that $n \ge 2$ and $b_n = 1$, and 
if $\eta$ is a real number with $0 < \eta < 1$, then
the inequality 
$$
|\Lambda|  < \rme^{- \eta B}, 
$$
implies the upper bound
$$
B = B_0 \ll_{n, D, \alpha_1, \ldots , \alpha_{n-1}, \eta} \, h^* (\alpha_n). 
$$
\end{theorem}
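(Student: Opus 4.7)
The plan is to deduce Theorem \ref{Baker73} from estimate \eqref{eq2} of Theorem \ref{lflog}, exploiting the presence of $h^*(\alpha_n)$ in the denominator of $B'$ in order to transfer part of the logarithmic dependence on $B_0$ into the linear term $\delta B_0$. Setting
$$
c := c_2^n n^{3n}\, h^*(\alpha_1) \cdots h^*(\alpha_{n-1}),
$$
which depends only on $n, D$ and $\alpha_1, \ldots, \alpha_{n-1}$, the bound \eqref{eq2} reads $\log|\Lambda| \ge -c\, h^*(\alpha_n) \log B'$. Since $h^*(\alpha_j) \ge 1$ for every $j$, the definition of $B'$ yields the crude bound
$$
B' \le 3 + B_n + \frac{B_0}{h^*(\alpha_n)}.
$$

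I then split into two cases. If $B_0 / h^*(\alpha_n) \le B_n/\delta$, then $B' \le 3 B_n/\delta$, and one obtains the desired estimate directly, with no role played by the $\rme^{-\delta B_0}$ factor. Otherwise $B' \le 3 B_0/(\delta\, h^*(\alpha_n))$, and the key technical step is to apply the elementary inequality
$$
\log y \le \alpha y + \log(1/\alpha), \qquad y, \alpha > 0,
$$
with $y := B_0$ and $\alpha := \delta/(2c\, h^*(\alpha_n))$. This converts $-c\,h^*(\alpha_n)\log B_0$ into $-\tfrac{1}{2}\delta B_0 - c\,h^*(\alpha_n)\log(2c\, h^*(\alpha_n)/\delta)$. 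Substituting and using $\log B' \le \log 3 + \log B_0 - \log\delta - \log h^*(\alpha_n)$, the two occurrences of $\log h^*(\alpha_n)$ cancel exactly (one from $\log(1/h^*(\alpha_n))$ inside $\log B'$, the other from the auxiliary inequality), producing
$$
\log|\Lambda| \ge -c\,h^*(\alpha_n)\bigl[\log(6c) + 2\log(1/\delta)\bigr] - \tfrac{1}{2}\delta B_0.
$$
Since $\delta < 1/2$ forces $\log(B_n/\delta) \ge \log 6 > 1$, the bracketed constant can be absorbed into a multiple of $\log(B_n/\delta)$, yielding the bound stated in the theorem for a suitable $C$ depending only on $n, D, \alpha_1, \ldots, \alpha_{n-1}$.

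For the particular case, $|b_n|=1$ forces $B_n=3$ and $B = B_0$ (as $B_0 \ge 3 > 1 = |b_n|$). Substituting the hypothesis $\log|\Lambda| < -\eta B$ into the main estimate gives $\eta B < C\, h^*(\alpha_n)\log(3/\delta) + \delta B$; the choice $\delta := \eta/2$ (legitimate since $\eta < 1$) absorbs $\delta B$ into the left-hand side and leaves $(\eta/2) B \le C\, h^*(\alpha_n)\log(6/\eta)$, which is precisely the stated upper bound.

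The main delicate point is the cancellation of the $h^*(\alpha_n) \log h^*(\alpha_n)$ terms in the second case: it is crucial that the exponent $h^*(\alpha_n)$ multiplying $\log B'$ matches the $h^*(\alpha_n)$ in the denominator of $B'$, because only then does the splitting trick leave no residual dependence of $C$ on $\alpha_n$. This is exactly the feature that the refined bound \eqref{eq2} with $B'$, and not the cruder \eqref{eq1}, provides.
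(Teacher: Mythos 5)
Your proof is correct, and it does genuinely more than the paper does. The paper does not prove the first assertion at all: it is presented as Baker's result from \cite{Ba73} (stated there with a different notion of height), and the paper only explains how the second assertion follows, either from the first by taking $\delta = \eta/2$, or directly from \eqref{eq4} via the displayed chain $B \le \delta^{-1} c_4^n n^{3n} h^*(\alpha_1)\cdots h^*(\alpha_n)\log(B/h^*(\alpha_n))$. You instead derive the first assertion from the Feldman/Waldschmidt-type bound \eqref{eq2} by the elementary inequality $\log y \le \alpha y + \log(1/\alpha)$ applied with $y = B_0$ and $\alpha = \delta/(2c\,h^*(\alpha_n))$; the point you correctly isolate is that the $h^*(\alpha_n)\log h^*(\alpha_n)$ terms cancel exactly because $h^*(\alpha_n)$ appears both as the outer factor multiplying $\log B'$ and in the denominator inside $B'$. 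This is precisely why \eqref{eq2} (with $B'$) rather than \eqref{eq1} (with $B$) is needed, and it makes explicit the historical remark in the paper that Baker's Sharpening II is essentially equivalent to the $B'$-refinement. In short: same conclusion, but you supply a self-contained derivation of the first assertion which the paper outsources to \cite{Ba73}; the derivation of the second assertion (choice $\delta = \eta/2$) coincides with the paper's.

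Two very minor remarks that do not affect correctness: in Case~2 your bound $B' \le 3B_0/(\delta h^*(\alpha_n))$ is weaker than necessary (one can drop the $1/\delta$), but that only changes constants; and one should note $c \ge 1$ (true since $c_2 \ge 1$ and $n^{3n}\ge 1$) so that $\log(6c) > 0$ and the absorption of the constant into a multiple of $\log(B_n/\delta)$ is legitimate.
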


The last statement of Theorem \ref{Baker73} follows from the first one by taking $\delta = \eta / 2$. 

Theorem \ref{Baker73} improved the earlier results to the extent of the elimination of 
a factor $\log h^* (\alpha_n)$ from the bound for $B_0$. 
Note, however, that the dependence on $h^* (\alpha_1), \ldots , h^* (\alpha_{n-1})$ is not specified.

Feldman \cite[Theorem 1]{Fe71} obtained a similar result two years before, under some restrictions on 
$\alpha_n$, which enabled him to get the first effective improvement on Liouville's bound for 
the irrationality exponent of an algebraic number of degree at least $3$, see Theorem \ref{feld} below
(and \cite[p. 118]{Ba72}).

The second assertion of Theorem \ref{Baker73} is implied by \eqref{eq4}. Indeed, under the assumption 
$\log |\Lambda| \le - \delta B$, we get
$$
B \le \delta^{-1} \,  c_4^n \, n^{3n} \, h^* (\alpha_1) \ldots h^* (\alpha_n) \log \frac{B}{h^* (\alpha_n)},
$$
hence an upper bound for $B$ of the form 
$$
B \ll_{n, D, \alpha_1, \ldots , \alpha_{n-1}, \delta} \, h^* (\alpha_n). 
$$
See also \cite{BiBu00} for an alternative proof. 

Theorem \ref{Baker73} is sufficient in most of the situations, but not in all of them. 
Assume for instance that the upper bound for $|\Lambda|$ takes the form 
$\log |\Lambda| \le - \delta B h^* (\alpha_1)$. Combined with \eqref{eq4}, this gives an upper bound for $B$
which is linear in $h^* (\alpha_n)$ and independent of $h^*(\alpha_1)$, 
while Theorem \ref{Baker73} yields an upper bound linear in $h^* (\alpha_n)$, but also 
dependent on $h^* (\alpha_1)$. 
The same example illustrates the difference between \eqref{eq4} and \eqref{eq5}. 
As we will see in Subsection \ref{appsum}, 
we encouter such situations when $\alpha_1$ (or one number among $\alpha_1, \ldots , \alpha_{n-1}$) is unknown. 
Additional explanations are given in \cite[p. 361-362]{WaLiv}.

The term $B'$ has apparently been first introduced in \cite{LMPW87}, but with a slightly different definition. 
It is called $M$ in \cite{Wa93} and $b'$ in \cite{Lau94}. 

There is a non-Archimedean analogue of Baker's classical theory. 
Let $p$ be a prime number. 
For a nonzero rational number $\alpha$, let $\vv_p (\alpha)$ denote the exponent of $p$ in the decomposition of $\alpha$ 
as a product of powers of prime numbers. 
More generally, if $\alpha$ is a nonzero algebraic number in a number field $K$, 
let $\vv_{\pp} (\alpha)$ denote the
exponent of $\pp$ in the decomposition of the
fractional ideal $\alpha O_K$ in a product of prime ideals and set 
$$
\vv_p (\alpha) = {\vv_{\pp} (\alpha) \over e_{\pp}}.
$$
This defines a valuation $\vv_p$ on $K$ which extends 
the $p$-adic valuation $\vv_p$ on $\Q$ normalized in such a way that $\vv_p (p) = 1$. 
We reproduce, in a simplified form, estimates obtained by Yu \cite{Yu07}; see also \cite[Theorems 2.9 and 2.11]{Bu18b}. 

\begin{theorem} \label{Yu} 
Let $n \ge 2$ be an integer. 
Let $p$ be a prime number and $\alpha_1, \ldots, \alpha_n$ 
algebraic numbers in an algebraic number field of degree $D$.
Let $b_1, \ldots , b_n$ denote rational integers such that $\alpha_1^{b_1} \ldots \alpha_n^{b_n}$
is not equal to $1$. 
Set
$$
B  =  \max\{3, |b_1|, \ldots , |b_n| \}.   
$$
There exist positive effectively computable real numbers $c_1,  \ldots , c_7$,  depending only on $D$, such that the 
following holds. 
We have
$$
\vv_p (\alpha_1^{b_1} \ldots \alpha_n^{b_n} - 1) < 
c_1^n \, {p^D} \, h^* (\alpha_1) \ldots h^* (\alpha_n) \log B.
$$
Let $B_n$ be a real number such that
$$
B \ge B_n \ge |b_n|.
$$
Assume that
$$
\vv_p (b_n) \le \vv_p (b_j), \quad j = 1, \ldots , n.
$$
Let $\delta$ be a real number with $0< \delta \le {1 \over 2}$. 
Then, we have
\begin{equation} \label{eqYu}
\vv_p (\alpha_1^{b_1} \ldots \alpha_n^{b_n} - 1)  < c_2^n \, {p^D \over (\log p)^2}   \, 
\max\Bigl\{  h^* (\alpha_1) \cdots h^* (\alpha_n) (\log T), {\delta B \over B_n c_3^n } \Bigr\},
\end{equation}
where
$$
T = {B_n \over \delta} c_4^{n^2} p^{(n+1)D}  h^* (\alpha_1) \cdots h^* (\alpha_{n-1}). 
$$
In particular, 
we get either
\begin{equation} \label{eq6}
B \le  c_5^n \, h^* (\alpha_1) \cdots h^* (\alpha_n) \, B_n 
\end{equation}
or
\begin{equation} \label{eq7}
\vv_p (\alpha_1^{b_1} \ldots \alpha_n^{b_n} - 1)  < c_6^n \, p^D \, 
h^* (\alpha_1) \cdots h^* (\alpha_n) \, \log \max \Bigl\{3, \frac{B}{h^* (\alpha_n)} \Bigr\}.  
\end{equation}
Furthermore, we have
\begin{equation} \label{eq8}
\begin{split}
\vv_p (\alpha_1^{b_1} \ldots \alpha_n^{b_n} - 1)  < c_7^n \, & p^D \, 
h^* (\alpha_1) \cdots  h^* (\alpha_n) \\
&  \log \frac{B \max \{h^* (\alpha_1), \ldots , h^* (\alpha_{n-1}) \} }{h^* (\alpha_n)}.
\end{split}
\end{equation}
\end{theorem}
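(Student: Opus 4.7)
The analytic heart of the theorem—the coarse bound $\vv_p(\alpha_1^{b_1} \ldots \alpha_n^{b_n} - 1) < c_1^n p^D h^*(\alpha_1) \ldots h^*(\alpha_n) \log B$ and the refined estimate \eqref{eqYu}—is the non-Archimedean analogue of the Baker--Feldman--Waldschmidt--Matveev bounds. My plan is to quote both directly from Yu \cite{Yu07}, after matching the normalizations of height and valuation used here. Their proofs lie at the level of the $p$-adic transcendence method: one constructs an auxiliary function whose coefficients are polynomials in the $\alpha_j^{z_j}$, with Feldman's binomial coefficients $\binom{x}{k}$ in place of monomials $x^k$ (the very device responsible for the $h^*(\alpha_n)$-in-the-denominator phenomenon), carries out a $p$-adic extrapolation, and concludes via a zero estimate. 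Reproducing this machinery is beyond the scope of a survey, so I treat \eqref{eqYu} as a black box.

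The consequences \eqref{eq6}--\eqref{eq8} of \eqref{eqYu} amount to elementary bookkeeping. For the dichotomy, set $B_n = \max\{3, |b_n|\}$ and distinguish according to whether $B \le c_5^n B_n h^*(\alpha_1) \ldots h^*(\alpha_n)$, which is already \eqref{eq6}, or not. In the latter case, choose $\delta \in (0,1/2]$ so as to balance the two terms inside the inner maximum of \eqref{eqYu}; the assumption that $B$ is large makes this choice feasible, and the implicit equation $T \log T = c_4^{n^2} p^{(n+1)D} B /(c_3^n h^*(\alpha_n))$ then gives $\log T \le \log(B/h^*(\alpha_n)) + O(n^2 + nD \log p)$. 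Substituting into \eqref{eqYu} and absorbing the $O$-remainder into the geometric prefactor $c_6^n p^D$ yields \eqref{eq7}.

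The bound \eqref{eq8} needs no such dichotomy, because the extra factor $\max_{j<n} h^*(\alpha_j)$ in the numerator provides enough slack. Apply \eqref{eqYu} with $\delta = 1/2$ and $B_n = B$, so that the second term inside the inner maximum reduces to $1/(2 c_3^n)$ and is dominated by the first. Then bound $\log T \le \log B + (n-1) \log \max_{j<n} h^*(\alpha_j) + O(n^2 + nD \log p)$ and absorb the factor $n-1$ into $c_7^n$ (since $(n-1)^{1/n} \to 1$); this yields \eqref{eq8}. The true difficulty is of course not in any of this bookkeeping but in the deep input \eqref{eqYu} itself, for which I rely on Yu \cite{Yu07}.
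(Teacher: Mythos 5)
Your overall strategy—treat the first bound and the refined estimate \eqref{eqYu} as a black box imported from Yu's paper, then derive \eqref{eq6}--\eqref{eq8} by elementary manipulation—matches exactly what the paper does: the author states that the theorem "reproduces, in a simplified form, estimates obtained by Yu," and then immediately after the statement refers out the two derivations, to \cite[p.~20]{Bu18b} for ``\eqref{eq6} or \eqref{eq7}'' and to \cite[p.~57]{EvGy15} for \eqref{eq8}. Your bookkeeping for the dichotomy \eqref{eq6}/\eqref{eq7} is essentially right: balancing the two terms in the inner maximum of \eqref{eqYu} yields the implicit equation $T\log T = c_4^{n^2} p^{(n+1)D} B/(c_3^n h^*(\alpha_n))$ as you write, and when the balancing $\delta$ exceeds $\tfrac12$ one is in the regime of \eqref{eq6}; this is the standard extraction and is correct modulo routine $\log\log$ adjustments.

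However, your derivation of \eqref{eq8} has a genuine gap. Taking $\delta = \tfrac12$ and $B_n = B$ gives $T = 2B\, c_4^{n^2} p^{(n+1)D} h^*(\alpha_1)\cdots h^*(\alpha_{n-1})$, hence
$$
\log T = \log(2B) + \sum_{j<n}\log h^*(\alpha_j) + O\bigl(n^2 + nD\log p\bigr),
$$
which contains \emph{no} $h^*(\alpha_n)$ and is therefore at least $\log B$. But the right-hand side of \eqref{eq8} is $\log\bigl(B\max_{j<n} h^*(\alpha_j)/h^*(\alpha_n)\bigr)$, which can be far smaller than $\log B$ when $h^*(\alpha_n)$ is large. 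For instance, with $n=2$, $h^*(\alpha_1)=1$, and $h^*(\alpha_n)$ comparable to $B$, your bound grows like $\log B$ while the target is $O(1)$; dividing by $(\log p)^2$ and absorbing $n-1$ into $c_7^n$ does nothing to close this gap. The whole point of \eqref{eq8}, like \eqref{eq5} in the Archimedean case, is that the factor $h^*(\alpha_n)$ appears in the denominator inside the logarithm, and this can only come from a balanced choice of $\delta$ (and of $B_n$) as in your own treatment of \eqref{eq7}—not from the trivial choice $\delta=\tfrac12$, $B_n=B$. The actual derivation in \cite[p.~57]{EvGy15} retains the balancing and handles the out-of-range regime with a separate, non-dichotomy argument (roughly: when the balanced $\delta$ would exceed $\tfrac12$, the extra factor $\max_{j<n} h^*(\alpha_j)$ in the numerator provides just enough slack that a crude estimate or the unrefined first bound of the theorem already yields \eqref{eq8}). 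You should replace your $\delta=\tfrac12$, $B_n=B$ step by this balancing-plus-fallback argument.
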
 

The derivation of `\eqref{eq6} or \eqref{eq7}' (\resp of \eqref{eq8}) from \eqref{eqYu} is explained 
on \cite[p. 20]{Bu18b} (\resp on \cite[p. 57]{EvGy15}).

Clearly, \eqref{eq7} is a $p$-adic analogue of \eqref{eq4} and \eqref{eq5}. 
But, this is not the conclusion of Theorem \ref{Yu}: there is an alternative, whose other member 
is \eqref{eq6}. In most of the cases, this is sufficient for our applications, but there are examples 
for which this is not, see Subsection \ref{appsum}. 

In the case of two logarithms, an analogue of \eqref{eq4} has been proved in \cite{BuLa96}, with, however, 
a weaker dependence on $B'$. 

\begin{theorem}    \label{BuLa}
Let $p$ be a prime number. 
Let $\alpha_1$ and $\alpha_2$ be multiplicatively independent algebraic numbers 
with $\vv_p (\alpha_1) = \vv_p (\alpha_2) = 0$.
Set $D = [\Q(\alpha_1, \alpha_2) : \Q]$.
Let $b_1$ and $b_2$ be positive integers and set
$$
B' = \max\Bigl\{ 3,  {b_1 \over h^* (\alpha_2)} + {b_2 \over h^* (\alpha_1)}  \Bigr\}. 
$$
Then, there exists an absolute, 
effectively computable number $c_1$ such that
$$
\vv_p (\alpha_1^{b_1} - \alpha_2^{b_2} ) 
\le  c_1 p^D \,
h^* (\alpha_1) h^* (\alpha_2)   \, (\log B')^2.   
$$
\end{theorem}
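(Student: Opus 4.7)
The plan is to adapt Laurent's method of interpolation determinants, used in the Archimedean setting in \cite{LMPW87,Lau94}, to the $p$-adic situation, and to combine it with Feldman's trick of employing the binomial polynomials $\binom{z}{t}$ in place of the monomials $z^t$ in the auxiliary construction; the latter is what produces the quantity $B'$ rather than $B$ in the final bound. Arguing by contradiction, one assumes that $\vv_p(\alpha_1^{b_1}-\alpha_2^{b_2})$ exceeds the bound in the statement. Since $\vv_p(\alpha_1)=\vv_p(\alpha_2)=0$, this valuation coincides with $\vv_p(\alpha_1^{b_1}\alpha_2^{-b_2}-1)$; after raising $\alpha_1^{b_1}\alpha_2^{-b_2}$ to an integer power so as to enter the disk of convergence of the $p$-adic logarithm, it is essentially equal to $\vv_p(\Lambda)$ with $\Lambda = b_1\log_p\alpha_1 - b_2\log_p\alpha_2$. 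This lower bound on $\vv_p(\Lambda)$ is the analytic input to the transcendence argument.

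Next, choose positive integer parameters $K_1, K_2, L, T$ linked by $L = K_1 K_2 (T+1)$, with $K_i$ roughly inversely proportional to $h^*(\alpha_i)$ (up to a power of $p$) and with both $L$ and $T$ of order $\log B'$. Form the $L\times L$ matrix indexed columnwise by triples $(k_1, k_2, t)$ with $0\le k_i < K_i$ and $0\le t\le T$, and rowwise by integers $\ell = 0, 1, \ldots, L-1$, whose entry at position $(\ell;k_1,k_2,t)$ is
\[
\binom{\ell}{t}\,\alpha_1^{k_1 \ell}\,\alpha_2^{k_2 \ell},
\]
and let $\Delta$ denote its determinant. Using the congruence $\alpha_1^{b_1}\equiv \alpha_2^{b_2}\pmod{\pp^N}$ with $N$ close to the assumed value of $\vv_p(\Lambda)$, a suitable row reduction modulo a large power of $p$ exhibits $\Delta$ as a determinant of an essentially low-rank matrix, forcing $\vv_p(\Delta)$ to be of order $L^2\,\vv_p(\Lambda)/(b_1+b_2)$. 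On the other hand, bounding the house and the common denominator of $\Delta$ in the usual Liouville fashion yields an upper bound for $\vv_p(\Delta)$ of order $D\log p\cdot(LK_1 h^*(\alpha_1)+LK_2 h^*(\alpha_2)+LT\log L)$, provided $\Delta\ne 0$. Comparing these two estimates with the above choice of parameters contradicts the assumed size of $\vv_p(\Lambda)$ unless the desired inequality holds; the exponent $2$ on $\log B'$ emerges as the product $L\cdot T$.

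The main obstacle is the non-vanishing of $\Delta$, which has to be deduced from the multiplicative independence of $\alpha_1$ and $\alpha_2$; one invokes a zero estimate on the algebraic group $\mathbb{G}_m^2$, or equivalently a Laurent-type combinatorial lemma showing linear independence of the columns for admissible parameter choices. Secondary technical points concern the $p$-adic radius of convergence of $\log_p$, the ramification index of $\pp$ in $\Q(\alpha_1,\alpha_2)$, and integrality adjustments on the matrix entries; together these account for the factor $p^D$ in the final bound. The loss of $(\log B')^2$ in place of the $\log B'$ obtained Archimedeanly in \eqref{eq4} reflects the absence of a Matveev-style refinement in this two-logarithm $p$-adic framework and the need to control simultaneously the row index $\ell$ and the derivation order $t$.
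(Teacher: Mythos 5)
The survey does not reproduce a proof of this theorem: it states it and cites \cite{BuLa96}, so the only ``paper's proof'' to compare against is the one in that reference. Your high-level plan does match Bugeaud--Laurent: reduce $\vv_p(\alpha_1^{b_1}-\alpha_2^{b_2})$ to $\vv_p$ of a $p$-adic linear form in two logarithms (after passing to a suitable power to enter the domain of $\log_p$), adapt Laurent's interpolation-determinant method to the ultrametric setting, import Feldman's binomial device to get $B'$ rather than $B$, use a Liouville estimate for the arithmetic side and a zero estimate on $\mathbb{G}_m^2$ (multiplicative independence) for non-vanishing.

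The determinant you actually write down, however, is not Laurent's, and as written the argument cannot close. You index rows by a single integer $\ell$ and take entries $\binom{\ell}{t}\,\alpha_1^{k_1\ell}\alpha_2^{k_2\ell}$. In Laurent's construction the rows are indexed by \emph{pairs} $(r,s)$ in a rectangle, the exponents of the two numbers are decoupled as $\alpha_1^{r k_1}\alpha_2^{s k_2}$, and, crucially, the binomial factor is $\binom{r b_2 + s b_1}{k_0}$. It is precisely through the argument $rb_2+sb_1$ that the integers $b_1,b_2$ enter the height estimate, and after dividing by $k_0!$ this is what produces the quantity $b_1/h^*(\alpha_2)+b_2/h^*(\alpha_1)$ defining $B'$. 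In your matrix $b_1$ and $b_2$ appear nowhere in the entries: the exponents $k_i\ell$ are of size at most $K_iL$, which under your parameter choice is of order $(\log B')^2$ and hence far smaller than $b_1,b_2$. Consequently the congruence $\alpha_1^{b_1}\equiv\alpha_2^{b_2}\pmod{\pp^{N}}$ cannot be brought to bear on $\Delta$ at all; the claimed ``row reduction modulo a large power of $p$ exhibiting $\Delta$ as essentially low-rank'' has no mechanism to occur, and $B'$ has no way to emerge from the arithmetic side. You need the two-variable indexing and the $\binom{r b_2 + s b_1}{k_0}$ structure for both the $p$-adic analytic lower bound on $\vv_p(\Delta)$ and the appearance of $B'$ in the height of the entries.
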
 

We have slightly simplified the statement of Theorem \ref{BuLa}. Its interest is more practical than theoretical: indeed, the 
numerical constant (which is not reproduced here) is much smaller than those in Theorem \ref{Yu}, and this 
is crucial for applications to the resolution of 
Diophantine equations, even in spite of the appearance of $(\log B')^2$. 
In principle, the square over the factor $(\log B')$ can be removed. This has been worked out 
in the Archimedean case by Gouillon \cite{Gou06}. 

Also, in principle, it should be possible to replace the conclusion `we get \eqref{eq6} or \eqref{eq7}' by 
a weaker form of \eqref{eq7}, with an extra factor $n^{2n}$ or $n^{3n}$. 
This is an open problem, which would have some applications (see Subsection \ref{appsum}), even if the 
dependence on $n$ is not very good. 

\begin{problem}
Keep the assumption of Theorem \ref{Yu}. Assume that $|b_n| = 1$. 
Establish that there exists a positive effectively computable real number $c_1$,  depending only on $D$, such that
$$
\vv_p (\alpha_1^{b_1} \ldots \alpha_n^{b_n} - 1)  < c_1^n \, n^{3n} \, p^D \, 
h^* (\alpha_1) \cdots h^* (\alpha_n) \, \log \max \Bigl\{3, \frac{B}{h^* (\alpha_n)} \Bigr\}.  
$$
\end{problem}


The purpose of this paper is to survey several questions, where the presence of $B'$ yields stronger, 
or more general, results than the usual estimate \eqref{eq1}. We discuss five exponents of Diophantine 
approximation in Section \ref{sec:2}. Sections \ref{sec:3} to \ref{sec:5} are devoted to Diophantine equations.
Section \ref{sec:6} contains some new results on arithmetical properties of convergents to real numbers. 

This paper can be seen as a complement to the monograph \cite{Bu18b}, even if several results 
surveyed below were already discussed in \cite{Bu18b}.

\section{Exponents of Diophantine approximation}  \label{sec:2}

In this section, we discuss the values taken by several exponents of Diophantine approximation 
at algebraic arguments. 
In each of the five examples considered below, the replacement of $B$ by $B'$ is crucial to get
a (small) saving on the easy bound derived from 
a Liouville-type argument. In Sections \ref{ssec1} to \ref{ssec4} the proofs need some (classical and elementary) algebraic number theory, 
and we do not reproduce them here. The interested reader is directed to \cite{Bu18b} and/or to the original papers.

\subsection{Irrationality exponent}   \label{ssec1}

The irrationality exponent  
of an irrational number $\theta$
measures the quality of approximation to $\theta$ by rational numbers. 

\begin{definition}     \label{IrrExp}  
Let $\theta$ be an irrational real number. 
We denote by $\mu (\theta)$ (resp.,  $\mueff (\theta)$) the infimum of the real numbers $\mu$ 
for which there exists 
a positive real number $C(\theta)$ (resp.,  a positive, effectively computable, real number $C(\theta)$)
such that every rational number ${p \over q}$
with $q \ge 1$ satisfies 
$$
\Bigl| \theta - {p \over q} \Bigr| > {C(\theta) \over q^{\mu}}.
$$
\end{definition}

Clearly, $\mueff (\theta)$ is always larger than or equal to $\mu (\theta)$, which is always at least equal to $2$, by 
the theory of continued fractions. 

Let $\xi$ be a real, algebraic, irrational number of degree $d$. Liouville's theorem asserts that there exists a 
positive, effectively computable number $C(\xi)$ such that 
\begin{equation}  \label{Liouv}
\Bigl| \xi - {p \over q} \Bigr| > {C(\xi) \over q^d}, \quad \hbox{for every $p, q$ in $\Z$ with $q \ge 1$.} 
\end{equation} 
Thus, we have $\mueff(\xi) \le d$, while $\mu(\xi) = 2$, by 
Roth's theorem. For $d \ge 3$, Liouville's bound for $\mueff(\xi)$ can be slightly improved.  
This was first established by Feldman \cite{Fe71}. 

\begin{theorem}    \label{feld}
Let $\xi$ be a real, algebraic number of degree $d$ at least $3$. 
There exists a positive, effectively computable $\tau = \tau (\xi)$ such that $\mueff(\xi) \le d - \tau$.
\end{theorem}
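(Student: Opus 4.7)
The plan is to apply Baker's theorem in the refined form \eqref{eq4} of Theorem~\ref{lflog}, which features the Feldman denominator $h^*(\alpha_n)$ and is precisely designed to harvest a logarithmic-factor saving beyond Liouville's bound.

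The starting point is the standard Liouville setup: assume $|\xi - p/q| < q^{-\mu}$ for some rational $p/q$ with $q$ large, and let $\alpha := q\xi - p$, viewed as a nonzero algebraic integer of $K = \Q(\xi)$ (after clearing a fixed denominator of $\xi$) with $|\alpha| < q^{1-\mu}$, whose other conjugates $\alpha^{(i)} = q\xi_i - p$ satisfy $|\alpha^{(i)}| \le c_\xi q$ for $i = 2, \ldots, d$. Since $|N_{K/\Q}(\alpha)| \ge 1$, this gives $\mueff(\xi) \le d$. To improve on this, note that for $d \ge 3$ Dirichlet's theorem supplies a fundamental system of units $\eta_1, \ldots, \eta_r$ of $O_K$ with $r = r_1 + r_2 - 1 \ge 1$, and, combined with a finite list of ideal-class representatives, a factorization
\[
\alpha = \zeta \gamma \eta_1^{a_1} \cdots \eta_r^{a_r}
\]
in $K$, where $\zeta$ is a root of unity, $\gamma$ is drawn from a finite $\xi$-dependent set, and the exponents $a_j \in \Z$ satisfy $\max_j |a_j| \ll_\xi \log q$ via the regulator bound applied to $(\log |\alpha^{(i)}|)_i$.

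The core of the argument is then to assemble, from this factorization together with an auxiliary ingredient --- such as another rational approximation to $\xi$, or a second embedding $\sigma : K \hookrightarrow \C$ --- a nonzero linear form
\[
\Lambda = b_0 \log \beta_0 + b_1 \log \beta_1 + \cdots + b_n \log \beta_n
\]
whose upper bound from the size of $\alpha$ reads $\log|\Lambda| \le -(\mu - 1)\log q + O_\xi(\log q)$, and which is engineered so that the distinguished algebraic number $\beta_n$ (playing the role of $\alpha_n$ in \eqref{eq4}) has height $\asymp \log q$ while its coefficient $b_n$ equals $\pm 1$. The bound \eqref{eq4} then produces $\log|\Lambda| \ge - C_\xi (\log q) \log(B/\log q)$ with $B \ll_\xi \log q$, of absolute value $O_\xi(\log q \cdot \log\log q)$ --- smaller by a substantial factor than the upper bound. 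Comparing the two forces $\mu \le d - \tau(\xi)$ for an effective $\tau(\xi) > 0$.

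The main obstacle lies precisely in this engineering step: one needs the distinguished algebraic number to have height $\asymp \log q$, its coefficient to be $\pm 1$, and the form to be genuinely nonzero, all simultaneously. Without all three conditions, \eqref{eq4} degenerates into the weaker \eqref{eq1}, which does not beat Liouville. Feldman's insight (and the reason \eqref{eq4} is the right tool) is that for an algebraic $\xi$, the natural candidate for the distinguished algebraic number --- a quantity like $q\xi - p$ or a Thue--Siegel-type auxiliary element --- has height on the order of $\log q$, exactly matching the growth of $B$, so the Feldman denominator kicks in and converts Baker's logarithm-loss into the effective improvement asserted by Theorem~\ref{feld}.
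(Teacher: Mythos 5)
The paper does not actually prove Theorem~\ref{feld}; it cites Feldman \cite{Fe71} and Bilu--Bugeaud \cite{BiBu00} and only explains the role of $B'$. So I am evaluating your sketch against the standard argument, not against a written-out proof in this paper.

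Your sketch identifies the right ingredients (unit/ideal-class factorization of $\alpha = q\xi - p$, a Siegel-type identity via a second embedding, estimate \eqref{eq4}), but there is a genuine gap: your claimed asymptotics are internally inconsistent and do not close the argument. You assert $h^*(\beta_n)\asymp\log q$, $B\ll_\xi\log q$, and deduce a lower bound of absolute value $O_\xi(\log q\cdot\log\log q)$. But with $B\ll\log q$ and $h^*(\beta_n)\asymp\log q$, the quotient $B/h^*(\beta_n)$ is $O(1)$, so $\log(B/h^*(\beta_n))=O(1)$ and \eqref{eq4} would give $\log|\Lambda|\ge -O_\xi(\log q)$; combined with your upper bound $\log|\Lambda|\le -(\mu-1)\log q + O_\xi(\log q)$, this would bound $\mu$ by an \emph{absolute} constant --- an effective Roth theorem, which is absurd, so the heights cannot be as you claim. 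Conversely, if one takes your stated $O(\log q\cdot\log\log q)$ at face value, comparison with the upper bound gives only $\mu\ll\log\log q$, which is unbounded in $q$ and does not yield $\mueff(\xi)\le d-\tau$.

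The missing idea is that the distinguished algebraic number is \emph{not} $\alpha$ itself (nor a conjugate ratio $\alpha^{(2)}/\alpha^{(3)}$ kept intact), and it is also \emph{not} drawn from a finite set as you write. The principal ideal $(\alpha)$ has norm $|N(\alpha)|\asymp q^{d-\mu}$, so its chosen ideal-class representative $\gamma$ has height $\asymp(d-\mu)\log q$. Writing $\eta=d-\mu$, the linear form has $\alpha_n$ of height $\asymp\eta\log q$ with coefficient $1$, other heights bounded, and $B\ll\log q$; then \eqref{eq4} gives $\log|\Lambda|\ge -c_\xi\,\eta(\log q)\log(1/\eta)$, while the Siegel identity with $|\alpha|<q^{1-\mu}$ and $|\alpha^{(i)}|\asymp_\xi q$ for $i\ne 1$ gives $\log|\Lambda|\le -\mu\log q + O_\xi(1)$. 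Comparing yields $\mu\le c_\xi\,\eta\log(1/\eta)+o(1)$, and since the right-hand side tends to $0$ as $\eta\to 0$ while $\mu\ge 2$, one obtains $\eta\ge\tau(\xi)>0$. This interplay between the norm deficit $\eta=d-\mu$ (entering both the height of $\gamma$ and the argument of the logarithm) and the Feldman denominator is exactly where \eqref{eq4} beats \eqref{eq1}, and it is absent from your write-up; it is also the content of the "engineering step" you flag as the main obstacle but do not resolve.
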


The term $B'$ is at the source of the proof that $\mueff(\xi) < d$. 
By using the lower bound \eqref{eq1} in place of \eqref{eq2}, we get an estimate of the form
$$
\Bigl| \xi - \frac{p}{q} \Bigr| >  \frac{1}{q^{d - c / \log \log q}},
$$ 
for some effectively computable, positive $c$ and every $q \ge 10$. This effective improvement of Liouville's inequality \eqref{Liouv} 
does not imply anything better than $\mueff(\xi) \le d$. 



In some special cases, for instance for $\xi = \root{3} \of{2}$, Theorem \ref{feld} can be considerably improved; see 
\cite{Ba64} and subsequent works. 
Furthermore, Bombieri and Mueller \cite{BoMu83} and 
Bombieri, van der Poorten, and Vaaler \cite{BovdPVa96} proved that, for every $\eps > 0$ and every integer $d \ge 3$, there exist
algebraic real numbers $\xi$ of degree $d$ such that $\mueff (\xi) < 2 + \eps$. 
In \cite{BoMu83} the authors consider integral roots of rational numbers close to $1$, while in \cite{BovdPVa96} the 
algebraic numbers are roots of cubic polynomials $x^3 + p x + q$, where $p > 0$ and $q$ are relatively prime integers, with 
$p$ sufficiently large in terms of $|q|$.

\subsection{On the $b$-ary expansion of an algebraic number}   \label{ssec2}

Let $\theta$ be an irrational real number. 
We denote by $v_b (\theta)$ the infimum
of the real numbers $v$ for which the inequality
$$
\| b^n \theta \| > (b^n)^{-v}
$$
holds for every sufficiently large positive integer $n$.
Likewise, $\vbeff (\theta)$ denotes the infimum of the real numbers $v$
for which there exists an effectively computable
integer $n_0 (v)$ such that
$$
\| b^n \theta \| >   (b^n)^{-v},
$$
for $n \ge n_0(v)$. 

Assume that $\xi$ is algebraic of degree $d \ge 2$. 
Then Liouville's inequality yields the trivial upper bound 
$$
\vbeff (\xi) \le \mueff(\xi) - 1 \le d-1, 
$$ 
while Ridout's theorem \cite{Rid57} implies that $v_b (\xi) = 0$. 
Furthermore, if $d \ge 3$, then it follows from Theorem \ref{feld} that 
\begin{equation} \label{r7}
\vbeff (\xi) \le d-1-\tau,   
\end{equation}
for some effectively computable positive real number $\tau = \tau(\xi)$.    
For $d = 2$, the theory of $p$-adic linear forms in logarithms allows us 
to improve the trivial estimate $\vbeff (\xi) \le 1$ as follows.

\begin{theorem} \label{th:vb}
For every integer $b \ge 2$ and every 
quadratic real number $\xi$, we have
$$
\vbeff (\xi) \le 1 - \tau, 
$$
where $\tau = \tau(\xi, S)$ is a positive, effectively computable, constant
depending only on $\xi$ and on the set $S$ 
of prime factors of $b$. 
\end{theorem}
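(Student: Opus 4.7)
The plan is to argue by contradiction: assume $|p - b^m \xi| \le b^{-m(1-\tau)}$ for some large $m$ and some integer $p$, and deduce from this both an upper bound and (via the Feldman refinement \eqref{eq4} of Baker's estimate) a lower bound on a suitable linear form in logarithms. The two will be incompatible once $\tau$ is smaller than an explicit $\tau_0$ depending only on $\xi$ and $S$. Concretely, let $AX^2+BX+C \in \Z[X]$ be the minimal polynomial of $\xi$, with $A>0$, and set $\Delta := B^2 - 4AC > 0$ and $K := \Q(\sqrt{\Delta})$. The identity $A(p - b^m\xi)(p - b^m\bar\xi) = Ap^2 + Bb^m p + Cb^{2m}$, together with the trivial bound $|p - b^m\bar\xi| \le C_0(\xi)\,b^m$, shows that the nonzero integer $N_m := Ap^2 + Bb^m p + Cb^{2m}$ satisfies $|N_m| \le C_1(\xi)\, b^{m\tau}$. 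Completing the square, $(2Ap+Bb^m)^2 - \Delta b^{2m} = 4A N_m$, so $\omega_m := (2Ap+Bb^m) + b^m\sqrt{\Delta} \in \mathcal{O}_K$ has norm $4AN_m$ of absolute value at most $C_2(\xi)\, b^{m\tau}$.

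Next, let $\varepsilon>1$ be the fundamental unit of $\mathcal{O}_K$. Multiplying $\omega_m$ by an appropriate integer power of $\varepsilon^{-1}$, I obtain $\omega_m = \omega_m^* \varepsilon^{k_m}$ with $|\omega_m^*|,\, |\bar\omega_m^*| \le C_3(\xi)\, b^{m\tau/2}$ and $|\omega_m^*|$ bounded below by a positive constant depending on $\xi$; hence $h^*(\omega_m^*) \le \tfrac12 m\tau\log b + O_\xi(1)$. Taking $\xi$ to be the larger root (without loss of generality), $|\omega_m|$ is of the same order as $b^m$, so $k_m\log\varepsilon = m\log b + O_\xi(m\tau\log b + 1)$. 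The identity $\omega_m - \bar\omega_m = 2 b^m \sqrt{\Delta}$, divided by $\omega_m^*\varepsilon^{k_m}$, produces the key approximation
\begin{equation*}
\Bigl|\, 1 - \frac{2 b^m \sqrt{\Delta}}{\omega_m^* \varepsilon^{k_m}} \,\Bigr| \,=\, \Bigl|\, \frac{\bar\omega_m^*\, \bar\varepsilon^{k_m}}{\omega_m^* \varepsilon^{k_m}} \,\Bigr| \,\le\, C_4(\xi)\, \frac{b^{m\tau}}{\varepsilon^{2 k_m}}.
\end{equation*}
Factoring $b = \prod_{q\in S} q^{v_q(b)}$ and setting
\begin{equation*}
\Lambda_m \,:=\, \log(2\sqrt{\Delta}) + m\sum_{q\in S} v_q(b)\log q - k_m\log\varepsilon - \log\omega_m^*,
\end{equation*}
I view $\Lambda_m$ as a linear form in $|S|+3$ logarithms of algebraic numbers, with integer coefficients and coefficient $-1$ on $\log\omega_m^*$.

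I now apply \eqref{eq4} to $\Lambda_m$, choosing $\alpha_n := \omega_m^*$ (so that $|b_n|=1$ and $h^*(\alpha_n) \le \tfrac12 m\tau\log b + O_\xi(1)$), while the heights of the other $|S|+2$ algebraic numbers depend only on $\xi$ and $S$. The maximum coefficient is $B = O_\xi(m\log b)$, hence $B/h^*(\alpha_n) = O(1/\tau)$ and $\log(B/h^*(\alpha_n)) \le \log(1/\tau) + O(1)$. Estimate \eqref{eq4} yields
\begin{equation*}
\log|\Lambda_m| \,\ge\, -\,C_5(\xi,S)\, m\tau\log b \,\cdot\, \log(1/\tau).
\end{equation*}
On the other hand, from the key approximation and the relation $k_m\log\varepsilon = m\log b + O_\xi(m\tau\log b + 1)$, we get $\log|\Lambda_m| \le m\tau\log b - 2k_m\log\varepsilon + O_\xi(1) \le -(2-O(\tau))\,m\log b + O_\xi(1)$. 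Combining the two estimates and dividing by $m\log b$---crucially, $\log b$ appears linearly on both sides and cancels---we reach $2 - O(\tau) \le C_6(\xi,S)\,\tau\log(1/\tau) + o(1)$ as $m\to\infty$. Since the right-hand side tends to $0$ as $\tau\to 0^+$, this is impossible for $\tau$ smaller than an explicit $\tau_0(\xi,S) > 0$, which establishes $\vbeff(\xi) \le 1 - \tau_0$.

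The crux of the proof---and the main obstacle---is calibrating the unit reduction so that $h^*(\omega_m^*)$ really is of order $m\tau\log b$ (and not merely $O(m\log b)$). Only then does the Feldman replacement of $\log B$ by $\log(B/h^*(\alpha_n))$ in \eqref{eq4} cut the Baker bound from $\sim m\tau\log b\cdot\log m$---which would yield no saving over Liouville---down to $\sim m\tau\log b\cdot\log(1/\tau)$, making the cancellation of $\log b$ possible and ensuring that $\tau_0$ depends only on $\xi$ and $S$ rather than on $b$ itself.
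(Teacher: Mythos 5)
Your proof is correct in substance, but it takes a genuinely different route from the one the paper has in mind. The sentence immediately preceding Theorem \ref{th:vb} says that the bound on $\vbeff$ for a real quadratic $\xi$ is obtained via \emph{$p$-adic} linear forms in logarithms (with details in \cite{Bu18b}); the parallel statement Theorem \ref{th:tmu} on $p$-adic quadratics is then proved with \emph{Archimedean} estimates. Your argument reverses this: you work entirely in the Archimedean setting, producing from a good approximation $|p-b^m\xi|\le b^{-m(1-\tau)}$ the small-norm algebraic integer $\omega_m=(2Ap+Bb^m)+b^m\sqrt\Delta$, balancing it by a power of the fundamental unit so that $\omega_m^*$ has height $\ll m\tau\log b$, and then feeding the resulting linear form into \eqref{eq4} with $\omega_m^*$ placed in the ``exponent $-1$'' slot where the $B'$-refinement bites. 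This is essentially a Baker--Feldman style argument and gives a $\tau_0(\xi,S)$ independent of $b$ because $\log b$ really does appear linearly on both sides and cancels, as you point out. The $p$-adic route presumably exploits the continued-fraction recurrence for $q_k$ and a bound for $\vv_\ell$ at a prime $\ell\mid b$; your version replaces that structure with the norm form plus the unit lattice of $\Q(\sqrt\Delta)$. One small imprecision to tidy up: the intermediate claim ``$B/h^*(\alpha_n)=O(1/\tau)$'' is not literally justified, since you only prove the upper bound $h^*(\omega_m^*)\le\tfrac12 m\tau\log b+O_\xi(1)$ and $h^*(\omega_m^*)$ could a priori be much smaller. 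What saves the argument is that $H\mapsto H\log\max\{3,B/H\}$ is nondecreasing, so $h^*(\omega_m^*)\log(B/h^*(\omega_m^*))\le H_0\log(B/H_0)$ with $H_0:=\tfrac12 m\tau\log b+O_\xi(1)$, and this is what is $\ll_{\xi,S} m\tau\log b\cdot\log(1/\tau)$. With that word of care (and noting $\omega_m>0$ so that $\log\omega_m^*$ is well defined), the contradiction $2-O(\tau)\le C_6(\xi,S)\,\tau\log(1/\tau)$ is solid, and your $\tau_0$ is effectively computable in terms of $\xi$ and $S$ only.
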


In some specific cases, we can get better upper bounds and even, for every $\eps > 0$, construct positive integers $m$ such that 
$\vbeff (\sqrt{m} ) < \eps$. 
Namely, it has been proved in \cite{BeBu12} that for every integers $b \ge 2$ and $k \ge 1$ we have
$$
\vbeff (\sqrt{b^{2k} + 1} ) \le \frac{\log 48}{k \log b}.
$$

\subsection{Simultaneous Pell equations}   \label{ssec3}

The definition of (effective) irrationality exponent extends as follows. 
Let $\theta, \theta'$ be real numbers such that $1, \theta, \theta'$ are linearly independent 
over the rational numbers. We denote by $\mu (\theta, \theta')$ (\resp  $\mueff (\theta, \theta')$) 
the supremum of the real numbers $\mu$ 
for which there exists 
a positive real number $c(\theta, \theta')$ 
(\resp a positive, effectively computable, real number $c(\theta, \theta')$) 
such that, for every integer triple $(p, q, r)$
with $q \ge 1$, we have 
$$
\max \Bigl\{ \Bigl| \theta - {p \over q} \Bigr|,  \Bigl| \theta' - {r \over q} \Bigr|  \Bigr\} > 
{c(\theta, \theta') \over q^{\mu}}.
$$

Let $\theta, \theta'$ be real numbers such that $1, \theta, \theta'$ are linearly independent 
over the rational numbers. An easy application of Minkowski's theorem implies 
that $\mu (\theta, \theta') \ge {3 \over 2}$ and a covering lemma shows that equality holds 
for almost all pairs $(\theta, \theta')$, with respect to the planar Lebesgue measure. 
Schmidt \cite{Schm67} established 
that $\mu (\xi, \zeta) = {3 \over 2}$ if $\xi$ and $\zeta$ are both real and algebraic. 
His result is ineffective and gives no better information on 
$\mueff (\xi, \zeta)$ than the obvious inequality 
$$
\mueff (\xi, \zeta) \le \max\{ \mueff(\xi), \mueff(\zeta) \}.
$$
The particular case where $\xi$ and $\zeta$ are quadratic numbers in distinct 
number fields is of special interest and was considered in \cite{Bu20}.

\begin{theorem}
Let $\xi, \zeta$ be real quadratic numbers in distinct quadratic fields. 
Then, we have $\mu (\xi, \zeta) = 3/2$ and
there exists a positive, effectively computable real number $\tau$, depending only on $\xi$ and $\zeta$, such that 
$$
\mueff (\xi, \zeta) \le 2 - \tau. 
$$
\end{theorem}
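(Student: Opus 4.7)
The first assertion $\mu(\xi,\zeta) = 3/2$ is an immediate consequence of Schmidt's theorem recalled above. For the effective bound, the plan is to argue by contradiction, reduce to a three-term linear form in logarithms, and apply the $B'$-refinement \eqref{eq4}. Assume there exist arbitrarily large $q$ and integer pairs $(p,r)$ with $\max\{|q\xi - p|, |q\zeta - r|\} \le q^{-(1-\tau)}$ for some $\tau > 0$ to be determined. After replacing $\xi, \zeta$ by suitable integer multiples, we may assume $\xi, \zeta$ are algebraic integers in the rings of integers $\mathcal{O}_1, \mathcal{O}_2$ of the distinct real quadratic fields $\Q(\xi), \Q(\zeta)$; let $\epsilon_1, \epsilon_2$ be fundamental units of modulus $>1$. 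Setting $\eta_1 := p - q\xi \in \mathcal{O}_1$ and $\eta_2 := r - q\zeta \in \mathcal{O}_2$, the trivial bound $|\eta_i'| \ll q$ on the Galois conjugates yields $|N(\eta_i)| \ll_{\xi,\zeta} q^{\tau}$. Dirichlet's unit theorem then lets us write $\eta_i = \alpha_i \epsilon_i^{n_i}$ with $\alpha_i$ normalized so that $|\alpha_i|$ lies in a fixed fundamental interval; this forces $|\alpha_i| = O(1)$, $|\alpha_i'| = |N(\alpha_i)/\alpha_i| \ll q^{\tau}$, and therefore $h^*(\alpha_i) \ll_{\xi,\zeta} \tau \log q$.

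Equating the two expressions $q(\xi' - \xi) = \eta_1 - \eta_1'$ and $q(\zeta' - \zeta) = \eta_2 - \eta_2'$ and isolating the dominant $\eta_i'$ terms (of modulus $\asymp q$) from the negligible $\eta_i$ (of modulus $\ll q^{-(1-\tau)}$) gives
\[
(\zeta-\zeta')\alpha_1' (\epsilon_1')^{n_1} - (\xi-\xi')\alpha_2' (\epsilon_2')^{n_2} \ll_{\xi,\zeta} q^{-(1-\tau)}.
\]
Dividing by $(\xi-\xi')\alpha_2'(\epsilon_2')^{n_2}$ and taking logarithms produces the three-term linear form in logarithms of algebraic numbers
\[
\Lambda := n_1 \log|\epsilon_1'| - n_2 \log|\epsilon_2'| + \log\Bigl|\tfrac{(\zeta-\zeta')\alpha_1'}{(\xi-\xi')\alpha_2'}\Bigr|, \qquad |\Lambda| \ll q^{-(2-\tau)}.
\]
The hypothesis that $\Q(\xi)$ and $\Q(\zeta)$ are distinct is needed precisely to ensure $\Lambda \ne 0$: otherwise, matching rational and irrational parts in the exact equality $(\zeta-\zeta')\eta_1 = (\xi-\xi')\eta_2$ forces $p = qa_1$ and $r = qa_2$, where $a_1, a_2$ denote the rational parts of $\xi, \zeta$; but then $|q\xi - p|$ is of order $q$, contradicting the hypothesis.

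Now apply \eqref{eq4} to $\Lambda$ with $n = 3$, $|b_3|=1$, $h^*(\epsilon_i') = O_{\xi,\zeta}(1)$, $h^*(\gamma) \ll \tau \log q$ for the third algebraic number $\gamma$, and $B \asymp \log q$. The product of the heights is $\ll \tau \log q$ and $B/h^*(\gamma) \asymp 1/\tau$, so \eqref{eq4} delivers
\[
\log|\Lambda| \ge -C_1 \, \tau \log q \cdot \log(1/\tau),
\]
with $C_1 > 0$ effectively computable in terms of $\xi, \zeta$. Combining with the upper bound $\log|\Lambda| \le -(2-\tau)\log q + O(1)$ yields $2 - \tau \le C_1 \tau \log(1/\tau) + o(1)$, which fails once $\tau$ falls below some effectively computable positive threshold $\tau_0(\xi,\zeta)$. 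Hence no such approximations exist for $q$ sufficiently large, proving $\mueff(\xi,\zeta) \le 2 - \tau_0$. The main technical hurdle is the careful bookkeeping of the heights of the algebraic integers $\alpha_i$ in order to control $h^*(\gamma)$ uniformly; the entire argument hinges on the ratio $B/h^*(\gamma) \asymp 1/\tau$ being bounded in $q$, so that replacing the $B'$-refinement \eqref{eq4} by the cruder \eqref{eq1} would swap the crucial factor $\log(1/\tau)$ for $\log\log q$ and destroy the contradiction.
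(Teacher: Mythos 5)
Your argument is essentially the right one and matches the approach the paper alludes to (the paper itself cites \cite{Bu20} rather than reproducing the proof, but the method you outline — passing to norms, using Dirichlet's unit theorem to write $\eta_i = \alpha_i \epsilon_i^{n_i}$ with $|\alpha_i|$ in a fundamental interval, and then applying the $B'$-refinement to a linear form in three logarithms in which the coefficient of the ``small but nontrivially tall'' number $\gamma$ is $\pm 1$ — is exactly the mechanism that makes $B'$ indispensable here). The bookkeeping $h^*(\alpha_i) \ll \max\{1,\tau\log q\}$, the monotonicity of $x\mapsto x\log(B/x)$ on $[1, B/\rme]$ which justifies replacing $h^*(\gamma)$ by its worst-case value $\asymp\tau\log q$ inside \eqref{eq4}, and the final comparison $2-\tau \le C_1\tau\log(1/\tau)+o(1)$ are all correct.

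Two small points worth tightening. First, the nonvanishing of $\Lambda$: since $\Lambda$ is built from $\log|\cdot|$ terms, $\Lambda=0$ a priori only gives $|X|=1$; you should note that $X$ is a real number forced to be close to $1$ by the upper bound $|X-1|\ll q^{-(2-\tau)}$, hence $X=1$, and then you also need the exact identity $(\zeta-\zeta')(\eta_1-\eta_1')=(\xi-\xi')(\eta_2-\eta_2')$ to transfer the equality from the conjugates $\eta_i'$ (which is what $X=1$ gives directly) to the $\eta_i$ themselves, before comparing coordinates in the basis $\{1,\sqrt{d_1},\sqrt{d_2},\sqrt{d_1d_2}\}$ of $\Q(\xi,\zeta)$. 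Second, since $h^*(\gamma)\ge 1$ always, the bound $B/h^*(\gamma)\asymp 1/\tau$ should be read as an upper bound on $h^*(\gamma)\log(B/h^*(\gamma))$ via monotonicity, which you invoke implicitly; it is worth spelling out so the reader sees that the estimate does not break down when $h^*(\gamma)$ happens to be as small as $1$.
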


Better upper bounds 
have been obtained in some special cases, 
in particular by Rickert \cite{Ri93} (see his paper for earlier references), who 
established among other results that 
$$
\mueff (\sqrt 2, \sqrt 3) \le 1.913, 
$$
and subsequently by Bennett \cite{Be95,Be96}. 
Their method applies only to a restricted 
class of pairs $(\xi, \zeta)$ of quadratic numbers. 
However, it is strong enough to establish that, for every $\eps > 0$ 
and every positive integer $N$ sufficiently large in terms of $\eps$, we have 
$$
\mueff \Bigl( \sqrt{ 1 - \frac{1}{N}},  \sqrt{ 1 + \frac{1}{N}} \, \Bigr) < \frac{3}{2} + \eps, 
$$
see \cite{Ri93}.

\subsection{Multiplicative $p$-adic approximation}   \label{ssec4}

Let $p$ be a prime number and $\theta$ be an irrational, $p$-adic number. 
We denote by $\tmu (\theta)$ the infimum
of the real numbers $\mu$ for which the inequality
\beq  \label{mudef}
| b \theta - a |_p  > |ab|^{-\mu}
\eeq 
holds for every nonzero integers $a, b$ with $|ab|$ sufficiently large. 
Likewise, $\tmueff (\theta)$ denotes the infimum of the real numbers $\mu$
for which there exists an effectively computable
integer $A(\mu)$ such that \eqref{mudef}
holds for every integers $a, b$ with $|a b| \ge A(\mu)$. 

Assume that $\alpha$ is a $p$-adic algebraic number of degree $d \ge 2$. 
Then Liouville's inequality yields the trivial effective lower bound 
$$
|b \alpha - a |_p \gg_\alpha  \max\{|a|, |b|\}^{-d}, \quad \hbox{for $a, b$ in $\Z_{\not= 0}$}, 
$$
thus, since $\max\{|a|, |b|\} \le |ab|$, we have
$$
\tmueff (\alpha) \le  \mueff(\alpha) \le d, 
$$ 
while Ridout's theorem \cite{Rid58} implies that $\tmu (\alpha) = 1$. 
Furthermore, if $d \ge 3$, then the analogue of Theorem \ref{feld} holds (see \cite[Section V.2]{Spr93}), namely  we have
\begin{equation} \label{feldpadic}
\tmueff (\alpha) \le d - \tau,   
\end{equation}
for some effectively computable positive real number $\tau = \tau(\alpha)$.    
For $d = 2$, the theory of Archimedean linear forms in logarithms allows us 
to improve the trivial estimate $\tmueff (\alpha) \le 2$ as follows; see \cite{Bu22b}. 

\begin{theorem} \label{th:tmu}
For every prime number $p$ and every 
$p$-adic quadratic number $\alpha$, 
there exists a positive, effectively computable real number $\tau = \tau (\alpha)$ such that 
$$
\tmueff (\alpha) \le 2 - \tau. 
$$
\end{theorem}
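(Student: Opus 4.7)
The plan is to invoke the sharpened Baker bound \eqref{eq4} of Theorem \ref{lflog} (the $|b_n|=1$ improvement, in which $B$ is replaced by $B/h^*(\alpha_n)$) applied to an Archimedean linear form in logarithms living inside the quadratic field $K=\Q(\alpha)$. Since $\alpha\in\Q_p$ is quadratic over $\Q$, its minimal polynomial splits over $\Q_p$, so $p$ splits in $K$; let $\pp,\pp'$ be the two primes above $p$, let $\sigma$ be the nontrivial automorphism of $K/\Q$, and set $\alpha':=\sigma(\alpha)$. The first step is to split the analysis according to the balance of $a$ and $b$: if $\min(|a|,|b|)\ge\max(|a|,|b|)^{\delta}$ for a suitable fixed $\delta>0$, Liouville's estimate $|b\alpha-a|_p\gg \max(|a|,|b|)^{-2}$ already yields $|b\alpha-a|_p\gg|ab|^{-2/(1+\delta)}$ and the desired bound is immediate. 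The genuine work is in the regime $|a|\le|b|^{\delta}=:M^\delta$ with $|b|=M$ large (the symmetric case is analogous).

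In this regime, set $V:=\vv_p(b\alpha-a)$; since $V$ is large, the ultrametric inequality forces $\vv_p(b\alpha'-a)=V_0:=\vv_p(b)+\vv_p(\alpha-\alpha')$, which is essentially bounded. Factor the principal ideal $(b\alpha-a)\mathcal{O}_K=\pp^V\pp'^{V_0}\mathfrak b$ with $\mathfrak b$ coprime to $p$ and $N(\mathfrak b)\ll M^{2-\mu'}$, where $\mu':=V\log p/\log M$. After raising to the $h_K$-th power to trivialize the class of $\mathfrak b$, we write $\mathfrak b^{h_K}=(\gamma)$ and choose $\gamma$ within its $\mathcal{O}_K^*$-orbit so that $|\gamma|\asymp|\gamma'|\asymp\sqrt{N(\mathfrak b^{h_K})}$; this gives $h^*(\gamma/\gamma')\asymp(2-\mu')\log M$. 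With $\epsilon$ a fundamental unit and $\pi,\pi'$ generators of $\pp,\pp'$, we have $(b\alpha-a)^{h_K}=\pm\epsilon^n\pi^V(\pi')^{V_0}\gamma$. Using the archimedean identity $(b\alpha-a)/(b\alpha'-a)=\alpha/\alpha'+O(|a|/|b|)$ and taking logarithms of absolute values produces the small linear form
\begin{equation*}
\Lambda:=n\log|\epsilon/\epsilon'|+(V-V_0)\log|\pi/\pi'|+\log|\gamma/\gamma'|-h_K\log|\alpha/\alpha'|,\qquad |\Lambda|\ll M^{\delta-1}.
\end{equation*}
Applying \eqref{eq4} with $\alpha_n:=\gamma/\gamma'$ (its coefficient is $1$), and using $B\ll\log M$ together with $h^*(\gamma/\gamma')\asymp(2-\mu')\log M$, one finds $B/h^*(\alpha_n)\asymp 1/(2-\mu')$, so Baker's lower bound reads $\log|\Lambda|\ge -C(2-\mu')\log M\cdot\log\bigl(1/(2-\mu')\bigr)$. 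Comparing with the upper bound on $|\Lambda|$ and setting $\eta:=2-\mu'$ gives $1-\delta\le C\eta\log(1/\eta)+o(1)$, which forces $\eta\ge\eta_0$ for an effectively computable $\eta_0=\eta_0(\alpha)>0$. Hence $\mu\le\mu'\le 2-\eta_0$, and optimizing $\delta$ to balance the two cases produces the required $\tau=\tau(\alpha)>0$.

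The principal obstacle will be the precise height bookkeeping for $\gamma/\gamma'$: the $B'$-improvement only pays off if $h^*(\gamma/\gamma')$ is of order $(2-\mu')\log M$ rather than of order $\log M$, which in turn requires balancing $\gamma$ within its $\mathcal{O}_K^*$-orbit and carefully tracking the archimedean contribution to the Weil height. Without this balancing the factor $\log\bigl(B/h^*(\alpha_n)\bigr)$ degrades back to $\log\log M$ and one only recovers Liouville's $\tmueff(\alpha)\le 2$. A subsidiary but routine technical point is uniformizing the argument over the finitely many ideal-class representatives of $\mathfrak b$, and handling the imaginary quadratic case, in which $|\gamma|=|\gamma'|$ at the unique archimedean place and the absolute-value linear form vanishes identically, so that $\Lambda$ must be replaced by a linear form in complex logarithms (including a $2\pi i$ term) before Baker's theorem applies.
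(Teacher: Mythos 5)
Your proposal is correct and follows the approach that the paper indicates (the paper states the result and defers to the preprint \cite{Bu22b}, noting only that it uses Archimedean linear forms in logarithms together with elementary algebraic number theory, which is exactly your strategy). The essential ingredients — splitting $p$ in $K=\Q(\alpha)$, factoring $(b\alpha-a)$ as a split-prime power times an ideal $\mathfrak b$ of small norm when $\vv_p(b\alpha-a)$ is large, raising to $h_K$, balancing a generator $\gamma$ of $\mathfrak b^{h_K}$ against the fundamental unit so that $h^*(\gamma/\gamma')\ll(2-\mu')\log M$, and feeding the resulting unit-coefficient log-ratio into the $B'$-sharpened bound \eqref{eq4} — are all in place; your caveats about the imaginary quadratic case (complex logarithm plus $2\pi i$ term) and ideal-class representatives are the right ones, and the one small over-claim ($h^*(\gamma/\gamma')\asymp(2-\mu')\log M$ rather than the upper bound $h^*(\gamma/\gamma')\ll(2-\mu')\log M$, which is all that is needed) is harmless.
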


We conclude this subsection with an open question. 

\begin{problem} \label{Pbpmult}
For a given prime number $p$ and an arbitrary $\eps > 0$, to construct 
quadratic $p$-adic numbers $\alpha$ such that $\tmueff (\alpha) \le 1 + \eps$. 
\end{problem}

\subsection{Fractional parts of powers of real algebraic numbers}  \label{ssec5}

For a real number $x$, let 
$$
||x|| = \mbox{min}\{|x-m|:m\in\mathbb{Z}\}
$$
denote the distance to its nearest integer. 
In 1957 Mahler \cite{Mah57} applied Ridout's $p$-adic extension \cite{Rid57} 
 of Roth's theorem to prove the first assertion of the following result. 
The second assertion was proved in 2004 by Corvaja and Zannier \cite{CZ04}, 
who applied ingeniously the $p$-adic Schmidt Subspace Theorem. 
Recall that a Pisot number is a real algebraic integer greater than $1$ with the property 
that all of its Galois conjugates (except itself) lie in the open unit disc.

\begin{theorem}\label{MahCZ}
Let $r/s$ be a rational number greater than $1$ and which is not an integer. 
Let $\eps$ be a positive real number. 
Then, there exists an integer $n_0$ such that 
$$
\Big\| \Bigl( \, {r \over s} \, \Bigr)^n \Bigr\| > s^{- \eps n}, 
$$
for every integer $n$ exceeding $n_0$. 
More generally, let $\xi$ be a real algebraic number greater than $1$.  
If there are no positive integers $h$ such that $\xi^h$ is a Pisot number, then 
there exists an integer $n_0$ such that 
$$
\| \xi^n \| > \xi^{- \eps n}, 
$$
for every integer $n$ exceeding $n_0$.  
\end{theorem}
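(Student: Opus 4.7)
The first assertion follows from the second: for a non-integer rational $r/s>1$ in lowest terms, $(r/s)^h=r^h/s^h$ has denominator $s^h>1$ and hence fails to be an algebraic integer, \emph{a fortiori} cannot be a Pisot number, so the hypothesis of the general statement is met by $\xi=r/s$; the bound $\|\xi^n\|>\xi^{-\eps' n}$ then yields $\|\xi^n\|>s^{-\eps n}$ after setting $\eps=\eps'\log\xi/\log s$. I therefore concentrate on the general assertion and argue by contradiction, supposing that $\|\xi^n\|\le\xi^{-\eps n}$ for all $n$ in an infinite set $\mathcal{N}$. For each such $n$ let $a_n\in\Z$ be a nearest integer to $\xi^n$, so that $|\xi^n-a_n|\le\xi^{-\eps n}$ and $a_n\neq 0$ for large $n$. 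Work in $K=\Q(\xi)$ of degree $d$, with Galois conjugates $\xi=\xi_1,\ldots,\xi_d$, and let $S$ be the finite set of places of $K$ consisting of the archimedean places together with all finite places at which some $\xi_j$ is not a unit.

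Following Corvaja and Zannier \cite{CZ04}, the plan is to apply Schmidt's $p$-adic Subspace Theorem to the integer vectors $\mathbf{x}_n=(a_n,a_{n+1},\ldots,a_{n+N-1})\in\Z^N$ indexed by $n\in\mathcal{N}$, where the dimension $N$ is chosen sufficiently large in terms of $d$ and $\eps$. At each archimedean place $v$ of $K$ corresponding to an embedding $\sigma_j$, introduce the \emph{fixed} (independent of $n$) linear forms $L_{v,0}(X_0,\ldots,X_{N-1})=X_0$ and $L_{v,k}(X_0,\ldots,X_{N-1})=X_k-\sigma_j(\xi)^k X_0$ for $k=1,\ldots,N-1$; at finite places in $S$ use the coordinate forms. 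At the principal embedding the identity $L_{v,k}(\mathbf{x}_n)=(a_{n+k}-\xi^{n+k})-\xi^k(a_n-\xi^n)$ yields $|L_{v,k}(\mathbf{x}_n)|\ll\xi^{-\eps n}$, whereas at the remaining archimedean and finite places the forms are bounded only polynomially in $\|\mathbf{x}_n\|_\infty$. Combining these estimates with the product formula, one checks that the double product $\prod_{v\in S}\prod_k|L_{v,k}(\mathbf{x}_n)|_v$ is dominated by $\|\mathbf{x}_n\|_\infty^{-\delta}$ for some $\delta=\delta(\xi,\eps,N)>0$, once $N$ is large enough.

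The Subspace Theorem then forces the infinitely many $\mathbf{x}_n$ to lie in a finite union of proper subspaces of $\Q^N$. Selecting a fixed subspace carrying infinitely many of them, the coordinates satisfy a nontrivial linear recurrence $\sum_{k=0}^{N-1} c_k\, a_{n+k}=0$ along an infinite arithmetic progression; comparing with the expansion $a_n=\xi^n+(\text{contributions from other conjugates})+O(\xi^{-\eps n})$, one deduces that the characteristic polynomial $P(X)=\sum_k c_k X^k$ has $\xi^h$ as a root for some positive integer $h$ determined by the common difference of the progression, and that every other root of $P$ which actually intervenes must have modulus strictly less than $1$. The main obstacle is the final step: upgrading this to the statement that \emph{every} Galois conjugate of $\xi^h$ other than $\xi^h$ itself lies in the open unit disc, thereby making $\xi^h$ a Pisot number and contradicting the hypothesis. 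This requires a Galois-equivariant pigeonhole argument across the finitely many subspaces supplied by the Subspace Theorem, combined with a separation-of-exponents argument for linear recurrences in the spirit of Skolem--Mahler--Lech to preclude accidental cancellations among the power sums $\sum_j\xi_j^n$.
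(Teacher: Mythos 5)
The paper does not actually prove this theorem: it is stated as a survey item and attributed to Mahler \cite{Mah57} for the rational case and to Corvaja--Zannier \cite{CZ04} for the general case, with no proof supplied. So there is no ``paper's proof'' to match against; one must judge your sketch on its own merits.

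Your reduction of the first assertion to the second is correct and clean. However, the Subspace Theorem setup for the second assertion has a genuine gap. You suppose only that $\|\xi^n\| \le \xi^{-\eps n}$ for $n$ in an \emph{infinite} set $\mathcal{N}$ (which is the right negation), but then you feed in the vectors $\mathbf{x}_n = (a_n, a_{n+1}, \ldots, a_{n+N-1})$ and claim $|L_{\infty,k}(\mathbf{x}_n)| = |\theta_{n+k} - \xi^k \theta_n| \ll \xi^{-\eps n}$, where $\theta_m := a_m - \xi^m$. This requires $|\theta_{n+k}|$ to be small for \emph{all} $k=0,\ldots,N-1$, i.e.\ it requires $n, n+1, \ldots, n+N-1$ to all lie in $\mathcal{N}$. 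If $\mathcal{N}$ is sparse (e.g.\ gaps tending to infinity), then for $n \in \mathcal{N}$ and $k \ge 1$ you only have the trivial bound $|\theta_{n+k}| \le 1/2$, so $|L_{\infty,k}(\mathbf{x}_n)| \asymp 1$, the product at the principal place is $\asymp \xi^n \approx \|\mathbf{x}_n\|_\infty$, and the Subspace Theorem hypothesis fails. This is not a cosmetic issue: the whole point of the Corvaja--Zannier theorem is precisely to control a possibly very sparse set of exceptional $n$, and your construction collapses exactly in that regime. The actual Corvaja--Zannier argument avoids this by building, for each single $n \in \mathcal{N}$, a vector whose extra coordinates exploit the Galois conjugates of $\xi^n$ (and the rationality of the associated traces), rather than consecutive powers $\xi^{n+k}$; those conjugate coordinates carry the needed smallness at the non-principal places. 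In addition, the closing step you flag as ``the main obstacle'' (upgrading the recurrence conclusion to Pisotness of some $\xi^h$) is indeed left open, and your appeal to Skolem--Mahler--Lech is not directly applicable since $(a_n)$ is not known \emph{a priori} to be a linear recurrence.
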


Theorem  \ref{MahCZ} is ineffective in the sense that its proof does not yield an 
explicit value for the integer $n_0$. 
To get an effective improvement on the trivial estimate $\| (r/s)^n \| \ge s^{-n}$, 
Baker and Coates \cite{BaCo75} (see also \cite{Bu02} and \cite[Section 6.2]{Bu18b})
applied the theory of linear forms in $p$-adic logarithms, with a prime number $p$ which divides~$s$.

\begin{theorem}\label{BC}
Let $r/s$ be a rational number greater than $1$ and which is not an integer. 
Then, there exist effectively computable positive real numbers $\tau = \tau(r/s)$ 
and $n_0 = n_0 (r/s)$ such that 
$$
\Big\| \Bigl( \, {r \over s} \, \Bigr)^n \Bigr\| > s^{- (1 - \tau) n}, 
$$
for every integer $n$ exceeding $n_0$. 
\end{theorem}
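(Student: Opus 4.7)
Let $p$ be a prime divisor of $s$, so $\vv_p(s) \ge 1$ and, since $\gcd(r,s) = 1$, $p \nmid r$. For each integer $n \ge 1$, let $a_n$ be the nearest integer to $(r/s)^n$, and set
$$
\ell_n = r^n - a_n s^n \in \Z.
$$
Then $\|(r/s)^n\| = |\ell_n|/s^n$, and $\ell_n \ne 0$ since $s \ge 2$ and $\gcd(r,s) = 1$. Moreover, $\vv_p(r^n) = 0 < n \le \vv_p(a_n s^n)$, so $\vv_p(\ell_n) = 0$: the integer $\ell_n$ is a $p$-adic unit.

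The plan is to apply the $p$-adic bound \eqref{eq8} of Theorem \ref{Yu} to the nonzero two-term form
$$
\Lambda_n = r^n\,\ell_n^{-1} - 1 = \frac{a_n s^n}{\ell_n},
$$
taking $\alpha_1 = r$, $b_1 = n$, $\alpha_2 = \ell_n$, $b_2 = -1$ (so $|b_n| = 1$ and $D = 1$). The rewriting on the right, combined with $\vv_p(\ell_n) = 0$, yields
$$
\vv_p(\Lambda_n) = \vv_p(a_n) + n \vv_p(s) \ge n,
$$
while \eqref{eq8} provides an effectively computable absolute constant $c$ such that
$$
n \le \vv_p(\Lambda_n) < c\, p\, (\log r)(\log|\ell_n|)\,\log\!\Bigl(\frac{n\log r}{\log|\ell_n|}\Bigr).
$$
Suppose, for contradiction, that $|\ell_n| \le s^{\tau n}$ for some fixed $\tau \in \bigl(0,\,\log r/(e\log s)\bigr)$. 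On this range, the map $L \mapsto L\log(n\log r/L)$ is increasing, so substituting $\log|\ell_n| \le \tau n \log s$ and dividing by $n$ yields
$$
1 < c\, p\, \tau (\log r)(\log s)\,\log\!\Bigl(\frac{\log r}{\tau \log s}\Bigr).
$$
Since $\tau \log(1/\tau) \to 0$ as $\tau \to 0^+$, the right-hand side tends to $0$; hence an effectively computable $\tau = \tau(r/s) > 0$ can be chosen for which this inequality fails. The resulting contradiction forces $|\ell_n| > s^{\tau n}$, that is, $\|(r/s)^n\| > s^{-(1-\tau)n}$, for every $n$ larger than an effectively computable $n_0(r/s)$ (chosen so that $B = n$ in \eqref{eq8} and $|\ell_n| \ge 2$, the latter being automatic for $n$ large by the same type of argument applied with $h^*(\ell_n) = 1$).

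The whole argument hinges on the $B'$-refinement built into \eqref{eq8}, namely the factor $\log\bigl(B h^*(\alpha_1)/h^*(\alpha_2)\bigr)$ in place of $\log B$. Since $\log|\ell_n|$ grows linearly in $n$, the coarser $\log B = \log n$ would inflate the upper bound to $O(n \log n)$, which dwarfs the lower bound $n$ and leaves no slack for any positive $\tau$. The only substantive check in the proof is that the $\tau$-dependent balance $\tau \log(\log r/(\tau \log s))$ can indeed be driven below $1/(c\, p\, (\log r)(\log s))$, and this is where Feldman's saving, in its $p$-adic form due to Yu, is indispensable.
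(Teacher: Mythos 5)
Your proof is correct and takes essentially the same route as the paper: both apply the $B'$-refined $p$-adic estimate from Theorem~\ref{Yu} to the two-term form $r^n \ell_n^{-1} - 1$ (with $\ell_n = r^n - a_n s^n$), exploit the lower bound $\vv_p(\cdot) \ge n$, and conclude that $\log|\ell_n|$ must grow linearly in $n$. The only cosmetic difference is that you invoke the unconditional bound \eqref{eq8} while the paper uses the alternative ``\eqref{eq6} or \eqref{eq7}''; both are equivalent consequences of Yu's theorem here.
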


\begin{proof}
Let $n$ be a positive integer
and $A_n$ denote the nearest integer to $(r/s)^n$.
Set 
$$
m_n := r^n - A_n s^n.    
$$
Let $p$ be a prime divisor of $s$. 
By applying the first assertion of Theorem \ref{Yu} to estimate $\vv_p (r^n - m_n)$ we get that 
$$
n \le \vv_p (r^n - m_n) \ll_{r, s} \,  (\log |2 m_n| ) (\log n),
$$
hence
$$
n \ll_{r,s} \, (\log |2 m_n| ) \, (\log \log |2 m_n| ),
$$
which is not sufficient for our purpose. Fortunately, the second assertion of Theorem \ref{Yu} shows that
$$
n \ll_{r, s} \, \log |2 m_n| 
$$
or
$$
n \le \vv_p (r^n - m_n) \ll_{r, s} \,  (\log |2 m_n| ) \Bigl( \log {n \over  \log |2 m_n|} \Bigr),
$$
giving in both cases that 
\beq  \label{bound}
n \ll_{r, s} \log |2 m_n|.
\eeq 
Alternatively, Theorem \ref{BuLa} implies that 
$$
n \le \vv_p (r^n - m_n) \ll_{r, s} \,  (\log |2 m_n| ) \Bigl( \log {n \over  \log |2 m_n|} \Bigr)^2,
$$
and we get \eqref{bound} as well. 
This implies the existence of a positive real number $\delta$, depending 
only on $r$ and $s$, such that
$$
2 |m_n| \ge s^{\delta n}.
$$
By the definition of $m_n$ we conclude that 
$$
\Big\| \Bigl( \, {r \over s} \, \Bigr)^n \Bigr\| = {|m_n| \over s^{n}} \ge {1 \over 2 s^{(1 - \delta) n}}, \quad n \ge 1. 
$$
This completes the proof of the theorem. 
\end{proof}

For an arbitrary irrational, real algebraic number $\xi > 1$, 
Liouville's theorem also allows us, when $\xi^n$ is not an integer, to bound $\| \xi^n \|$ from below by a 
positive number raised to the power $-n$.  
This bound can be improved in an effective way when $\xi^n$ is neither an integer, nor a quadratic 
Pisot unit. 

\begin{theorem}\label{Main}
Let $\xi > 1$ be a real algebraic 
number of degree $d \ge 1$. 
Let $a_d$ denote the leading 
coefficient of its minimal defining polynomial over $\Z$ and 
$\xi_1, \ldots , \xi_d$ its Galois conjugates, ordered in such a way 
that $|\xi_1| \le \ldots \le |\xi_d|$. Let $j$ be such that $\xi = \xi_j$. 
Set 
$$
C_\xi = a_d  \, \xi^{d-1} \, \prod_{i > j} {|\xi_i| \over  \xi }.     
$$
If $\xi$ is not the $d$-th root of an integer, then we have 
\beq \label{nuLiouv}
\| \xi^n\| \ge 3^{-(d-1)} \, C_\xi^{-n}, \quad \hbox{for $n \ge 1$}.    
\eeq 
Otherwise, \eqref{nuLiouv} holds only for the positive integers $n$ such that 
$\xi^n$ is not an integer. 
\end{theorem}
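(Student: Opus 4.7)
The plan is a Liouville-type argument: I construct a nonzero rational integer whose absolute value bounds $\|\xi^n\|$ from below. Write $P(X)=a_dX^d+\cdots+a_0\in\Z[X]$ for the (primitive) minimal polynomial of $\xi$, with roots $\xi_1,\ldots,\xi_d$, and let $A_n\in\Z$ denote the nearest integer to $\xi^n$, so that $\|\xi^n\|=|\xi^n-A_n|\le\tfrac12$.

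The central step is to introduce
$$
M := \mathrm{Res}_X\bigl(P(X),\,A_n-X^n\bigr) = a_d^n\prod_{i=1}^d(A_n-\xi_i^n),
$$
which, being the resultant of two polynomials in $\Z[X]$, is a rational integer. The crucial point is that only the factor $a_d^n$ (rather than $a_d^{dn}$) is needed here; this is the economy that matches the factor $a_d$ appearing in $C_\xi$.

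Next I verify $M\neq 0$ under the hypothesis. If $M=0$, then some conjugate $\xi_i^n$ equals the integer $A_n$; being rational, it is fixed by the Galois group of a splitting field of $P$, and by transitivity of the Galois action on the roots of $P$ it follows that every $\xi_k^n$ equals $A_n$, so $\xi^n\in\Z$. Writing $m=\xi^n$, Gauss's lemma applied to the factorization of the primitive polynomial $X^n-m\in\Z[X]$ forces the monic rescaling $P/a_d$ to lie in $\Z[X]$; since $P$ itself is primitive, this gives $a_d=1$. The conjugates $\xi_i$ are then $n$-th roots of $m$, hence $|\xi_i|=\xi$ for all $i$, so $\xi^d=\prod_i|\xi_i|=|a_0|\in\Z$; that is, $\xi$ is the $d$-th root of an integer. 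Contrapositively, in the first case of the theorem one has $M\neq 0$ for every $n\ge 1$; in the second case, $M\neq 0$ precisely when $\xi^n\notin\Z$, as required.

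With $|M|\ge 1$ secured, only the $d-1$ remaining factors must be estimated. Using $|A_n|\le\xi^n+\tfrac12$ together with $\xi>1$, I obtain $|A_n-\xi_i^n|\le 3\xi^n$ whenever $|\xi_i|\le\xi$ (i.e.\ $i<j$) and $|A_n-\xi_i^n|\le 3|\xi_i|^n$ whenever $|\xi_i|\ge\xi$ (i.e.\ $i>j$). Inserting these bounds into $|M|=a_d^n\|\xi^n\|\prod_{i\neq j}|A_n-\xi_i^n|$ and rearranging yields $\|\xi^n\|\ge 3^{-(d-1)}C_\xi^{-n}$, via the identity $C_\xi^n=a_d^n\,\xi^{(j-1)n}\prod_{i>j}|\xi_i|^n$ that follows directly from the definition of $C_\xi$. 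I expect the integrality step to be the main obstacle: the direct norm argument ``the norm of the algebraic integer $a_d^n(A_n-\xi^n)$ is a rational integer'' yields only the weaker denominator $a_d^{dn}$, and one really must exploit the resultant identity (equivalently, Gauss's lemma on the factorization of $P$) to keep the power of $a_d$ down to $n$ and so obtain the sharp constant $C_\xi$.
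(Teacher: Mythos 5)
Your proof is correct, and it uses precisely the Liouville/resultant argument that the paper has in mind (the paper states Theorem \ref{Main} without proof, referring implicitly to \cite{Bu22}). The key points that need care are all handled properly: (i) using the resultant $M=\mathrm{Res}_X(P,A_n-X^n)=a_d^n\prod_i(A_n-\xi_i^n)\in\Z$ to keep the denominator at $a_d^n$ rather than the wasteful $a_d^{dn}$ from the naive norm argument, which is exactly what gives the factor $a_d$ (not $a_d^d$) in $C_\xi$; (ii) the Galois-transitivity plus Gauss's lemma argument identifying $M=0$ with $\xi^n\in\Z$ and hence (contrapositively) with $\xi$ being a $d$-th root of an integer, which is the only place the two cases of the statement diverge; and (iii) the elementary bounds $|A_n-\xi_i^n|\le 3\max\{\xi,|\xi_i|\}^n$ (valid since $\xi>1$ gives $\tfrac12<\xi^n$), which assemble into $1\le|M|\le 3^{d-1}C_\xi^n\|\xi^n\|$ via the identity $C_\xi=a_d\,\xi^{j-1}\prod_{i>j}|\xi_i|$. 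Nothing is missing.
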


For a real number $\theta$ which is not an integer 
define 
$$
\nu (\theta) = \limsup_{n \to + \infty,  \theta^n \notin \Z} \, {- \log \| \theta^n \| \over n}
$$
and let $\nueff (\theta)$ denote the infimum of the real numbers $\nu$ 
for which there exists an effectively computable integer $n_0 = n_0 (\theta)$ such that 
$(- \log \| \theta^n \| )/n \le \nu$ for $n \ge n_0$ and $\theta^n$ is not an integer. 

The first two assertions of the next theorem are restatements of 
Theorems \ref{MahCZ} and \ref{Main}, while the last one was established in \cite{Bu22}. 

\begin{theorem}    \label{Bcras} 
Let $\xi > 1$ be an algebraic real number which is not an integer. 
Then, $\nu (\xi) = 0$, unless $\xi$ is an integer root of a Pisot number.  
Furthermore, $\nueff (\xi) \le \log C_\xi$ and, if 
$\xi$ is not an integer root 
of a quadratic Pisot unit, then 
there exists a positive, effectively computable real number $\tau = \tau(\xi)$ 
such that $\nueff (\xi) \le (1 - \tau) \log C_\xi$. 
\end{theorem}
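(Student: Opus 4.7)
The first two assertions are restatements of Theorem~\ref{MahCZ} and Theorem~\ref{Main}, so only the effective improvement in the third part needs a new argument. Assume $\xi > 1$ is algebraic of degree $d$, not an integer, and not an integer root of a quadratic Pisot unit. For each $n \ge 1$ with $\xi^n \notin \Z$, let $p_n$ denote the integer closest to $\xi^n$ and set $\varepsilon_n = \xi^n - p_n$, so that $|\varepsilon_n| = \|\xi^n\|$. Following the proof of Theorem~\ref{Main}, the rational integer
$$
N_n := a_d^{nd} \prod_{i=1}^d (\xi_i^n - p_n) \in \Z \setminus \{0\}
$$
yields the Liouville-type bound $|\varepsilon_n| \ge 3^{-(d-1)} C_\xi^{-n}$, extracted from $|N_n| \ge 1$ together with the trivial upper bounds $|\xi_i^n - p_n| \le 3 \max(\xi, |\xi_i|)^n$ for $i \ne j$. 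The aim is to replace $C_\xi^{-n}$ by $C_\xi^{-(1-\tau)n}$ for an effectively computable positive $\tau = \tau(\xi)$.

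The overall strategy mirrors that of Theorem~\ref{BC}, which treats the rational case via $p$-adic linear forms in logarithms with the $B'$ sharpening. The key observation is that, in the linear form
$$
\Lambda_n = n \log \xi - \log p_n = \log\!\Bigl(1 + \frac{\varepsilon_n}{p_n}\Bigr)
$$
(or its $p$-adic analogue at a prime divisor of $a_d$ when $\xi$ is not an algebraic integer), one has $h^*(p_n) \asymp n \log \xi$, so that the parameter
$$
B' \asymp \frac{n}{h^*(p_n)} + \frac{1}{h^*(\xi)}
$$
is bounded independently of $n$. Consequently $\log B' = O(1)$ and an application of Theorem~\ref{lflog} (respectively Theorem~\ref{Yu} or Theorem~\ref{BuLa}) delivers a bound of the form $\log |\Lambda_n| \ge -c(\xi)\, n$ rather than the much weaker $-c(\xi)\, n \log n$ produced by the non-refined LFL. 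This linear-in-$n$ saving is exactly the source of the improvement from the Liouville bound $|\varepsilon_n| \ge C_\xi^{-n}$ to the desired $|\varepsilon_n| \ge C_\xi^{-(1-\tau)n}$.

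The main obstacle is to verify, case by case, that the implicit LFL constant can be taken strictly smaller than $\log(\xi C_\xi)$, so that a positive $\tau$ genuinely emerges. The case of rational $\xi$ is exactly Theorem~\ref{BC}. For $\xi$ of degree $d \ge 3$, the same LFL argument used in the proof of Feldman's Theorem~\ref{feld} furnishes the required effective gap. For a quadratic Pisot number $\xi$ of non-unit norm, the gain is unconditional: the identity $\|\xi^n\| = |\bar\xi|^n = (|\mathrm{Nm}(\xi)|/\xi)^n$ with $|\mathrm{Nm}(\xi)| \ge 2$ gives $\nueff(\xi) = \log \xi - \log|\mathrm{Nm}(\xi)| < \log C_\xi$. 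When $\xi$ is quadratic with $|\bar\xi| \ge 1$, expanding $\xi^n$ in an integral basis $\{1, \omega\}$ of $\Q(\xi)$ reduces the problem to an effective lower bound of the shape $\|B_n \omega\| \gg B_n^{-(1-\tau)}$, obtainable from the quadratic refinement behind Theorem~\ref{th:vb}. The excluded case of $\xi$ an integer root of a quadratic Pisot unit (characterised by $|\bar\xi| = \xi^{-1}$) is precisely where $\|\xi^n\| \sim \xi^{-n}$ is sharp asymptotically, so no effective improvement can be extracted.
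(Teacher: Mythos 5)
The paper does not prove the third assertion of Theorem~\ref{Bcras}; it simply cites \cite{Bu22} for it, so there is no "paper's proof" to compare against. That said, your sketch contains a serious and concrete gap at its centre. The Archimedean linear form $\Lambda_n = n\log\xi - \log p_n$ you set up has $|\Lambda_n|\asymp \|\xi^n\|/\xi^n$, and you correctly observe that $B'$ stays bounded, so the LFL estimate \eqref{eq4} gives $\log|\Lambda_n|\ge -c(\xi)\,n$. But then $\log\|\xi^n\|\ge -(c(\xi)-\log\xi)\,n$, and to beat the Liouville bound one would need $c(\xi)<\log(\xi C_\xi)$. This is not an "obstacle to verify case by case": the constant $c(\xi)$ coming out of Theorem~\ref{lflog} contains the product $h^*(\xi)\cdot h^*(p_n)/n \asymp h^*(\xi)\log\xi$ multiplied by an enormous absolute factor (of the shape $c^n n^{3n}$ or worse), so $c(\xi)$ always vastly exceeds $\log(\xi C_\xi)$ and the Archimedean route yields nothing. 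Contrast this with Theorem~\ref{BC}: there the quantity $n$ also sits on the \emph{left}, as $n\le v_p(r^n-m_n)$, so the inequality $n\ll \log|2m_n|\,\log\bigl(n/\log|2m_n|\bigr)$ forces $\log|m_n|\gg n$ no matter how large the implied LFL constant is — the constant only shrinks $\tau$, it never kills it. Your parenthetical "$p$-adic analogue at a prime divisor of $a_d$" captures the right mechanism when $\xi$ is not an algebraic integer, but this is precisely the case where the rational-case template applies; the genuinely hard case, $\xi$ an algebraic integer, is exactly where no such prime is available, and your sketch does not address it.

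Two of your case reductions are also wrong as stated. For $\xi$ quadratic with $|\bar\xi|\ge 1$, writing $\xi^n = A_n + B_n\omega$ and asking for $\|B_n\omega\|\gg B_n^{-(1-\tau)}$ would require $\mueff(\omega)<2$, which is false — the irrationality exponent of a real quadratic is exactly $2$, and Theorem~\ref{th:vb} concerns $\|b^n\omega\|$ with $b$ a \emph{fixed base}, not $\|B\omega\|$ for arbitrary $B$; the multiplicative structure of $b^n$ is essential there. Likewise, for degree $d\ge 3$, Theorem~\ref{feld} bounds $|\xi - p/q|$ from below as $q$ varies and gains from the unit-group structure attached to a growing denominator $q$; in $\|\xi^n\| = |\xi^n-p_n|$ the denominator is fixed at $1$, so Feldman's argument does not transfer — the object whose size grows like $n$ that must be paired against the LFL constant has to be manufactured differently (this is the content of \cite{Bu22}). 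The one case you do handle correctly and unconditionally is $\xi$ a quadratic Pisot non-unit, where $\|\xi^n\|=|\bar\xi|^n$ and $|N(\xi)|\ge 2$ give $\nueff(\xi)=\log\xi-\log|N(\xi)|<\log C_\xi$ directly.
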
 

Sometimes, the hypergeometric method yields effective improvements of Theorem \ref{Bcras}.
This is the case for the algebraic numbers $\sqrt{2}$ and $3/2$, see 
Beukers' seminal papers \cite{Beu80,Beu81} 
and the subsequent works \cite{BaBe02,Zu07} where it is shown that 
$$
\nueff ( \sqrt{2} ) \le 0.595, \quad \nueff ( 3/2) < 0.5443, 
$$
respectively. 

Furthermore, for every $\eps > 0$, Bennett \cite{Be93a} constructed an infinite set $\cS_\eps$ of rational numbers,
dense in $(1, + \infty)$, such that every $p/q$ in $\cS_\eps$ satisfies $\nueff(p/q) < \eps \log q$, that is, 
$\nueff(p/q) < \eps \log C_{p/q}$; see also \cite{Be93b}.

\subsection{Summary}  \label{ssec6}

We have seen that, in each of the five subsections above (with one exception, see Problem \ref{Pbpmult}), we have:

\begin{itemize}
\item[(i)] an easy, effective bound $M$ (obvious or coming from a Liouville-type inequality); 
\item[(ii)] the exact result $m$ (but with an ineffective proof);
\item[(iii)] a small effective improvement $M - \tau$ on the easy bound (by using the $B'$); 
\item[(iv)] a more substantial effective improvement in certain specific cases, with even, for every $\eps > 0$, 
some explicit examples for which one gets an effective bound smaller than $m + \eps$. 
\end{itemize}

Moreover, in the cases where the effective bound is very close to the exact value obtained by ineffective methods, the 
specific numbers constructed with this property 
are in most cases very close to $1$, for the usual or for a $p$-adic absolute value.

\section{On the $S$-part of integer sequences}   \label{sec:3}

Throughout this section, we let $S = \{\ell_1, \ldots , \ell_s\}$ denote a finite, non-empty set of $s$ distinct prime numbers.

\begin{definition} 
Let $n$ be a nonzero integer and write $n = A \ell_1^{r_1} \ldots \ell_s^{r_s}$, where 
$r_1, \ldots , r_s$ are non-negative integers and $A$ is an integer 
relatively prime to $\ell_1 \ldots \ell_s$. We define the $S$-part $[n]_S$ 
of $n$ by 
$$
[n]_S := \ell_1^{r_1} \ldots \ell_s^{r_s}.
$$
We set $[0]_S = 0$. 
\end{definition}

Let $\cN$ denote a sequence of integers, defined in some natural way. 
We discuss whether one can improve the trivial estimate $[n]_S \le n$ 
for integers $n$ in $\cN$. 
When this is the case, then, by taking for $S$ the set composed of the first $s$ prime numbers, 
this often implies a lower bound for the greatest 
prime factor of $n$ in $\cN$. 

We consider several different types of sets $\cN$ and survey various recent results obtained in 
\cite{Bu18a,Bu21,Bu22,BuEv17,BuEvGy18,BuKa18}. 
In this section, all the theorems with an arbitrary, positive $\eps$ in their statement are proved by means of the 
Schmidt Subspace Theorem and are all ineffective, while the corresponding effective statements 
involve a positive $\tau$ and their proofs depend on the theory of linear forms in logarithms and, in 
a crucial way, on the term $B'$. 

We postpone to Section \ref{sec:6} new results on the $S$-part of sequences of convergents to real numbers. 

\subsection{Integers with few digits in an integer base}

For integers $b \ge 2$ and $k \ge 2$, we denote by $(u_j^{(b, k)})_{j \ge 1}$ 
the sequence, arranged in increasing order, of all positive integers which are 
not divisible by $b$ and have at most $k$ nonzero digits in their representation in base $b$. 
Said differently, $(u_j^{(b, k)})_{j \ge 1}$ is the ordered sequence composed of the integers 
$1, 2, \ldots , b-1$ and those of the form
$$
d_k b^{n_k} + \ldots + d_2 b^{n_2} + d_1
$$
with
$$
n_k > \cdots > n_2 > 0, \quad  
0 \le d_1, \ldots , d_k \le b-1, \quad
d_1 d_k \not= 0.
$$
The following result reproduces \cite[Theorem 1.1]{Bu18a} and \cite[Theorem 1.5]{Bu21}. It extends \cite[Theorem 1.2]{Bu18a}, 
which deals with the case $k=3$.

\begin{theorem}   \label{th:ubk}
Let $b \ge 2, k \ge 2$ be integers and $\eps$ a positive real number. 
Let $S$ be a finite, non-empty set of prime numbers. 
Then, we have 
$$
[u_j^{(b, k)}]_S <  (u_j^{(b, k)})^{\eps}, 
$$
for every sufficiently large integer $j$. 
In particular, the greatest prime factor of $u_j^{(b, k)}$ tends to infinity as $j$ tends to
infinity. 
Furthermore, there exist effectively computable positive numbers $j_0$ and $\tau$, depending 
only on $b, k$, and $S$, such that
$$
[u_j^{(b, k)}]_S \le (u_j^{(b, k)})^{1 - \tau}, \quad
\hbox{for every $j \ge j_0$}.
$$
\end{theorem}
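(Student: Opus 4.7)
I would handle the two parts of Theorem~\ref{th:ubk} by separate methods. For the ineffective bound $[u_j^{(b,k)}]_S<(u_j^{(b,k)})^\eps$, I would apply the $p$-adic Schmidt Subspace Theorem. To each $u=u_j^{(b,k)}=d_kb^{n_k}+\cdots+d_1$ I associate the point
$$
\mathbf{x}_j=(d_1,\,d_2b^{n_2},\,\ldots,\,d_kb^{n_k},\,\ell_1^{v_{\ell_1}(u)},\,\ldots,\,\ell_s^{v_{\ell_s}(u)})\in\Z^{k+s},
$$
together with an Archimedean linear form $L(\mathbf{x})=x_1+\cdots+x_k$ that recovers $u$ by cancellation and $s$ non-Archimedean forms isolating the $S$-part factors. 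If $[u_j]_S\ge u_j^\eps$ held for infinitely many $j$, the product of absolute values of these forms at the finite set of places under consideration, relative to the height of $\mathbf{x}_j$, would be small enough to trigger the theorem, confining all but finitely many $\mathbf{x}_j$ to a fixed union of proper subspaces. A standard case analysis of these subspaces produces algebraic identities incompatible with the boundedness of the digits $d_i$ and the genericity of the exponent pattern $n_2<\cdots<n_k$, ruling out all but finitely many exceptional $j$.

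For the effective statement, the plan is to bound $v_\ell(u)$ uniformly for each $\ell\in S$ and sum. If $\ell\mid b$, then $u\equiv d_1\pmod{b^{n_2}}$ forces $v_\ell(u)=v_\ell(d_1)=O_b(1)$. For $\ell\nmid b$ I appeal to Baker's $p$-adic theory. The case $k=2$ is immediate: factoring $u=d_1\bigl((d_2/d_1)b^{n_2}+1\bigr)$ and applying Theorem~\ref{BuLa} (or Theorem~\ref{Yu} with $n=2$, $|b_n|=1$, using the $B'$-alternative \eqref{eq6}--\eqref{eq7}) gives $v_\ell(u)\le c_1(b,S)\log n_2$ directly. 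For $k\ge 3$ one cannot factor $u$ as a single product-minus-one, so I would linearise through the Teichm\"uller decomposition $\Z_\ell^*\simeq\mu_{\ell-1}\times(1+\ell\Z_\ell)$: for each fixed residue class of $(n_2,\ldots,n_k)\pmod{\ell-1}$, expand $b^{n_i}=\omega(b)^{n_i}\exp_\ell(n_i\log_\ell\langle b\rangle)$ and reduce the cancellation condition $v_\ell(u)\ge r$ to a $p$-adic polynomial congruence in the $n_i$. A Hensel-type iteration then produces, at each Hensel level, a two-variable $p$-adic linear form with $|b_n|=1$, to which Theorem~\ref{Yu} with the $B'$-alternative applies and supplies the uniform bound $v_\ell(u)\ll_{b,k,S}\log n_k$.

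Summing over $\ell\in S$ yields $\log[u_j^{(b,k)}]_S\le c_2(b,k,S)\log n_k+O_S(1)$, whereas $\log u_j^{(b,k)}\ge n_k\log b-O(1)$. Consequently $[u_j^{(b,k)}]_S\le(u_j^{(b,k)})^{1-\tau}$ for, say, $\tau=1/2$ and every $j$ beyond an effectively computable $j_0(b,k,S)$. The main obstacle is the case $k\ge 3$: $p$-adic LFL inherently bounds $v_\ell$ of a product-minus-one, not of a sum of more than two monomials, so some form of linearisation is forced upon us. The Teichm\"uller/logarithm decomposition above accomplishes this, but it is delicate because one must propagate the saving from one Hensel level to the next; this is exactly where the $B'$-refinement of Theorem~\ref{Yu} is indispensable, since only it (rather than the crude $\log B$) survives repeated applications of the iteration. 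Replacing $B'$ by $\log B$ would destroy the saving and collapse the conclusion to the trivial Liouville-type $[u]_S\le u$.
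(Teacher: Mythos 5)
Your plan for the effective part contains a genuine gap for $k\ge 3$, and it is in fact the crux of the whole theorem. You propose to bound $\vv_\ell(u)$ uniformly by $\ll_{b,k,S}\log n_k$ for each $\ell\in S$ and then sum. For $k=2$ this works (and indeed gives the stronger, fully effective bound $[u]_S\ll u^{\eps}$, since $u$ is then literally a product minus a unit). But for $k\ge 3$ the quantity $\vv_\ell(u)$ is \emph{not} $\ll\log n_k$: write $u=b^{n_2}\bigl(d_kb^{n_k-n_2}+\cdots+d_2\bigr)+d_1$ and set $v=d_kb^{n_k-n_2}+\cdots+d_2$. The only number you can feed into the $p$-adic LFL machinery here is the two-term expression $b^{n_2}(v/(-d_1))-1$, in which $v$ is an unknown algebraic number with $h^*(v)\asymp n_k-n_2$. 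Theorem~\ref{Yu} then yields at best $\vv_\ell(u)\ll_{\ell,b}(n_k-n_2)\log^+(n_2/(n_k-n_2))$, which is $\asymp n_k$ when $n_k-n_2$ is a positive proportion of $n_k$. That is unavoidable: $u$ may itself be a small cofactor times a huge power of $\ell$, as already $2^4+2^3+1=5^2$ illustrates in miniature, and nothing in the hypotheses prevents $\vv_\ell(u)$ from growing linearly in $n_k$. This is also why your proposed conclusion is too strong; an effective $[u]_S\ll u^{\eps}$ for $k\ge 3$ is precisely what is \emph{not} known, and the theorem only claims $u^{1-\tau}$.

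Your Teichm\"uller--Hensel ``linearisation'' does not rescue this: decomposing $b^{n_i}=\omega(b)^{n_i}\exp_\ell(n_i\log_\ell\langle b\rangle)$ turns the divisibility condition $\ell^R\mid u$ into an $\ell$-adic analytic congruence in $(n_2,\ldots,n_k)$, and a Hensel lift counts or parametrises its solutions, but at no stage does it manufacture a \emph{single} quantity of the form $\alpha_1^{b_1}\cdots\alpha_m^{b_m}-1$ with bounded heights and $|b_m|=1$ to which Theorem~\ref{Yu} could be applied. The $p$-adic LFL is intrinsically a statement about a product minus $1$; a genuine $k$-term sum with $k\ge 3$ cannot be reduced to it without packaging $k-1$ of the monomials into one algebraic number $v$, whose height is then large.

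The route the paper actually takes, following \cite{Bu21}, is dual to yours: one does not try to bound the $S$-part from above term by term, but instead bounds the cofactor $A:=u/[u]_S$ from below. Writing $u=u_1+u_2$ with $u_1=d_kb^{n_k}+\cdots+d_\ell b^{n_\ell}$ a partial sum cut at a large gap $n_\ell-n_{\ell-1}\ge n_k/(k-1)$, one considers the \emph{Archimedean} linear form
$$
\Lambda=\frac{A\,\ell_1^{r_1}\cdots\ell_s^{r_s}}{u_1}-1=\frac{u_2}{u_1},
$$
which has $|\Lambda|\ll b^{n_{\ell-1}-n_k}$, and applies \eqref{eq4} or \eqref{eq5} with $\alpha_n=A/(d_kb^{n_k-n_\ell}+\cdots+d_\ell)$ and $b_n=1$. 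The $B'$ refinement puts $h^*(\alpha_n)$ into the denominator of the logarithmic factor and lets one conclude $\log A\gg_{b,k,S}n_k$, hence $[u]_S\le u^{1-\tau}$. Your mechanism addresses the wrong quantity, and its replacement by the paper's one is not a cosmetic change; it is the step where the $B'$ term actually earns its keep, exactly as in the proof of Theorem~\ref{qqS} later in the paper. The ineffective Subspace-theorem half of your argument is, in outline, compatible with what \cite{Bu18a} does and is not the source of the problem.
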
 

The dependence of $\tau$ on $b, k, S$ is made explicit in \cite{Bu21}, up to some absolute 
numerical constants. 
As a consequence of the main result of \cite{BuKa18}, 
we get that, for $k \ge 3$, there exists an effectively computable positive integer $n_0$, 
depending 
only on $b,k$ and $\eps$, such that any integer $n > n_0$ which is 
not divisible by $b$ and has all of its prime factors less than 
$$
\Bigl({1\over k-2} - \eps\Bigr) (\log \log n) {\log \log \log n \over \log \log \log \log n} 
$$
has at least $k+1$ nonzero digits in its $b$-ary representation. The term $\frac{1}{k-2}$ can be replaced 
by $\frac{1}{k-1}$, see \cite{Bu21}.


A version of Theorem \ref{th:ubk} for the Fibonacci numeration system (Zeckendorf expansions) has been 
established in \cite{Bu21}. 
We point out that the proofs in \cite{Bu21} depend only on Theorem \ref{lflog}, while Theorems \ref{lflog} 
and \ref{Yu} are combined in \cite{Bu18a,BuKa18}.

\subsection{Recurrence sequences of integers}

Let $k$ be a positive integer, and let $a_1, \ldots , a_k$ and $u_0, \ldots , u_{k-1}$ be integers
such that $a_k$ is non-zero and $u_0, \ldots , u_{k-1}$ are not all zero. Put
\begin{equation}  \label{un}
u_n = a_1 u_{n-1} + \ldots + a_{k} u_{n-k}, \quad \hbox{for $n \ge k$}.     
\end{equation}
The sequence $(u_n)_{n \ge 0}$ is a linear recurrence sequence of integers of order $k$. Its 
characteristic polynomial 
$$
G(X) := X^k - a_1 X^{k-1} - \ldots - a_k
$$
factors as 
$$
G(X) = \prod_{i=1}^t \, (X - \alpha_i)^{h_i},
$$
where $\alpha_1, \ldots , \alpha_t$ are distinct algebraic numbers 
with $|\alpha_1| \ge |\alpha_2| \ge \ldots \ge |\alpha_t|$ and $h_1, \ldots , 
h_t$ are positive integers. 
The recurrence sequence $(u_n)_{n \ge 0}$ is said to be {\it degenerate} if there are 
integers $i, j$ with $1 \le i < j \le t$ such that $\alpha_i / \alpha_j$ is a root of unity.
It is said to have a dominant root if $|\alpha_1| > |\alpha_2|$. 

Choose embeddings of $\Q(\alpha_1, \ldots , \alpha_t)$ in $\C$ and of $\Q(\alpha_1, \ldots , \alpha_t)$ in $\Q_p$, 
for every prime $p$.
These embeddings define extensions to $\Q(\alpha_1, \ldots , \alpha_t)$ of the ordinary absolute value $|\cdot |$
and of the $p$-adic absolute value $|\cdot |_p$ for every prime $p$, 
normalized such that $|p|_p = p^{-1}$.

\begin{theorem}    \label{srl}
Let $(u_n)_{n \ge 0}$ be a non-degenerate recurrence sequence of integers defined in \eqref{un}. 
Let  $S := \{\ell_1, \ldots , \ell_s\}$ be a finite, non-empty set of prime numbers, and set
$$
\delta :=-{\sum_{i=1}^s \log\max \{|\alpha_1|_{\ell_i}, \ldots ,  |\alpha_t|_{\ell_i}\} \over
\log\max \{|\alpha_1|, \cdots , |\alpha_t|\} }.   
$$
Let $\eps >0$. Then, we have
$$
|u_n|^{\delta -\eps}\leq [u_n]_S\leq |u_n|^{\delta +\eps}, 
$$ 
for every sufficiently large $n$. 
In particular, if $\gcd (\ell_1\cdots \ell_s,a_1, \ldots ,  a_k)=1$, then we have 
$$
[u_n]_S\leq |u_n|^{\eps}, 
$$
for every sufficiently large $n$.
%
Furthermore, if $(u_n)_{n \ge 0}$ has a dominant root,
then there exist effectively computable positive numbers $n_0$ and $\tau$, depending 
only on $(u_n)_{n \ge 0}$ and $S$, such that
$$
[u_n]_S \le |u_n|^{1 - \tau}, \quad
$$
for every $n \ge n_0$. 
\end{theorem}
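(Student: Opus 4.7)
The proof naturally splits into an ineffective part (the first two assertions, involving an arbitrary $\eps > 0$) and an effective part (the dominant-root case, with a fixed $\tau > 0$). Throughout, I work with the closed form $u_n = \sum_{i=1}^t P_i(n) \alpha_i^n$, where $P_i \in \overline{\Q}[X]$ has degree at most $h_i - 1$.

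For the two-sided ineffective bound $|u_n|^{\delta - \eps} \le [u_n]_S \le |u_n|^{\delta + \eps}$, the lower bound is almost trivial: the estimate $|u_n|_\ell \le c \max_j |\alpha_j|_\ell^n$ (valid for each $\ell \in S$ by the non-Archimedean triangle inequality applied to $\sum_i P_i(n) \alpha_i^n$), combined with $|u_n| \le c' \max_j |\alpha_j|^n \cdot n^{O(1)}$, gives, via $[u_n]_S = \prod_i |u_n|_{\ell_i}^{-1}$, the required inequality up to a factor $n^{O(1)}$ absorbed into $|u_n|^{-\eps}$. The upper bound is harder and relies on the $p$-adic Schmidt Subspace Theorem applied to $u_n = \sum_i P_i(n) \alpha_i^n$: it provides companion lower bounds $|u_n|_\ell \ge \max_j |\alpha_j|_\ell^n \cdot |u_n|^{-\eta}$ at each $\ell \in S$ and $|u_n| \ge \max_j |\alpha_j|^n \cdot |u_n|^{-\eta}$ for arbitrary $\eta > 0$, outside a finite union of arithmetic progressions on which a sub-sum $\sum_{i \in I} P_i(n) \alpha_i^n$ would have to vanish identically --- possibilities that are ruled out by non-degeneracy. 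The second assertion is the case $\delta = 0$: the hypothesis $\gcd(\ell_1 \cdots \ell_s, a_1, \ldots, a_k) = 1$ forces $a_k = \pm \prod_j \alpha_j^{h_j}$ to be a unit at each $\ell_i \in S$, so each $\alpha_j$ is an $\ell_i$-unit and $\max_j |\alpha_j|_{\ell_i} = 1$.

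For the effective part, assume $|\alpha_1| > |\alpha_i|$ for $i \ge 2$, and first treat the case where $\alpha_1$ is a simple root, so that $P_1(n) = c_1$ is a fixed nonzero algebraic number. The key identity is
\[
\frac{u_n}{c_1 \alpha_1^n} - 1 \;=\; \sum_{i \ge 2} \frac{P_i(n)}{c_1} \Bigl( \frac{\alpha_i}{\alpha_1} \Bigr)^n \;=\; O\bigl( n^{O(1)} (|\alpha_2|/|\alpha_1|)^n \bigr) \;=\; O(e^{-c_0 n})
\]
for some effective $c_0 > 0$. Writing $u_n = [u_n]_S \cdot v_n$ with $\gcd(v_n, \ell_1 \cdots \ell_s) = 1$ (and reducing to the case where $c_1, \alpha_1, v_n$ are positive by absorbing signs), the quantity
\[
\Lambda_n := \log |v_n| + \sum_{i=1}^s r_i \log \ell_i - \log |c_1| - n \log |\alpha_1| \qquad (r_i = \vv_{\ell_i}(u_n))
\]
is a linear form in logarithms of $s+3$ algebraic numbers, with coefficient $+1$ on $\log |v_n|$, other coefficients bounded by $O(n)$, and $|\Lambda_n| \le 2 e^{-c_0 n}$. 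Applying \eqref{eq4} of Theorem \ref{lflog} with $\alpha_n := v_n$ (so $|b_n|=1$) and $B = O(n)$ gives
\[
c_0 n \;\le\; -\log |\Lambda_n| \;\le\; C_1(u, S) \, h^*(v_n) \, \log \frac{C_2(u, S) \, n}{h^*(v_n)}.
\]
Setting $h^*(v_n) = x n$ reduces this to $c_0 \le C_1 \, x \, \log(C_2/x)$, and since $x \log(1/x) \to 0$ as $x \to 0^+$, we must have $x \ge \tau_0$ for some effective $\tau_0(u, S) > 0$. Thus $\log |v_n| \ge \tau_0 n - O(1)$, and comparing with $\log |u_n| = n \log |\alpha_1| + O(\log n)$ yields $[u_n]_S = |u_n|/|v_n| \le |u_n|^{1-\tau}$ for $\tau = \tau_0/(2 \log |\alpha_1|)$ and all sufficiently large $n$.

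The main obstacle is the case when $\alpha_1$ has multiplicity $h_1 \ge 2$: then $P_1(n)$ is a non-constant polynomial, and expanding $\log |P_1(n)|$ inside $\Lambda_n$ introduces $h_1 - 1$ extra logarithms of algebraic numbers of height $O(\log n)$. A naive application of \eqref{eq4} then acquires a spurious factor $(\log n)^{h_1 - 1}$ that the exponential decay of $|\Lambda_n|$ cannot absorb into a lower bound of the form $h^*(v_n) \gg n$. The remedy is to supplement the Archimedean estimate by the $p$-adic bound \eqref{eq7} of Theorem \ref{Yu}, applied for each $\ell_i \in S$ to the $\ell_i$-adic analogue of $u_n / (P_1(n) \alpha_1^n) - 1$; this directly controls each $r_i = \vv_{\ell_i}(u_n)$ through a factor $\log(n / h^*(v_n))$ and sidesteps the logarithm of the polynomial prefactor. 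This is exactly the situation in which the $B'$-type refinement, in the form of the alternative `\eqref{eq6} or \eqref{eq7}' of Theorem \ref{Yu}, is essential to the argument.
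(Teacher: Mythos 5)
The paper does not give its own proof of Theorem \ref{srl}: it cites \cite{BuEv17} and only describes the overall strategy in the section preamble (Subspace Theorem for the $\eps$-statements, linear forms in logarithms with the $B'$-term for the effective $\tau$-statement). Your proposal follows that blueprint, and the core effective step --- isolating the ``prime-to-$S$'' part $v_n$ as the single term with coefficient $1$ in the linear form, applying \eqref{eq4}, and running the $x\log(C/x)$ argument to conclude $h^*(v_n)\gg n$ --- is exactly the right mechanism and is carried out correctly for a simple dominant root. The first ineffective assertion is also handled in essentially the expected way (ultrametric upper bound plus Subspace-type lower bounds), modulo some care about $n$ with $|u_n|$ small.

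There are, however, two genuine gaps. First, the deduction of the $\delta=0$ case is wrong as written: $\gcd(\ell_1\cdots\ell_s,a_1,\ldots,a_k)=1$ does \emph{not} force $a_k$ to be an $\ell_i$-unit for each $\ell_i$ (take $\ell_1=2$, $a_1=3$, $a_2=4$: the gcd is $1$, yet $a_2$ is not a $2$-adic unit), so the claim ``each $\alpha_j$ is an $\ell_i$-unit'' does not follow from your argument. The correct deduction is: the $\alpha_j$ are algebraic integers, so $|\alpha_j|_{\ell_i}\le 1$ for all $j$; if they all satisfied $|\alpha_j|_{\ell_i}<1$, then every $a_m=\pm e_m(\alpha_1,\ldots)$ would be divisible by $\ell_i$, contradicting the gcd hypothesis; hence $\max_j|\alpha_j|_{\ell_i}=1$ for each $i$, and $\delta=0$. (Note also that one only gets $\max_j|\alpha_j|_{\ell_i}=1$, not that \emph{every} $\alpha_j$ is an $\ell_i$-unit; that is all that is needed.)

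Second, and more importantly, the proposed fix for a dominant root of multiplicity $h_1\ge 2$ does not work. You correctly observe that the extra factor $h^*(P_1(n))\asymp\log n$ ruins the linear lower bound $h^*(v_n)\gg n$. But the quantity $u_n/(P_1(n)\alpha_1^n)-1$ is Archimedeanly small, not $\ell_i$-adically small: its $\ell_i$-adic valuation equals $\vv_{\ell_i}\bigl(\sum_{j\ge 2}P_j(n)\alpha_j^n\bigr)-\vv_{\ell_i}(P_1(n)\alpha_1^n)$, which has no reason to be large and positive --- indeed, when $r_i=\vv_{\ell_i}(u_n)$ is large and $\vv_{\ell_i}(P_1(n)\alpha_1^n)$ is bounded, $u_n/(P_1(n)\alpha_1^n)$ is $\ell_i$-adically close to $0$, not to $1$. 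So Theorem \ref{Yu} (including the alternative \eqref{eq6}/\eqref{eq7}) cannot be applied to it, and the multiple-root case remains unproved by the argument you describe. Removing the $(\log n)^{h_1-1}$ loss requires a genuinely different idea, and simply invoking the $p$-adic estimate here is a red herring.
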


Removing the dominant root assumption in the last statement of Theorem \ref{srl} 
seems to be very difficult. However, this can be 
done for non-degenerate binary recurrence sequences of integers; see \cite{BuEv17} for 
references.

\subsection{Polynomials, binary forms, and decomposable forms} 

The results of this subsection have been established in \cite{BuEvGy18}; see the references 
therein for earlier works. 

\begin{theorem}\label{thm1a.1}
Let $f(X)$ be an integer polynomial of degree $d \geq 2$. 
Let $S$ be a non-empty set of prime numbers. 
If $f(X)$ has no multiple zeros, then, for every $\eps >0$ and
for every integer $n$, 
we have 
\[
[f(n)]_S\ll_{f,S,\eps} |f(n)|^{(1/d)+\eps}. 
\]
If $f(X)$ has at least two distinct roots, then,  
for every integer $n$, 
we have
\[
[f(n)]_S \ll_f  |f(n)|^{1-\tau},
\]
where $\tau$ is an effectively computable positive number that depends
only on $f(X)$ and $S$.
\end{theorem}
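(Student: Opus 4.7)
The plan is to factor $f(X) = a_d \prod_{i=1}^{r} (X - \alpha_i)^{h_i}$ over a splitting field $K$ and to study the factors $n - \alpha_i \in K$. Let $S_K$ denote the set of places of $K$ lying above those in $S \cup \{\infty\}$, and set $A_n = |f(n)|/[f(n)]_S$; throughout, the goal is to obtain lower bounds for $A_n$.

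For the ineffective bound with exponent $1/d + \eps$ (valid when $f$ is separable, so $r = d$ and every $h_i = 1$), I would invoke the $p$-adic Schmidt Subspace Theorem. The pairwise differences $\alpha_i - \alpha_j$ are nonzero constants, so at each non-archimedean $v \in S_K$ the ultrametric inequality $|\alpha_i - \alpha_j|_v \le \max(|n - \alpha_i|_v, |n - \alpha_j|_v)$ forces at most one factor $|n - \alpha_i|_v$ to fall below the $v$-adic size of the discriminant of $f$; the others are bounded below uniformly. Thus $[f(n)]_S / |f(n)|$ is essentially controlled by $\prod_v \min_i |n - \alpha_i|_v$, the product running over finite $v \in S_K$. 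Applying the Subspace Theorem to the system consisting of the linear forms $X - \alpha_{j(v)} Y$ at each such $v$ (with $\alpha_{j(v)}$ the $v$-adically closest root) together with $Y$ at archimedean places, evaluated at $(n, 1)$, yields that outside a finite exceptional set the product above is $\gg |n|^{-1 - \eps}$. Combined with $|f(n)| \asymp_f |n|^d$, this gives $[f(n)]_S \ll_{f,S,\eps} |f(n)|^{1/d + \eps}$.

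For the effective part, assume only that $f$ has two distinct roots $\alpha, \beta \in K$, and exploit the identity $(n - \alpha) - (n - \beta) = \beta - \alpha$. After enlarging $S$ within $K$ to $S_K$ (so as also to absorb the leading coefficient and the discriminant of $f$), factor $n - \alpha = \varepsilon_\alpha \gamma_\alpha$ and $n - \beta = \varepsilon_\beta \gamma_\beta$, with $\varepsilon_\bullet$ an $S_K$-unit and $\gamma_\bullet$ supported off $S_K$. The Weil heights of $\gamma_\alpha, \gamma_\beta$ are then $\ll_f \log A_n$. Dividing the identity by $n - \beta$ produces
\[
\frac{\varepsilon_\alpha \gamma_\alpha}{\varepsilon_\beta \gamma_\beta} - 1 = \frac{\beta - \alpha}{n - \beta},
\]
whose right-hand side is $\asymp_f |n|^{-1}$, and taking logarithms gives a nonzero linear form $\Lambda = \log(\varepsilon_\alpha/\varepsilon_\beta) + \log(\gamma_\alpha/\gamma_\beta)$ of the same size. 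Expanding $\varepsilon_\alpha/\varepsilon_\beta$ in a fixed basis of the $S_K$-unit group yields a linear form in logarithms of fixed algebraic numbers, with integer coefficients of size $\log|n|$, plus the extra term $\log(\gamma_\alpha/\gamma_\beta)$ having coefficient $\pm 1$ and height $\ll_f \log A_n$. This is precisely the setup for inequality~\eqref{eq4} of Theorem~\ref{lflog}, and for~\eqref{eq7} of Theorem~\ref{Yu} at the $p$-adic places, the role of $h^*(\alpha_n)$ being played by $\log A_n$. Matching the resulting lower bound on $|\Lambda|$ with the upper bound $|\Lambda| \asymp_f |n|^{-1}$ forces $\log A_n \gg_f \log|n|$, equivalently $A_n \gg_f |f(n)|^\tau$ for some effective $\tau > 0$.

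The main obstacle is this effective step: one must organize the linear form $\Lambda$ so that the $B'$-refinement applies, that is, so that the coefficient $b_n$ multiplying $\log(\gamma_\alpha/\gamma_\beta)$ is exactly $\pm 1$ and $h^*(\alpha_n) \asymp \log A_n$ grows with $n$. Without the factor $h^*(\alpha_n)$ in the denominator of $B'$, the bound from~\eqref{eq1} (or from the first assertion of Theorem~\ref{Yu}) is too weak to overcome the upper bound $|\Lambda| \asymp |n|^{-1}$, and would yield only a logarithmic saving $A_n \gg_f (\log|f(n)|)^c$. It is precisely the replacement $B \mapsto B/h^*(\alpha_n) \asymp \log|n|/\log A_n$ that converts this into the polynomial saving $|f(n)|^\tau$. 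Combining the simultaneous contributions from all finite places of $S_K$ with the archimedean contribution, rather than estimating place by place, to produce a single global lower bound on $A_n$ is the remaining technical point.
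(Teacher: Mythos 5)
The paper does not actually prove Theorem \ref{thm1a.1}: Subsection 3.3 opens with ``The results of this subsection have been established in \cite{BuEvGy18},'' and the surrounding discussion makes clear that proofs in Section \ref{sec:3} are deferred to the original sources. So there is no in-paper proof to compare against directly; what can be assessed is whether your sketch reconstructs the mechanism of \cite{BuEvGy18}, and it largely does.

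Your route is the expected one. The ineffective exponent $1/d+\eps$ via the $p$-adic Subspace Theorem --- exploiting that at each finite $v$ only one factor $n-\alpha_i$ can be $v$-adically small, and bounding the product of these minima --- is the standard argument. For the effective part, factoring $n-\alpha_i$ as ($S_K$-unit)$\times$(off-$S_K$ part of height $\ll \log A_n$) and dividing the identity $(n-\alpha)-(n-\beta)=\beta-\alpha$ is precisely the reduction to an $S$-unit equation $a_1x_1+a_2x_2=1$ with $h(a_i)\ll_f \max\{1,\log A_n\}$ and $h(x_i)\asymp_f\log|n|$, to which Theorem \ref{th:Sunit0} applies; the linearity of that bound in $\max\{h(a_1),h(a_2),1\}$ is exactly where $B'$ enters, as the paper emphasizes right after Theorem \ref{th:Sunit0}.

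Two corrections. First, the invocation of \eqref{eq7} of Theorem \ref{Yu} at the $p$-adic places is a misstep: once the $S_K$-adic information has been packaged into $\gamma_\alpha,\gamma_\beta$, the archimedean inequality \eqref{eq4} applied to $\Lambda=\log\bigl((n-\alpha)/(n-\beta)\bigr)$, whose modulus is $\asymp_f|n|^{-1}$, already forces $\log A_n\gg_f\log|n|$. The ratio $(n-\alpha)/(n-\beta)$ is in fact $p$-adically \emph{far} from $1$ for $p\in S$ with $v_p(n-\beta)$ large, so \eqref{eq7} contributes nothing here; the $p$-adic places only enter through the choice of $S_K$-unit factorization, not through a second application of Baker's theory. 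Second, the claim $h(\gamma_\alpha),h(\gamma_\beta)\ll_f\log A_n$ hides the real technical work: one must pass to a class-number power to make the off-$S_K$ ideal principal, choose a generator of controlled height, and relate the norm of that ideal to $A_n$ through the decomposition of $f$ into irreducible factors over $\Q$. These are standard lemmas (and appear in \cite{EvGy15,BuEvGy18}), but they are the part of the argument your sketch treats as free, and they are where the effective constants $\tau$ and the $\ll_f$ actually come from.
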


Bennett, Filaseta,    
and Trifonov \cite{BeFiTr08,BeFiTr09} have obtained stronger effective results for the 
polynomials $X(X+1)$ and $X^2 + 7$ and special sets $S$. 

Note that  there are infinitely many primes $p$, and for each of these
$p$, there are infinitely many integers $n$, such that
$f(n)\not= 0$ and 
\[
[f(n)]_{\{ p\}}\gg_f |f(n)|^{1/d}.
\]

We now formulate an analogue of Theorem \ref{thm1a.1} for
binary forms. Denote by $\zprim$ the set of pairs $(x,y)$ in $\Z^2$ with
$\gcd (x,y)=1$.

\begin{theorem}\label{thm1b.1}
Let $F(X,Y)$ be a binary form of degree $d \geq 2$ with integer coefficients.
Let $S$ be a non-empty set of primes. 
If the discriminant of $F(X,Y)$ is nonzero, then, 
for every $\eps >0$ and every pair $(x,y)$ in $\zprim$, 
we have
\[
[F(x,y)]_S\ll_{F,S,\eps} |F(x,y)|^{(2/d)+\eps}.
\]
If $F$ has at least three pairwise non-proportional linear factors over its splitting field, then
\[ 
[F(x,y)]_S \ll_F |F(x, y)|^{1-\tau}       
\]
for every $(x,y)$ in $\zprim$, 
where 
$\tau$ is an effectively computable positive number, 
depending only on $F$ and $S$.
\end{theorem}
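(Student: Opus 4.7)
The theorem has two independent assertions, which I would prove by different methods: the $p$-adic Schmidt Subspace Theorem for the ineffective bound, and a Siegel-identity reduction to Baker-type estimates for the effective one, with the role of the $B'$ refinement being decisive.

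For the ineffective assertion, factor $F(X,Y)=a_0\prod_{i=1}^{d}(X-\alpha_i Y)$ over the splitting field $K$ (the roots being distinct by the nonzero-discriminant hypothesis). For $(x,y)\in\zprim$ of height $H:=\max\{|x|,|y|\}$, one has $|F(x,y)|\asymp_F H^d$, and $[F(x,y)]_S$ equals, up to a bounded multiplicative constant, a product over $\ell\in S$ and places $v$ of $K$ above $\ell$ of $|F(x,y)|_v^{-1}$. Assembling the archimedean linear forms $X-\alpha_i Y$ (together with $X$ and $Y$) on the one hand, and the analogous $v$-adic linear forms at each bad place on the other, one forms a double product which is bounded above by $H^{-\eps d+o(1)}$ whenever $[F(x,y)]_S > |F(x,y)|^{(2/d)+\eps}$. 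The $p$-adic Subspace Theorem then forces $(x,y)$ into one of finitely many proper linear subspaces of $\Q^2$; on each such subspace the problem reduces to Theorem \ref{thm1a.1} applied to the polynomial $f(n)=F(\alpha n,\beta n)$ of degree $d$, yielding the exponent $(2/d)+\eps$.

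For the effective assertion, pick three pairwise non-proportional linear factors of $F$ over $K$; after rescaling (and possibly swapping $X\leftrightarrow Y$) they take the form $L_i(X,Y)=X-\alpha_i Y$ with $i=1,2,3$ and satisfy the Siegel identity
$$
(\alpha_2-\alpha_3)L_1 + (\alpha_3-\alpha_1)L_2 + (\alpha_1-\alpha_2)L_3 = 0.
$$
Setting $\lambda_i := L_i(x,y)$, this identity furnishes a three-term relation between the $\lambda_i$. Suppose for contradiction that $[F(x,y)]_S \ge |F(x,y)|^{1-\tau}$. Then, up to bounded factors, each $\lambda_i$ must be ``almost an $S$-unit'' in $K$: its ideal factorization is concentrated at primes above $S$, with the co-part having $v$-adic absolute value only polynomially smaller than $|\lambda_i|_v$ at bad places. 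The Siegel identity, divided by $(\alpha_1-\alpha_2)\lambda_3$, then yields an approximate $S$-unit equation to which one applies both the archimedean bound \eqref{eq4} and the $p$-adic bound \eqref{eq7}. In the resulting linear form $b_1\log\beta_1+\cdots+b_n\log\beta_n$ one can arrange $\beta_n$ to have height comparable to $\log H$ (essentially $h^*(\lambda_3)$) with corresponding exponent $b_n=\pm 1$; the $B'$-refinement then replaces $\log B$ by $\log(B/h^*(\beta_n))$, saving a factor $\log H$ and contradicting the starting assumption for $H$ large enough.

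The main obstacle is the bookkeeping needed to express the ratios $\lambda_i/\lambda_j$ as products of integer powers of fixed algebraic numbers in such a way that at least one factor of height comparable to $\log H$ appears with exponent exactly $\pm 1$: only under this condition can the crucial gain coming from $B'$ (i.e., the denominator $h^*(\alpha_n)$ in \eqref{eq4} and \eqref{eq7}) be harvested. Without this refinement one would obtain only a bound of the form $[F(x,y)]_S < |F(x,y)|\,(\log|F(x,y)|)^{-c}$, not strong enough for the stated conclusion, in direct analogy with the discussion of Feldman's effective improvement on Liouville's theorem in Subsection \ref{ssec1}.
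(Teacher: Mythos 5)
The paper does not give a proof of Theorem~\ref{thm1b.1}; it simply cites \cite{BuEvGy18}, where the result (and its extension to decomposable forms) is established. So there is no ``paper proof'' against which to compare directly. That said, your sketch, evaluated on its own, contains a real error in the ineffective part and is only an outline for the effective part.

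The problem with the ineffective argument is your claim that $|F(x,y)|\asymp_F H^d$ for primitive $(x,y)$ with $H=\max\{|x|,|y|\}$. Only the upper bound $|F(x,y)|\ll_F H^d$ is true. The lower bound fails badly: for the Pell form $F(X,Y)=X^2-2Y^2$ there are infinitely many $(x,y)\in\zprim$ with $|F(x,y)|=1$ and $H\to\infty$, and more generally, for $(x,y)$ close to a root ray one can have $|F(x,y)|$ polynomially smaller than $H^d$. This is not a cosmetic slip: the whole delicacy in the statement is that the bound is in terms of $|F(x,y)|$, not $H$, and the exponent $2/d$ (rather than the $1/d$ of the polynomial case in Theorem~\ref{thm1a.1}) reflects precisely the possibility that one factor $x-\alpha_i y$ is very small. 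Your reduction to the Subspace Theorem needs to isolate the possibly small archimedean factor and treat it together with the $S$-adic ones in the double product; collapsing $|F(x,y)|$ to $H^d$ hides the case where the conclusion is hardest and the exponent $2/d$ is actually attained. The subsequent reduction of the exceptional subspaces to the polynomial case is fine.

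The effective part of your proposal is the right idea in outline: pick three pairwise non-proportional linear factors, use the Siegel identity to produce a three-term $S$-unit-type relation among $\lambda_1,\lambda_2,\lambda_3$, and then apply \eqref{eq4} and \eqref{eq7} so that the factor of height $\asymp\log H$ enters with exponent $\pm 1$, harvesting the $B'$ saving. You correctly identify both the mechanism and the fact that without $B'$ one would only lose a $\log$-factor, not gain a fixed power. But as you yourself say, the ``bookkeeping'' -- writing $\lambda_i/\lambda_j$ as a product of $S'$-unit generators times a bounded set of ideal representatives times a factor of controlled height appearing with exponent one -- is the entire content of the proof; the sketch as given does not carry it out, and it is exactly here (controlling the non-$S'$ ideal factorization of the $\lambda_i$ using the hypothesis $[F(x,y)]_S\ge|F(x,y)|^{1-\tau}$ and the product formula) that the hypotheses ``at least three non-proportional linear factors'' and ``$(x,y)$ primitive'' are used. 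So the effective half is plausible but incomplete, while the ineffective half as written has a genuine gap.
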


Note that there are finite sets of primes $S$ with 
the smallest prime in $S$ being arbitrarily large,
and, for each one of these sets $S$, infinitely many pairs $(x,y)$ in $\zprim$, such that
$F(x,y)\not= 0$ and
\[
[F(x,y)]_S\gg_{F,S} |F(x,y)|^{2/d}.
\] 

Theorem \ref{thm1b.1} extends to a class of decomposable form equations, see \cite{BuEvGy18}.

\subsection{Power sums}

Apparently, arithmetical properties of the sequence of integers of the form $2^m + 6^n + 1$ were first 
discussed by Corvaja and Zannier in \cite{CoZa05}, where they showed, by a 
clever use of the Schmidt Subspace Theorem, that it contains only finitely many squares. 
We prove a result related to a special case of \cite[Theorem 4.3]{Bu18a}, namely 
we bound from above the $S$-part of $2^m + 6^n + 1$ by applying 
Theorems \ref{lflog} and \ref{Yu}.

\begin{theorem} \label{2m6n} 
Let $S$ be a finite, non-empty set of prime numbers. 
Let $a, b$ be integers with $a \ge 2, b \ge 2$. 
Then, for every $\eps > 0$, we have
$$
[a^m + b^n + 1]_S \le (a^m + b^n + 1)^\eps, 
$$
if $m+n$ is sufficiently large. 
Furthermore, if $\gcd(a, b) > 1$, then there exist  
effectively computable positive numbers $\tau$ and $n_0$, depending only on $a, b$, and $S$, such that 
$$
[a^m + b^n + 1]_S \le (a^m + b^n + 1)^{1 - \tau}, \quad \hbox{for $m+n \ge n_0$.}
$$
\end{theorem}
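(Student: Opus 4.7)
The plan is to handle the two assertions separately. For the first (ineffective) bound, I would apply the $p$-adic Schmidt Subspace Theorem to the three-term equation $a^m + b^n + 1 = M$ viewed as an $S'$-unit relation with $S' = S \cup \{p : p \mid ab\}$, following the style of Corvaja--Zannier~\cite{CoZa05}. Non-degeneracy of the geometric progressions $a^m, b^n$ combined with the Subspace Theorem's conclusion that exceptional solutions lie in finitely many proper subspaces forces $[M]_S \le M^\eps$ for $m + n$ sufficiently large.

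For the effective second assertion, I would suppose by contradiction that $[M]_S > M^{1-\tau}$ for $m + n$ large. Without loss of generality $a^m \ge b^n$, so $\log M = m \log a + O(1)$. I would partition $S$ as $T \sqcup S_a \sqcup S_b \sqcup S_0$ according to whether $p$ divides $\gcd(a, b)$, only $a$, only $b$, or neither. For $p \in T$, the key use of $\gcd(a,b) > 1$: since $p \mid a^m + b^n = M - 1$, one has $v_p(M) = 0$. For $p \in S_a \cup S_b$, two-term $p$-adic linear forms in logarithms (Theorem \ref{BuLa}, or Theorem \ref{Yu} with $|b_n| = 1$ invoking the $B'$-refinement) bound $v_p(b^n + 1)$ or $v_p(a^m + 1)$ by $O(\log(m+n))$; case analysis on the relative sizes of $v_p(a^m)$ and $v_p(b^n+1)$ yields either $v_p(M) = O(\log(m+n))$ or constraints forcing both $m$ and $n$ bounded.

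The crucial case is $p \in S_0$. Since $v_p(M) \ge 1$ together with $v_p(a^m) = 0$ force $v_p(b^n + 1) = 0$, the integer $-1 - b^n$ is a $p$-adic unit, and
$$
v_p(M) \;=\; v_p\!\bigl(a^m (-1 - b^n)^{-1} - 1\bigr).
$$
Applying Theorem \ref{Yu}'s alternative \eqref{eq6}/\eqref{eq7} with $\alpha_1 = a$, $\alpha_2 = -1 - b^n$, $b_2 = -1$ (so $|b_2| = 1$, triggering the $B/h^*(\alpha_n)$ refinement) yields either $m \le C_p \, n \log b$ or $v_p(M) \le c_p \, n \log b \cdot \log\max\{3, m/(n \log b)\}$. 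In both subcases $m/(n \log b)$ is bounded, so $v_p(M) \log p = O(n \log b)$. Summing over all $p \in S$ then gives $\log [M]_S \le C_S n \log b + O(\log(m+n))$ with an effective constant $C_S = C_S(a, b, S)$; combined with $n \log b \le m \log a = \log M + O(1)$, this is $\le (1 - \tau_0)\log M$ for an effective $\tau_0 = \tau_0(a, b, S) > 0$. Choosing $\tau < \tau_0$ yields the desired contradiction.

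The hardest step is the $p \in S_0$ analysis: despite the three-term sum $a^m + b^n + 1$ not directly fitting into the framework of linear forms in two logarithms, the observation that $-1 - b^n$ is a $p$-adic unit allows reduction to a two-term linear form whose last coefficient has absolute value $1$. The growing height $h^*(-1 - b^n) \sim n \log b$ would otherwise be prohibitive; it is the $B'$-refinement, the central theme of this paper, that cancels this height against the large exponent $m$ via the factor $B/h^*(\alpha_n)$, converting an otherwise trivial estimate into one with an effective saving.
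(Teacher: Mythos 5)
Your first-assertion sketch (Subspace Theorem) matches the paper's, which also omits the proof. For the effective second assertion, however, your argument has a genuine gap and is also structured quite differently from the paper's.

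The paper does not try to bound $v_p(M)\log p$ for each $p\in S$. It writes $M = a^m+b^n+1 = A\,\ell_1^{r_1}\cdots\ell_s^{r_s}$ (the $S$-decomposition), assumes WLOG $A\le M^{1/2}$, and proves $\max\{m,n\}\ll_{a,b,S}\log(2A)$, hence $\log A\gg\log M$. The case split is on the relative size of $a^m$ and $b^n$, not on the partition of $S$. When one of $a^m,b^n$ dominates, an \emph{Archimedean} linear form \eqref{eq4} applied to $A\ell_1^{r_1}\cdots\ell_s^{r_s}\,a^{-m}-1$ (or $b^{-n}$) does the job, with the $B'$-saving coming from the factor $h^*(A)$. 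When $a^m$ and $b^n$ are comparable, the paper picks a prime $p_0\mid\gcd(a,b)$ and applies \eqref{eq8} $p_0$-adically to the same form $A\ell_1^{r_1}\cdots\ell_s^{r_s}-1$, using $v_{p_0}(a^m+b^n)\ge\min\{m,n\}\gg\max\{m,n\}$. That is the \emph{essential} use of $\gcd(a,b)>1$: it manufactures a large $p_0$-adic valuation for the linear form precisely when the Archimedean cases fail.

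Your $S_0$ analysis does not deliver a saving in this comparable regime. Applying Theorem \ref{Yu} to $a^m(-1-b^n)^{-1}-1$ with $\alpha_2=-1-b^n$ of height $\sim n\log b$ gives, via \eqref{eq7},
$$
v_p(M)\ \ll\ n\log b\cdot\log\max\Bigl\{3,\frac{m}{n\log b}\Bigr\}.
$$
When $a^m$ and $b^n$ are of comparable size, $m/(n\log b)$ is bounded, so this reads $v_p(M)\ll n\log b\sim\log M$ — exactly the trivial bound $v_p(M)\le\log_p M$, with no saving, and the implied constant need not be below $1$. The concluding sentence ``combined with $n\log b\le\log M$, this is $\le(1-\tau_0)\log M$'' is a non sequitur: from $\log[M]_S\le C_S\,n\log b$ and $n\log b\le\log M$ one cannot deduce $\log[M]_S\le(1-\tau_0)\log M$ unless $C_S<1$, which you have not arranged. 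Moreover, your only stated use of $\gcd(a,b)>1$ is to kill $v_p(M)$ for $p\in T=S\cap\{p:p\mid\gcd(a,b)\}$; but $T$ may well be empty, in which case that hypothesis is never invoked, yet it is indispensable in the comparable case. The missing idea is to exploit $\gcd(a,b)>1$ through the large $p_0$-adic valuation of $a^m+b^n$ for $p_0\mid\gcd(a,b)$, applied to the linear form built from the $S$-factorization of $M$ rather than to the two-term form $a^m(-1-b^n)^{-1}-1$.
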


\begin{proof} 
The first statement easily follows from the $p$-adic Schmidt Subspace Theorem. 
We omit the proof. 

We treat briefly the special case $a=2$, $b=6$. The general case goes along the same lines. Write
$$
2^m + 6^n + 1 = A \ell_1^{r_1} \ldots \ell_s^{r_s},
$$
with $\gcd(A, \ell_1 \ldots \ell_s) = 1$. Without loss of generality, we can assume 
that $A \le (2^m + 6^n + 1)^{1/2}$. Then, we have
$$
\max\{m, n\} \ll_S \max\{r_1, \ldots , r_s\} \ll_S \max\{m, n\}.
$$
We distinguish three cases. 

If $2^m \ge 6^{2n}$, then $A \ell_1^{r_1} \ldots \ell_s^{r_s} 2^{-m}$ is very close to $1$ and it follows from 
\eqref{eq4} that 
$$
m \ll_S \log (2A) \, \log \frac{m}{\log (2A)}.
$$
If $6^n \ge 2^{2m}$, then $A \ell_1^{r_1} \ldots \ell_s^{r_s} 6^{-n}$ is very close to $1$ and it follows from 
\eqref{eq4} that 
$$
n \ll_S \log (2A) \, \log \frac{n}{\log (2A)}.
$$
In both cases, we get
$$
\max\{m, n\} \ll_S \log (2A). 
$$
It remains for us to treat the case where
$2^m \le 6^{2n} \le 2^{4m}$. Then, $\min\{m, n\} \gg \max\{m, n\}$ and $A \ell_1^{r_1} \ldots \ell_s^{r_s}$ is $2$-adically very close to $1$. 
We derive from \eqref{eq8} that
$$
\max\{m, n\} \ll_S \log (2A) \, \log \frac{\max\{m, n\} }{\log (2A)}. 
$$
This completes the proof of the theorem. 
\end{proof}

A key ingredient in the proof of the second part of Theorem \ref{2m6n} is the assumption that $a$ and $b$ are not coprime. 
It allows us to combine Archimedean and non-Archimedean estimates. 
We do not see how to get a non-trivial upper bound for the $S$-part of $1 + 2^m + 3^n$. 
Difficulties occur when the integers $m, n$ are such that $2^m$ is close to $3^n$. 

\begin{problem}
For a prime number $p \ge 5$, give a non-trivial upper bound for 
$$
[1 + 2^m + 3^n]_{\{p \}}. 
$$
\end{problem}

\subsection{Sums of integral $T$-units}

Let $T$ be a finite set of prime numbers, whose intersection with $S$ is empty. 
We consider the set $\cN$ composed of all the integers whose prime divisors are in $T$. 

\begin{theorem}
Let $\eps$ be a positive real number. 
Let $T = \{q_1, \ldots , q_t\}$ be a finite set of prime numbers. 
Let $x, y$ be coprime integers whose prime divisors are in $T$. 
If $|x+y|$ is sufficiently large in terms of $\eps$, then
$$
[x + y]_S \le |x + y|^{\eps}.
$$
Furthermore, there exist effectively computable positive numbers $\tau$ and $X_0$, depending only on $S$ and $T$,  
such that
$$
[x + y]_S \ll_{S,T} |x+y|^{1 - \tau},
$$
if $|x + y| > X_0$. 
\end{theorem}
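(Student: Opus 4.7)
The plan is to combine a $p$-adic estimate bounding $[x+y]_S$ by a power of $\log H$, where $H := \max(|x|, |y|)$, with an Archimedean estimate forcing $H$ to be of essentially the same order as $|x+y|$. I may assume $x+y \ne 0$ (otherwise $[x+y]_S = 0$) and, by symmetry, $|x| \ge |y|$, so $H = |x|$. Write $x = \epsilon_1 \prod_{i=1}^t q_i^{a_i}$ and $y = \epsilon_2 \prod_{i=1}^t q_i^{b_i}$ with $\epsilon_1, \epsilon_2 \in \{-1, +1\}$; the coprimality hypothesis forces $\min(a_i, b_i) = 0$ for every $i$, so all exponents $a_i, b_i$ are at most $(\log H)/\log 2$.

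First I would bound the $S$-part in terms of $\log H$. For each $\ell \in S$, the hypothesis $\ell \notin T$ yields $v_\ell(y) = 0$ and hence $v_\ell(x+y) = v_\ell(1 + x/y)$. The quantity $-x/y$ is a product of integer powers of $-1, q_1, \ldots, q_t$ with exponents of absolute value $\ll \log H$, and $-x/y \ne 1$ since $x+y \ne 0$. The first inequality of Theorem \ref{Yu}, applied to the family $\{-1, q_1, \ldots, q_t\}$, then gives $v_\ell(x+y) \ll_{S, T} \log\log H$ for every $\ell \in S$, whence
$$
[x+y]_S = \prod_{\ell \in S} \ell^{v_\ell(x+y)} \ll_{S, T} (\log H)^{C_1}
$$
for some effective $C_1 = C_1(S, T)$.

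Next I would bound $H$ in terms of $|x+y|$ via Archimedean linear forms in logarithms. If $|x+y| \ge H/2$ there is nothing to prove. Otherwise $x$ and $y$ have opposite signs, $\beta := -y/x$ is a positive $T$-unit with $\beta \ne 1$ and $|\beta - 1| = |x+y|/H < 1/2$, and in particular $|\log \beta| \le 2|\beta - 1|$. The quantity $\log \beta = \sum_{i=1}^t (b_i - a_i) \log q_i$ is a nonzero linear form in the logarithms of $q_1, \ldots, q_t$ with integer coefficients of size $\ll \log H$, so \eqref{eq2} of Theorem \ref{lflog} gives
$$
|\log \beta| \gg_T (\log H)^{-C_2}
$$
for an effective $C_2 = C_2(T)$, and therefore $|x+y| \gg_T H \cdot (\log H)^{-C_2}$. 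Taking logarithms, $\log H \le 2 \log |x+y|$ as soon as $|x+y|$ exceeds an effective threshold depending only on $T$.

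Combining the two estimates,
$$
[x+y]_S \ll_{S, T} (\log |x+y|)^{C_1},
$$
which is at most $|x+y|^{1 - \tau}$ for any $\tau \in (0, 1)$, say $\tau = 1/2$, as soon as $|x+y|$ exceeds an effective $X_0 = X_0(S, T, \tau)$; this proves the effective statement, and the ineffective one with arbitrary $\eps > 0$ is a fortiori implied (it could alternatively be derived, more generally, from the $p$-adic Schmidt Subspace Theorem). The main obstacle lies in the second step: a Liouville-type inequality would give only $|x+y| \ge 1$ and hence no control of $H$ in terms of $|x+y|$ when the two summands almost entirely cancel, so Baker's theorem in the form \eqref{eq2} is genuinely indispensable.
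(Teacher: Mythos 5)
Your proof is correct, and it takes a genuinely different route that in fact yields a stronger conclusion than the one stated. The paper works $q_1$-adically at a single prime $q_1$ of $T$, chosen so that $q_1^B$ exactly divides $x$ (with $B$ comparable to $\log\max\{|x|,|y|\}$): writing $x\pm y=A\ell_1^{w_1}\cdots\ell_s^{w_s}$ and noting $v_{q_1}\bigl(A\ell_1^{w_1}\cdots\ell_s^{w_s}q_2^{-v_2}\cdots q_t^{-v_t}\mp1\bigr)=B$, it invokes the $B'$-refined dichotomy \eqref{eq6}--\eqref{eq7} of Theorem \ref{Yu} with the unknown cofactor $A$ as one of the $\alpha_i$'s, having coefficient $\pm1$. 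The division by $h^*(A)=\log A^*$ inside the logarithm is exactly what converts $B\ll(\log A^*)\log(B/\log A^*)$ into $B\ll\log A^*$ and hence $|A|\gg|x+y|^{c}$. You never introduce $A$: you bound $v_\ell(x+y)=v_\ell(-x/y-1)$ at each $\ell\in S$ by the plain first estimate of Theorem \ref{Yu} applied to the $T$-unit $-x/y$, whose heights are fixed constants depending on $T$, so no $B'$-type refinement is needed; and you neutralize possible near-cancellation of $x$ and $y$ separately, via an Archimedean estimate from Theorem \ref{lflog} giving $\log\max\{|x|,|y|\}\le 2\log|x+y|$ effectively. A byproduct of your route is the polylogarithmic bound $[x+y]_S\ll_{S,T}(\log|x+y|)^{C}$ for effective $C=C(S,T)$, which is strictly stronger than the power-saving $|x+y|^{1-\tau}$ the theorem asserts. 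Two minor remarks: in your second step \eqref{eq1} already suffices, since $B$ and $B'$ are of the same order of magnitude once the $h^*(q_i)$ are all bounded; and while your argument proves the theorem, it is somewhat at odds with the paper's expository aim, which is precisely to showcase the role of $B'$.
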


The first assertion is an easy consequence of Ridout's theorem \cite{Rid57}. 
It extends to sums of an arbitrary number of $S$-units, by means of the $p$-adic Subspace Theorem. 
The second assertion does not seem to have been stated previously. 

\begin{proof}
Write $x = q_1^{u_1} \ldots q_t^{u_t}$, $y = q_1^{v_1} \ldots q_t^{v_t}$, and 
$$
q_1^{u_1} \ldots q_t^{u_t} \pm q_1^{v_1} \ldots q_t^{v_t} = A \ell_1^{w_1} \ldots \ell_s^{w_s}, 
$$
where the integer $A$ is coprime with $\ell_1 \ldots \ell_s$. 
Assume that $|x + y| \ge 4$ and $|A| \le |x + y|^{1/2}$. 
Set 
$$
B = \max\{u_1, \ldots , u_t, v_1, \ldots , v_t\}, \quad 
W = \max\{w_1, \ldots , w_s\}, 
$$
and $A^* = \max\{|A|, 2\}$. 
Throughout the proof, the constants implicit in $\ll$ are effectively computable 
and depend at most on
$S$ and on $T$. Our assumption on $A$ implies that 
$$
B \gg \ll W. 
$$
Assume that $B = u_1$ and consider 
$$
\vv_{q_1} (A \ell_1^{w_1} \ldots \ell_s^{w_s} \pm q_1^{v_1} \ldots q_t^{v_t}). 
$$
It follows from Theorem \ref{Yu} that
$$
B \ll (\log A^*) \, \log \frac{B+W}{\log A^*}, 
$$
thus,
$$
B \ll \log A^*. 
$$
Setting $X = \max\{|x|, |y|, 2\}$, we get
$$
\log X \ll B \ll \log A^*,
$$
thus $X \ll 1$ if $|A| = 1$ and, if $|A| \ge 2$, then there exist positive effectively computable numbers 
$c_1, c_2, c_3$, depending on $S$ and $T$, such that 
$$
|A| \gg 2^{c_1 B} \gg 2^{c_2 W} \gg |x + y|^{c_3}. 
$$
This completes the proof. 
\end{proof} 

\subsection{Summary} 
In all the examples above, there is a big gap between ineffective and effective statements. 
The term $B'$ allows us to get a small, effective $\tau$-saving on the trivial bound.

\section{$S$-unit equations}   \label{sec:4}

Let $K$ be an algebraic number field.  
Many Diophantine problems reduce to equations of the form
$$
a_1 x_1 + a_2 x_2 = 1,
$$
where $a_1, a_2$ are given elements of $K$ and the unknowns $x_1, x_2$ are elements 
of its unit group $O_K^*$ or, more generally, of a group of $S$-units in $K$, where $S$ 
is a finite set of places on $K$ containing all the infinite places. 
Recall that an algebraic number $x$ in $K$ is an $S$-unit if, by definition, 
$|x|_v = 1$ for every place $v$ not in $S$. 

These equations are called {\it unit equations} and {\it $S$-unit equations}; see 
the monograph \cite{EvGy15} for explicit estimates of 
the height of their solutions and many bibliographic references.

\begin{theorem}   \label{th:Sunit0}
Let $K$ be an algebraic number field of degree $d$ and discriminant $D_K$. 
Let $S$ be a finite set of places on $K$ 
containing the infinite places.
Let $a_1, a_2$ be non-zero elements of $K$.
The equation
$$
a_1 x_1 + a_2 x_2 = 1, \quad \hbox{in $S$-units $x_1, x_2$ in $K$,}
$$
has only finitely many solutions, and all of them satisfy
$$
{\rm max}\{h(x_1), h(x_2)\} \ll_{d, D_K, S}  {\rm max} \{h(a_1 ), h(a_2 ), 1\}. 
$$
\end{theorem}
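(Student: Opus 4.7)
The plan is to reduce the $S$-unit equation to a small linear form in logarithms and invoke the refinement provided by the quantity $B'$ (that is, the estimates \eqref{eq4} and \eqref{eq7}). By the classical $S$-unit theorem, the group $O_S^*$ modulo torsion is free abelian of rank $s-1$, where $s = |S|$, and one may choose a system of fundamental $S$-units $\varepsilon_1, \ldots, \varepsilon_{s-1}$ with heights all $\ll_{d, D_K, S} 1$ (a classical effective refinement of Dirichlet's theorem). Writing any solution as
$$
x_i = \zeta_i \, \varepsilon_1^{b_{i,1}} \cdots \varepsilon_{s-1}^{b_{i,s-1}}, \qquad i = 1, 2,
$$
with $\zeta_i$ a root of unity in $K$, the quantity $B := \max_{i,j} |b_{i,j}|$ satisfies $B \ll_{d, D_K, S} H$ and $H \ll_{d, D_K, S} B$, where $H := \max\{h(x_1), h(x_2)\}$.

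Set $M := \max\{h(a_1), h(a_2), 1\}$. We may assume $H \ge C M$ for a sufficiently large constant $C$, and after relabelling that $h(x_1) = H$. Since $x_1$ is an $S$-unit, the product formula forces the existence of a place $v \in S$ at which $\log \max\{|x_1|_v, 1\} \gg H/s$, and therefore $\log |a_1 x_1|_v \gg_{d, D_K, S} H$. Rewriting the equation in the form
$$
-\,\frac{a_2 x_2}{a_1 x_1} - 1 \, = \, -\,\frac{1}{a_1 x_1},
$$
the left-hand side has $v$-adic size at most $|a_1 x_1|_v^{-1}$. Taking the appropriate (archimedean branch, or $v$-adic) logarithm of $-a_2 x_2 / (a_1 x_1)$ and expanding via the representation of $x_1, x_2$ above converts this into a linear form
$$
\Lambda = b_1 \log \alpha_1 + \cdots + b_n \log \alpha_n,
$$
in which $\alpha_n := -(a_2 \zeta_2)/(a_1 \zeta_1)$ carries coefficient $b_n = \pm 1$, while the remaining $\alpha_j$ range over the fundamental $S$-units (heights bounded in terms of $d, D_K, S$), with $\max_j |b_j| \le 2B$.

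The crucial point is that $b_n = \pm 1$ and $h^*(\alpha_n) \le h(a_1) + h(a_2) + O(1) \ll M$. Consequently the refined bound \eqref{eq4} (for archimedean $v$) or the dichotomy `\eqref{eq6} or \eqref{eq7}' (for finite $v$) applies with $B/h^*(\alpha_n) \ll H/M$. Combining with the lower bound $-\log |\Lambda|_v \gg H$ obtained above yields
$$
H \, \ll_{d, D_K, S} \, M \, \log(H/M),
$$
which forces $H \ll_{d, D_K, S} M$, as claimed; finiteness of the solution set then follows from the Northcott property in $K$.

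The main obstacle is the non-archimedean case, where Theorem \ref{Yu} offers only the alternative `\eqref{eq6} or \eqref{eq7}'. Fortunately, in the $|b_n|=1$ regime the alternative \eqref{eq6} yields $B \ll_{d, D_K, S} M$ directly and so is harmless. A subsidiary technical point is the careful choice of branch needed to identify $\log(-a_2 x_2/(a_1 x_1))$ with a genuine linear form in logarithms of algebraic numbers, but this is by now standard. Without the $B'$ refinement, the weaker estimate \eqref{eq1} alone would yield only $H \ll M \log H$, hence $H \ll M \log M$, falling short of the asserted linear dependence on $M$.
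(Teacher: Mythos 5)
Your proof is correct and takes exactly the route the paper indicates: reduce the $S$-unit equation to a linear form in logarithms over a fundamental system of $S$-units, isolate $-a_2\zeta_2/(a_1\zeta_1)$ as the sole term of unbounded height with coefficient $\pm 1$, and apply the $B'$ refinement (\eqref{eq4} in the archimedean case, the alternative \eqref{eq6}/\eqref{eq7} in the non-archimedean case) to obtain the bound linear in $\max\{h(a_1),h(a_2),1\}$ --- precisely the point the paper emphasizes immediately after the statement. The paper itself defers the detailed argument to \cite{EvGy15}, and your writeup fills in those standard details consistently with that remark.
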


The fact that the upper bound in \eqref{th:Sunit0} is linear in ${\rm max} \{h(a_1 ), h(a_2 ), 1\}$ is a 
consequence of term $B'$ in \eqref{eq2}. Using \eqref{eq1} instead, with $B$ in place of $B'$, 
we would get an extra factor $\log {\rm max} \{h(a_1 ), h(a_2 ), 2\}$ in the upper bound.

\subsection{Linear equations in two unknowns from a multiplicative division group}  

More generally, we can take an arbitrary finitely generated subgroup $\Gamma$ of positive rank of the multiplicative 
group $(\ovQ)^2 = \ovQ \times \ovQ$, endowed with coordinatewise multiplication, and consider
\begin{equation}  \label{SunitBEG}
a_1 x_1 + a_2 x_2 = 1, \quad
\hbox{in $(x_1, x_2) \in \Gamma$,}
\end{equation}
where $a_1, a_2$ are given nonzero complex algebraic numbers. 
We can compute explicit upper bounds for the heights of $x_1$ and $x_2$. 
We quote below important results from \cite{BEG09}, which deal with extensions of \eqref{th:Sunit0}. 

For $\bfx = (x_1, x_2)$ in $(\ovQ)^2$, define $h(\bfx) = h(x_1) + h(x_2)$. 
The division group of $\Gamma$ is the set
$$
\bG = \{ \bfx \in (\ovQ)^2 : \bfx^k \in \Gamma \, \, \hbox{for some $k$ in $\Z_{> 0}$} \}
$$
For $\eps > 0$, the cylinder around $\bG$ and the truncated cone around $\bG$ are given by 
$$
\bG_\eps = \{  \bfx \in (\ovQ)^2 : \exists \, \bfy, \bfz \hbox{ with } \bfx = \bfy \bfz, \bfy \in \bG, \bfz \in (\ovQ)^2, h(\bfz) < \eps\}
$$
and
$$
C(\bG, \eps) = \{  \bfx \in (\ovQ)^2 : \exists \, \bfy, \bfz \hbox{ with } \bfx = \bfy \bfz, \bfy \in \bG, \bfz \in (\ovQ)^2, 
h(\bfz) < \eps (1 + h(\bfy)) \},
$$
respectively. 
We stress that the points in $\bG, \bG_\eps$ or $C(\bG, \eps)$ do not have their coordinates in a prescribed number field.

Nevertheless, it is possible to bound the height of a solution $(x_1, x_2)$ in $(\ovQ)^2$ to the equation 
$$
a_1 x_1 + a_2 x_2 = 1
$$ 
which belongs to $\Gamma$, to $\bG_\eps$ or to $C(\bG, \eps)$, provided that $\eps$ is sufficiently small. 
Below, $K_0$ denotes the algebraic number field generated by $a_1, a_2$, and the elements of $\Gamma$,
and $h_0$ is an upper bound for the heights of the components of a system of generators of $\Gamma$ 
modulo torsion. Furthermore, $r$ is the rank of $\Gamma$ and $A$ is an effectively computable 
positive real number, which depends only on $\Gamma$. The reader is referred to \cite{BEG09} 
for an explicit expression of $A$. 

\begin{theorem}  \label{thBEG}
Every solution $(x_1, x_2)$ in $\Gamma$ of \eqref{SunitBEG} satisfies 
$$
h(x_1, x_2) < A \max\{h(a_1), h(a_2), 1\}. 
$$
Suppose that $(x_1, x_2)$ is a solution to
$$
a_1 x_1 + a_2 x_2 = 1, \quad
\hbox{in $\bfx = (x_1, x_2)$ in $\bG$ or in $\bG_\eps$ with $\eps < 0.0225$.}
$$
Then we have
$$
h(\bfx) \le A h(a_1, a_2) + 3 r h_0 A
$$
and
$$
[K_0 (x_1, x_2) : K_0] \le 2.
$$
Suppose that $(x_1, x_2)$ is a solution to
$$
a_1 x_1 + a_2 x_2 = 1, \quad
\hbox{in $\bfx = (x_1, x_2)$ in $C(\bG, \eps)$,}
$$
and that 
$$
\eps < \frac{0.09}{8 A h(a_1,a_2) + 20 r h_0 A}. 
$$
Then we have
$$
h(\bfx) \le 3 A h(a_1, a_2) + 5 r h_0 A
$$
and
$$
[K_0 (x_1, x_2) : K_0] \le 2.
$$
\end{theorem}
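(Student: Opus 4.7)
The plan is to derive all three statements from a single master height estimate produced by Baker's theory of linear forms in logarithms, in which the $B'$ sharpening of \eqref{eq2} is decisive for the bound being linear in $\max\{h(a_1),h(a_2),1\}$, and then to propagate this estimate through the chain $\Gamma\subset\bG\subset\bG_\eps,\,C(\bG,\eps)$.

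First I would handle the basic case $(x_1,x_2)\in\Gamma$. Fix generators $\gamma_1,\ldots,\gamma_r$ of $\Gamma$ modulo torsion with coordinate heights bounded by $h_0$, and write $x_i=\zeta_i\prod_{j=1}^r\gamma_{j,i}^{b_j}$ with $B:=\max_j|b_j|$. From $a_1x_1+a_2x_2=1$ there exists a place $v$ at which $|a_2x_2|_v$ is small, so $\log(a_1x_1)=\log(1-a_2x_2)$ is itself small at $v$; the left side is a linear form $\Lambda$ in logarithms of the $\gamma_{j,i}$, of $a_1$, and of roots of unity, with integer coefficients of size at most $B$. Choosing $\alpha_n$ in \eqref{eq2} to be whichever of $a_1,a_2$ has largest height yields $\log|\Lambda|\ge -c\log(B/\max\{h(a_1),h(a_2),1\})$, which, combined with the upper bound $\log|\Lambda|\ll -B$ coming from the equation, forces $B\ll\max\{h(a_1),h(a_2),1\}$. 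A parallel $p$-adic argument at each finite place that matters (governed by the generators of $\Gamma$), using \eqref{eq8}, disposes of the non-Archimedean contributions, and combining the two gives $h(\bfx)<A\max\{h(a_1),h(a_2),1\}$ with an effective $A$ depending only on $\Gamma$.

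To transfer this to the division group $\bG$, I would argue via Galois action. Given $\bfx=(x_1,x_2)\in\bG$, some positive integer power $(x_1^N,x_2^N)$ lies in $\Gamma$; since coordinates of $\Gamma$-elements lie in the fixed field $K_0$, each Galois conjugate $\sigma x_i$ over $K_0$ satisfies $(\sigma x_i)^N=x_i^N$, so $\sigma x_i=\omega^{(i)}_\sigma x_i$ for some $N$-th roots of unity $\omega^{(i)}_\sigma$. The Galois orbit of $\bfx$ therefore yields a family of new solutions to the same equation, twisted by roots of unity, and a short linear-algebra argument (analogous to the classical Beukers--Schlickewei unit lemma) shows that three such twisted solutions would force $\bfx$ to satisfy two inconsistent linear relations. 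Hence $[K_0(x_1,x_2):K_0]\le 2$, and applying the $\Gamma$-estimate to $(x_1^N,x_2^N)$ and descending to $\bfx$ gives the bound with the additive correction $3rh_0A$ absorbing the $N$-th root contributions from the generators.

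Finally, for $\bG_\eps$ and $C(\bG,\eps)$ I would use the $\bfy\bfz$-factorization built into their definitions. Writing $\bfx=\bfy\bfz$ with $\bfy\in\bG$ and $h(\bfz)$ small, substitution turns the equation into $(a_1z_1)y_1+(a_2z_2)y_2=1$, a $\bG$-equation whose coefficients have heights at most $h(a_1,a_2)+h(\bfz)$. The $\bG$-estimate bounds $h(\bfy)$, and $h(\bfx)\le h(\bfy)+h(\bfz)$ yields the claimed bounds; the explicit thresholds $0.0225$ and $0.09/(8Ah(a_1,a_2)+20rh_0A)$ on $\eps$ are precisely what is required to keep the perturbation sub-dominant so that the iteration closes with the stated linear dependence on $h(a_1,a_2)$. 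The principal obstacle throughout is the simultaneous control of degree and height in the division-group step: the linear-forms estimate yields the required linear-in-$h(a_1,a_2)$ bound only when $B'$ rather than $B$ is used, and one must ensure that the Galois-theoretic argument does not reintroduce a hidden dependence on the denominator $N$, which is why the degree bound $[K_0(x_1,x_2):K_0]\le 2$ must be proved \emph{first}, as a structural fact, and then fed into the height estimate.
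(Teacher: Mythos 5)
The paper does not prove this theorem: it is quoted from \cite{BEG09}, and the only hint about the proof that the paper gives is that ``a key auxiliary result in the proof of Theorem \ref{thBEG} is a lemma of Beukers and Zagier \cite{BeZa97}, which gives a lower bound for the sum of the heights of three distinct algebraic points with non-zero coordinates lying on a line''. Your proposal never uses this height lower bound; you instead appeal to a ``Beukers--Schlickewei unit lemma'', which is not the tool the paper or its source invokes, and you do not explain what it is or why it yields the degree bound. That mismatch already signals a missing idea, and it is corroborated by the appearance of the \emph{fixed} numerical threshold $\eps<0.0225$ in the statement: a threshold that does not depend on $A$, $h(a_1,a_2)$ or $r$ cannot come from merely ``keeping the perturbation sub-dominant'' as you assert; it is exactly the kind of absolute constant one obtains from the Beukers--Zagier height lower bound.

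There are two concrete gaps beyond this. First, your passage from $\bG$ to $\Gamma$ by ``applying the $\Gamma$-estimate to $(x_1^N,x_2^N)$'' does not work: $(x_1^N,x_2^N)$ does not satisfy the linear equation $a_1X+a_2Y=1$, so the $\Gamma$-case result is simply not applicable to it. Second, the degree bound $[K_0(x_1,x_2):K_0]\le 2$ is genuinely delicate in the $\bG_\eps$ and $C(\bG,\eps)$ cases and your argument does not reach it. For $\bfx\in\bG$, the Galois conjugates over $K_0$ are indeed root-of-unity twists of $\bfx$, and a careful elimination among three such twisted solutions does lead to a contradiction (one is forced to the equality of the two twisting roots of unity, whence $a_1x_1+a_2x_2=0$), though the mechanism is not ``two inconsistent linear relations'' as you write. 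But for $\bfx=\bfy\bfz\in\bG_\eps$ the conjugates involve conjugates of $\bfz$, which are \emph{not} root-of-unity twists, so this Galois argument breaks down. Your proposed substitution $(a_1z_1)y_1+(a_2z_2)y_2=1$ changes the base field to $K_0(z_1,z_2)$, so at best it gives a degree bound over that larger field, which controls neither $[K_0(\bfx):K_0]$ nor the constant $A$ (which depends on $K_0$). This is precisely where the Beukers--Zagier lemma enters in the original proof, and its absence from your sketch is the essential gap.
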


Here also, the fact that the upper bounds for $h(\bfx)$ are linear in $h(a_1, a_2)$ is a 
consequence of term $B'$ in \eqref{eq2}. 
A key auxiliary result in the proof of Theorem \ref{thBEG} is a lemma 
of Beukers and Zagier \cite{BeZa97}, which gives a lower bound for the sum of the 
heights of three distinct algebraic points with non-zero coordinates lying on a line 
$\lambda x + \mu y + \nu z = 0$ with $\lambda \mu \nu = 0$.

\subsection{Unit equations on quaternions}


Let $\HH = \R \oplus \R \rmi \oplus \R \rmj \oplus \R \rmk$ denote the quaternion algebra $\HH$ over $\R$, 
with the standard multiplication law $\rmi^2 = \rmj^2 = \rmk^2 = -1$, $\rmi \rmj = -\rmj \rmi = \rmk$, 
$\rmj \rmk = -\rmk \rmj = \rmi$, $\rmk \rmi = -\rmi \rmk = \rmj$. For an element $\alpha = a + b \rmi +
c \rmj + d \rmk$ in $\HH$, where $a, b, c, d$ are in $\R$, define its conjugate to be 
$\alpha = a - b \rmi - c \rmj - d \rmk$, its norm to be
$$
N(\alpha) = \alpha \alphabar = \alphabar \alpha = a^2 + b^2 + c^2 + d^2,
$$ 
and its trace 
$$
\tr(\alpha) = \alpha + \alphabar = 2a. 
$$
Write $|\alpha| = \sqrt{N(\alpha)}$.
We say that a quaternion $\alpha = a + b \rmi + c \rmj + d \rmk$ in $\HH$ is algebraic if all its coordinates $a, b, c, d$ 
are algebraic over $\Q$. This is equivalent to requiring that $\alpha$ satisfies a polynomial equation with coefficients
in $\Q$, or that $\Q[\alpha]$ is a finite field extension of $\Q$. Indeed, $\alpha$ always satisfies the quadratic equation
$$
X^2 - \tr(\alpha) X + N(\alpha) = 0
$$
and if $a, b, c, d$ are in $\Q$, then so are $\tr(\alpha)$ and $N(\alpha)$. 

Denote by $\HH_a^\times$ the subalgebra of all quaternions that are algebraic. The next statement 
has been proved by Huang \cite[Theorem 1.2]{Hu20}. 

\begin{theorem} 
Let $\Gamma_1, \Gamma_2$ be semigroups of $\HH_a^\times$ 
generated by finitely many elements of norms greater
than $1$, and fix $a, a', b, b'$ in $\HH_a^\times$. 
Then the equation
$$
afa' + bgb' = 1
$$
has only finitely many solutions $(f, g)$ in $\Gamma_1 \times \Gamma_2$ such that 
$|1 - afa'| \not= |afa'|$. 
Moreover, for such pairs $( f, g)$, we have effectively computable upper bounds for $| f |$, $|g|$ 
that depend only on $a, a', b, b'$
and generators of $\Gamma_1, \Gamma_2$. 
\end{theorem}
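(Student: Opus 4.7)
The plan is to reduce the noncommutative equation to a single linear form in logarithms of algebraic real numbers and to invoke \eqref{eq1} from Theorem~\ref{lflog}. Set $u = afa'$ and $v = bgb'$, so that the equation becomes $u + v = 1$ in~$\HH$. From $v = 1-u$ one computes $N(v) = N(1-u) = 1 - \tr(u) + N(u)$, giving the scalar identity $\tr(u) = 1 + N(u) - N(v)$; in particular, the hypothesis $|1-u| \ne |u|$, which is $N(v) \ne N(u)$, is equivalent to $\tr(u) \ne 1$. Because norms are multiplicative on~$\HH$, one has $N(u) = N(a)\,N(a')\,N(f)$ and $N(v) = N(b)\,N(b')\,N(g)$, where $N(f)$ lies in the finitely generated multiplicative semigroup of algebraic real numbers generated by the norms $N(\alpha_1), \ldots, N(\alpha_p)$ of the generators of~$\Gamma_1$, all strictly greater than~$1$; the same holds for~$N(g)$ and the generators of~$\Gamma_2$.

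The geometric input comes from the reverse triangle inequality in~$\HH$, namely $\bigl|\, |u| - |v| \,\bigr| \le |u+v| = 1$. One may assume that $M := \max\{|u|, |v|\}$ is large, since otherwise $|f|$ and $|g|$ are already bounded effectively. Then $|u|$ and $|v|$ are both close to~$M$, and
$$
\Bigl| \frac{N(u)}{N(v)} - 1 \Bigr| = \frac{\bigl|\, |u|^2 - |v|^2 \,\bigr|}{|v|^2} \le \frac{|u|+|v|}{|v|^2} \ll \frac{1}{M}.
$$
Writing $f$ and $g$ as words in the given generators with multiplicities $m_1, \ldots, m_p$ and $n_1, \ldots, n_q$ respectively, and setting $C := N(a)\,N(a') / (N(b)\,N(b'))$, one obtains
$$
\Lambda := \log C + \sum_{i=1}^{p} m_i \log N(\alpha_i) - \sum_{j=1}^{q} n_j \log N(\beta_j) = \log \frac{N(u)}{N(v)},
$$
a linear form in logarithms of algebraic numbers which is nonzero thanks to $N(u) \ne N(v)$, and which satisfies $|\Lambda| \ll 1/M$.

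The main obstacle is bridging the analytic bound $|\Lambda| \ll 1/M$ with the archimedean size of the integer coefficients. Since every $N(\alpha_i)$ and $N(\beta_j)$ is strictly greater than~$1$, the word lengths of $f$ and $g$ are each $\ll \log N(f) + \log N(g) \ll \log M$, and hence so is $B := \max\{3, m_i, n_j\}$. An application of \eqref{eq1} to the nonzero linear form~$\Lambda$ yields an effective lower bound of the shape $\log|\Lambda| \ge -c \log B$, where $c$ depends only on the heights of $N(\alpha_i), N(\beta_j), C$ and the degree of the number field they generate, hence only on $a, a', b, b'$ and the generators of $\Gamma_1, \Gamma_2$. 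Combining this with $|\Lambda| \ll 1/M$ forces $\log M \le c \log\log M + O(1)$, whence $M$ is bounded by an effectively computable constant, and so are $|f| = |u|/(|a|\,|a'|)$ and $|g| = |v|/(|b|\,|b'|)$. Finiteness of the solution set then follows from the fact that, each generator of $\Gamma_i$ having norm strictly greater than~$1$, only finitely many words in these generators can produce elements of norm at most any prescribed bound.
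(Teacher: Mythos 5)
Your argument is correct, and it is the expected route: reduce via the multiplicative norm $N$ to the identity $N(1-u) = 1 - \tr(u) + N(u)$, use the Euclidean reverse triangle inequality to get $|N(u)/N(v) - 1| \ll 1/M$, and then bound the resulting single real linear form $\log C + \sum m_i \log N(\alpha_i) - \sum n_j \log N(\beta_j)$ from below by \eqref{eq1}, using that each generator has norm $> 1$ to control $B$ by $\log M$. The paper does not reproduce Huang's proof but its remark --- that Huang applies \eqref{eq1} and that substituting \eqref{eq2} would make the bound linear in $\max\{|a|,|a'|,|b|,|b'|\}$, which is exactly what happens because $C = N(a)N(a')/(N(b)N(b'))$ enters the linear form with coefficient $1$ --- confirms your reduction is the same as the one in \cite{Hu20}.
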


By inspecting the proof in \cite{Hu20}, we see that an application of \eqref{eq2} (instead of \eqref{eq1} used in \cite{Hu20}) 
yields an upper bound for $|f|, |g|$ which is linear in $\max\{|a|, |a'|, |b|, |b'|\}$.

\section{Almost powers in integer sequences}  \label{sec:5}

As in Section \ref{sec:3}, we let $\cN$ be a sequence of positive integers defined in some natural way.  
Assume that the theory of linear forms in logarithms allows us to compute an upper bound for the prime number $q$ 
such that $y^q$ is in $\cN$ for some integer $y \ge 2$. This is for example the case when
$$
\cN = \{ n(n+1) : n \ge 1\} \quad \hbox{and} \quad 
\cN = \{2^n + 3^n : n \ge 1\}.
$$
In some situations, the proof can be modified to take advantage of $B'$ 
in order to show that, more generally, $A y^q$ cannot be in $\cN$ for $y \ge 2$, a prime number
$q$ sufficiently large and a rational number $A$ of sufficiently small height in terms of~$q$.

We consider three examples below. Other interesting equations, which can be treated in a 
similar way, include $2^m + 2^n + 1 = A y^q$ and sums of two integral $T$-units 
being almost powers, see \cite{BeBi17}.

\subsection{Almost powers in values of polynomials}

The next result has been established in \cite{BBMOS}.

\begin{theorem}
Let $f(X)$ be an integral polynomial with at least two distinct roots. 
If there are integers $x, y, q$ and a rational number $A$ such that $|y| \ge 2$ and 
$$
f(x) = A y^q,
$$
then 
$$
q \ll_f \max\{1, h(A)\}. 
$$
\end{theorem}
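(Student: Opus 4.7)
The plan is to reduce the bound $q \ll_f h^*(A)$ to a two-term linear form in logarithms in which the large exponent $q$ multiplies a base of small height, so that the $B'$-refinement \eqref{eq4} yields the linear dependence on $h^*(A)$. First, I dispose of easy cases: if $|x|$ is bounded in terms of $f$, then $|f(x)| \le C_f$ and $|y| \ge 2$ give $q \log 2 \le \log|f(x)| - \log|A| \ll_f h^*(A)$; if $q$ itself is bounded, the conclusion is trivial. Writing $q = q_0 q_1$ with $q_0$ a bounded divisor of $q$ and $q_1$ coprime to the class number $h_K$ of the splitting field $K$ of $f$ and to all multiplicities of the roots of $f$, I may replace $y$ by $y^{q_0}$ and thus assume $q$ itself has these coprimality properties. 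Finally, fix two distinct roots $\alpha \ne \beta$ of $f$, with multiplicities $e_\alpha, e_\beta$, and an embedding $K \hookrightarrow \C$ for which $|x - \alpha|$ is maximal, so that $|x - \alpha| \gg_f |x|$.

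For every prime ideal $\mathfrak{p}$ of $O_K$ outside a finite set $S_f$ depending only on $f$ (the primes dividing the leading coefficient, the discriminant, and the differences of roots of $f$), the ideals $(x - \alpha_i)$ are pairwise coprime at $\mathfrak{p}$, giving
$$e_\alpha\, v_\mathfrak{p}(x - \alpha) = v_\mathfrak{p}(A) + q\, v_\mathfrak{p}(y).$$
Since $\gcd(e_\alpha, q) = 1$, this determines $v_\mathfrak{p}(x - \alpha) \bmod q$ from $v_\mathfrak{p}(A)$. A standard argument using fixed representatives of $\mathrm{Cl}(K)$ (permissible because $q \nmid h_K$) together with Dirichlet's unit theorem then yields factorizations
$$x - \alpha = \gamma\, \eta^q, \qquad x - \beta = \gamma'\, \eta'^q \qquad (\gamma, \gamma', \eta, \eta' \in K^*),$$
with $h(\gamma), h(\gamma') \le C_1 h^*(A) + C_2$, where $C_1, C_2$ depend only on $f$. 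Subtracting and using $(x - \alpha) - (x - \beta) = \beta - \alpha$ gives
$$\frac{\gamma'}{\gamma}\left(\frac{\eta'}{\eta}\right)^q - 1 = -\frac{\beta - \alpha}{\gamma\, \eta^q},$$
whose right-hand side is of order $|x|^{-1}$ because $|\gamma\, \eta^q| = |x - \alpha| \gg_f |x|$.

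For appropriate branches, the linear form $\Lambda = q\, \log(\eta'/\eta) + \log(\gamma'/\gamma)$ is nonzero (as $\alpha \ne \beta$) and satisfies $\log|\Lambda| \le -\log|x| + O_f(1)$. Applying \eqref{eq4} with $n = 2$, $b_1 = q$, $b_2 = 1$, $\alpha_1 = \eta'/\eta$, $\alpha_2 = \gamma'/\gamma$ gives a lower bound whose crucial factor is $\log \frac{q+3}{h^*(\gamma'/\gamma)}$. Combined with $h^*(\eta'/\eta) \ll (\log|x|)/q$ (from $|\eta|^q \asymp |x|$) and $h^*(\gamma'/\gamma) \ll_f h^*(A)$, the comparison becomes
$$\log|x| \le c_f\, \frac{\log|x|}{q}\, h^*(A)\, \log \frac{q+3}{h^*(A)} + O_f(1).$$
Dividing by $\log|x|$ (large, by assumption) produces $q \le c'_f\, h^*(A)\, \log(q/h^*(A))$, which forces $q \ll_f h^*(A)$.

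The main obstacle is the height control in the ideal factorization step: the ideal carrying the non-$q$-th-power part of $(x - \alpha)$ need not be principal, so producing $\gamma$ requires multiplying by fixed class-group representatives (feasible because $q \nmid h_K$) and then choosing, via Dirichlet's unit theorem, a unit multiple whose archimedean embeddings are as balanced as possible. Each operation contributes only $O_f(1)$ to $h(\gamma)$, but the bookkeeping, together with the treatment of repeated roots, has to be done carefully to secure the linear bound $h(\gamma) \ll_f h^*(A)$. The use of \eqref{eq4} rather than \eqref{eq1} is essential: the latter would give only $q \ll_f h^*(A) \log q$, which cannot be iterated into a clean linear bound in $h^*(A)$.
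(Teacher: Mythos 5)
The paper does not reproduce a proof of this theorem; it is quoted from the preprint \cite{BBMOS}, with only the remark that replacing \eqref{eq2} by \eqref{eq1} in the proof weakens the bound to $q \ll_f h^*(A)\log h^*(A)$. Your overall strategy — produce a two-term linear form in which the large exponent $q$ multiplies a quantity of small height, and then use the $B'$ refinement \eqref{eq4} to turn the comparison $q \ll H\log(q/H)$ into $q \ll H \ll_f h^*(A)$ — is the right mechanism and almost certainly coincides with the heart of the BBMOS argument.

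There is, however, a genuine gap in your reduction step. You write $q = q_0 q_1$ with ``$q_0$ a bounded divisor of $q$'' and $q_1$ coprime to $h_K$ and to all multiplicities of roots of $f$, and then replace $y$ by $y^{q_0}$. This is false in general: if, say, $2$ divides $h_K$ and $q = 2^k$, then any divisor $q_1$ of $q$ coprime to $h_K$ is $1$, forcing $q_0 = q = 2^k$, which is not bounded; replacing $y$ by $y^{q_0}$ would then collapse the equation to exponent $1$ and prove nothing. The same difficulty arises when $q$ shares a large power of a small prime with one of the multiplicities $e_\alpha$. The ideal-factorization step producing $x-\alpha = \gamma\,\eta^q$ with $h(\gamma) \ll_f h^*(A)$ really does require the coprimality (otherwise $[\,\mathfrak{c}\,]^q = [\,\mathfrak{d}\,]^{-1}$ need not have a solution of bounded height in $\mathrm{Cl}(K)$, and the fixed-representative trick leaves a factor $\mathfrak{a}^q$ or $\rho^{q/h_K}$ of height growing with $q$ that cannot be absorbed), so the coprimality is not a harmless normalization.

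What is actually needed to handle composite $q$ is an auxiliary height bound on the solutions, not merely a bound on the exponent. For instance, if $q = p^k$ with $p$ a fixed small prime, one sets $z = y^{q/p}$, treats $f(x) = A z^p$ as a superelliptic equation with bounded exponent, and uses an effective bound $\log|z| \ll_{f,p} h^*(A)$ (again via the $B'$ refinement) together with $\log|z| = (q/p)\log|y| \ge (q/p)\log 2$ to obtain $q \ll_f h^*(A)$. In your comparison, $\log|x|$ cancels out and you only recover $q \ll H\log(q/H)$, so your framework by itself does not furnish the height bound that is essential for this branch of the argument. This, rather than the bookkeeping around class-group representatives and units that you already flag, is the missing idea.
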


By using \eqref{eq1} instead of \eqref{eq2} in the proof, one gets the weaker estimate 
$q \ll_f \max\{1, h(A)\} \log \max\{2, h(A)\}$. 


\subsection{Almost powers in recurrence sequences}

The next statement deals with recurrence sequences having a dominant root. 

\begin{theorem}
Let ${\bf u} := (u_n)_{n \ge 0}$ be a recurrence sequence of integers given by
$$
u_n = f_1(n) \alpha_1^n + \ldots + f_t(n) \alpha_t^n, \quad \hbox{for $n \ge 0$},
$$
and having a dominant root $\alpha_1$. Assume that $\alpha_1$ is a simple root, that is,
the polynomial $f_1(X)$ is equal to a non-zero algebraic number $f_1$. 
Then, the equation
$$
u_n = A y^q,  
$$
in rational $A$ and integers $n, y, q$ with $q$ a prime number, 
$|y| \ge 2$, and $u_n \not= f_1 \alpha_1^n$, implies that 
$$
q \ll_{{\bf u}} \, \max\{1, h(A)\}. 
$$
\end{theorem}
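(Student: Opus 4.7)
First I would decompose $u_n = f_1\alpha_1^n + R_n$ with $R_n := \sum_{i \ge 2}f_i(n)\alpha_i^n$. The dominance of $\alpha_1$ yields $|R_n| \ll_{{\bf u}} n^\kappa |\alpha_2|^n$ for some fixed $\kappa \ge 0$; the hypothesis $u_n \ne f_1\alpha_1^n$ reads $R_n \ne 0$; and since $\alpha_1$ is the unique root of maximal modulus of an integer polynomial, $\alpha_1 \in \R$, which combined with ${\bf u}$ integer-valued forces $f_1 \in \R$ as well. Substituting $u_n = Ay^q$ into $u_n/(f_1\alpha_1^n) = 1 + R_n/(f_1\alpha_1^n)$ and taking $\log|\cdot|$ of both sides, the linear form
\[
\Lambda := q\log|y| + \log|A| - n\log|\alpha_1| - \log|f_1|
\]
is nonzero for every sufficiently large $n$ and satisfies $|\Lambda| \ll_{{\bf u}} \rme^{-\delta n}$ for any fixed $\delta$ with $0 < \delta < \log(|\alpha_1|/|\alpha_2|)$.

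To obtain the matching lower bound, I would apply estimate \eqref{eq4} of Theorem \ref{lflog} to $\Lambda$, regarded as a linear form in the logarithms of the four positive real algebraic numbers $|y|,|A|,|\alpha_1|,|f_1|$, placing $|A|$ in the role of $\alpha_n$ so that $|b_n|=1$ and $h^*(\alpha_n)=h^*(A)$. Since $h^*(|\alpha_1|)$ and $h^*(|f_1|)$ are $O_{{\bf u}}(1)$ and $B=\max\{3,n,q\}=n$ for $n$ large, \eqref{eq4} gives $\log|\Lambda| \ge -C\,h^*(y)\,h^*(A)\log(n/h^*(A))$, with $C$ depending only on ${\bf u}$. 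Combined with the exponential upper bound on $|\Lambda|$, this produces
\[
\delta n \le C\,h^*(y)\,h^*(A)\,\log(n/h^*(A)). \qquad (\star)
\]
An elementary comparison based on $|y|^q = |u_n|/|A|$ and $|u_n| \asymp_{{\bf u}} |\alpha_1|^n$ produces $\bigl|\,q\log|y| - n\log|\alpha_1|\,\bigr| \le h(A) + O_{{\bf u}}(1)$, which will be used to eliminate either $n$ or $q$ from $(\star)$.

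When $|y|=2$, one has $h^*(y) = 1$ and $(\star)$ reads $\delta n \le C\,h^*(A)\log(n/h^*(A))$; since $\log t$ grows more slowly than $t$, this forces $n/h^*(A) \le T_0({\bf u})$, and the comparison then yields $q \ll_{{\bf u}} h^*(A)$. Had we used \eqref{eq1} in place of \eqref{eq4}, the factor $\log(n/h^*(A))$ would have become $\log n$, giving only $q \ll_{{\bf u}} h^*(A)\log\max\{2, h^*(A)\}$ and so identifying the denominator $h^*(\alpha_n)$ in $B'$ as the source of the saving. The main obstacle is the case $|y|\ge 3$, in which $h^*(y) = \log|y| \asymp n/q$, so that $(\star)$ collapses to $\delta q \le C\,h^*(A)\log(n/h^*(A))$ and a naive bootstrap gives only $q \ll_{{\bf u}} h^*(A)\log\log|y|$; to eliminate this residual factor I would follow the pattern of the proof of Theorem \ref{2m6n} and complement $(\star)$ by a $p$-adic bound arising from Theorem \ref{Yu} applied to the identity $Ay^q - f_1\alpha_1^n = R_n$ at a prime $p$ dividing $y$, exploiting $\vv_p(Ay^q) \ge q\,\vv_p(y) - h(A)$ together with the $B'$-type estimate \eqref{eq7} (or \eqref{eq8}), and expect the combination of the Archimedean bound $(\star)$ with this $p$-adic analogue to yield $q \ll_{{\bf u}} \max\{1, h(A)\}$.
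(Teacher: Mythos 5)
Your Archimedean decomposition keeps $\alpha_1$ raised to the power $n$, which forces $B=\max\{3,q,n\}=n$ and is precisely why the estimate collapses for unbounded $|y|$. The paper's proof avoids this from the start: it writes $n=kq+r$ with $|r|\le q/2$ and regroups the approximant as
$$
\Lambda=\Bigl|\,A\,f_1^{-1}\Bigl(\frac{y}{\alpha_1^{k}}\Bigr)^{q}\alpha_1^{-r}-1\,\Bigr|,
$$
so the exponent vector is $(1,1,q,-r)$ and $B$ drops to $q$. The price is that $h^*(y/\alpha_1^{k})\ll\log|y|$, but that is exactly the right trade-off: \eqref{eq4} then yields $\log\Lambda\gg -h^*(A)\,(\log|y|)\,\log\bigl(q/h^*(A)\bigr)$, and dividing the resulting inequality $q\log|y|\ll_{\bf u} n\ll_{\bf u} h^*(A)(\log|y|)\log\bigl(q/h^*(A)\bigr)$ by $\log|y|$ gives $q\ll h^*(A)\log(q/h^*(A))$, which bootstraps to $q\ll h^*(A)$ with no case distinction on the size of $y$. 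This regrouping is the one idea your proposal is missing; your version effectively has $B'\asymp n/h^*(A)$ where the correct choice has $B'\asymp q/h^*(A)$.

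The $p$-adic patch you sketch for $|y|\ge 3$ does not fill the gap. If $p\mid y$, then $\vv_p(Ay^q)\ge q\,\vv_p(y)+\vv_p(A)$ is large, but the quantity to which Yu's theorem would apply, namely $\vv_p\bigl(Ay^{q}f_1^{-1}\alpha_1^{-n}-1\bigr)=\vv_p\bigl(R_n/(f_1\alpha_1^n)\bigr)$, is \emph{not} forced to be large: precisely because $\vv_p(Ay^q)$ exceeds $\vv_p(f_1\alpha_1^n)$, the identity $R_n=Ay^q-f_1\alpha_1^n$ gives $\vv_p(R_n)=\vv_p(f_1\alpha_1^n)$, so the ratio has valuation zero and Yu's theorem returns nothing. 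Bounding $\vv_p(u_n)$ itself from above is not available either, since $u_n$ is a sum of $t$ terms and is not of the shape $\alpha_1^{b_1}\cdots\alpha_m^{b_m}-1$ unless $t=2$ with a constant $f_2$, which is not assumed. In short, the $p$-adic route is a dead end here, and the case $|y|\ge 3$ needs the Euclidean-division regrouping rather than a non-Archimedean complement.
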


\begin{proof}
We establish a slightly more general result. 
We consider a sequence of nonzero integers ${\bf v} := (v_n)_{n \ge 0}$ 
with the property that there are $\theta$ in $(0, 1)$ and 
a positive real number $C$ such that
$$
| v_n - f \alpha^n | \le C \, |\alpha|^{\theta n},  \quad n \ge 0, 
$$
where $f$ is a non-zero algebraic number
and $\alpha$ is an algebraic number with $|\alpha|>1$.

Let $n, y, q$ with $q$ a prime number 
be such that $v_n = A y^q$, $|y| \ge 2$, and $v_n \not= f \alpha^n$. Then, we have 
\begin{equation}  \label{eq3.10} 
|A f^{-1} \alpha^{-n} y^q - 1| \le {C \over |f|}  \, |\alpha|^{(\theta- 1) n}.   
\end{equation}
There exist integers $k$ and $r$ such that $n = k q + r$ with $|r| \le {q \over 2}$. 
Rewriting \eqref{eq3.10} as
$$
\Lambda := \Bigl| A f^{-1} \Bigl({y \over \alpha^k} \Bigr)^q \alpha^{-r} - 1 \Bigr| 
\le {C \over |f|} \, |\alpha|^{(\theta- 1) n}, 
$$
we observe that 
\begin{equation}  \label{eq3.11} 
\log \Lambda \ll_{\bf v} \, (- n ).  
\end{equation}
The height of ${\alpha^k \over y}$ is bounded from above by the sum of 
$\log |y|$ and $k$ times the height of $\alpha$. Since
\begin{equation}  \label{eq3.12} 
q \log |y|  \ll_{\bf v} \,  n   \le  (k+1) q
\quad \hbox{and} \quad
{k q \over 2} \le  n   \ll_{\bf v} \, q \log| y|,     
\end{equation}
we get that $h(\alpha^k / y) \ll_{\bf v} \, \log |y|$. 
Set $h^* (A) = \max\{1, h(A)\}$. 
It then follows from \eqref{eq4} that
$$
\log \Lambda \gg_{\bf v} \,  - h^* (A) \, (\log |y|) \Bigl(\log \frac{q}{h^* (A)} \Bigr),
$$
which, combined with \eqref{eq3.11} and \eqref{eq3.12}, gives 
$$
q \log |y| \ll_{\bf v} \, n \ll_{\bf v} \, h^* (A) \, (\log |y|) \Bigl(\log \frac{q}{h^* (A)} \Bigr),
$$
and we get $q \ll_{\bf v} \, h^* (A)$. This proves the theorem.    
Note that \eqref{eq5} does not enable us to conclude. 
\end{proof}

\subsection{Almost powers in power sums}   \label{appsum}
We provide another result on Diophantine properties of the sequence 
of integers of the form $2^m + 6^n + 1$. 

\begin{theorem} 
All the solutions to the Diophantine equation
$$
2^m + 6^n + 1 =  A  y^q,
$$
in positive integers $m, n, q$ and $A$ rational, satisfy
$$
q \ll \max\{1, h(A)\}. 
$$
\end{theorem}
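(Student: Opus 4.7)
The plan is to extend the strategy from the proof of Theorem \ref{2m6n}, incorporating the rewrite trick used above in the proof for almost powers in recurrence sequences. I would first split into three cases according to the relative sizes of $2^m$ and $6^n$, as in the proof of Theorem \ref{2m6n}: namely (i) $2^m \ge 6^{2n}$, (ii) $6^n \ge 2^{2m}$, and (iii) $2^m \le 6^{2n} \le 2^{4m}$. Without loss of generality one may assume $\log|A| \le \tfrac{1}{2} q \log|y|$, since otherwise $h^*(A) \gg q$ directly; under this assumption $\log(A y^q) \asymp q \log|y|$ is comparable to $m \log 2$ in case (i), to $n \log 6$ in case (ii), and to $\min\{m, n\}$ (up to a constant factor) in case (iii).

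In case (i), writing $m = k_1 q + r_1$ with $0 \le r_1 < q$, the quantity $\Lambda = A (y/2^{k_1})^q 2^{-r_1} - 1$ satisfies $\log|\Lambda| \le -\tfrac{1}{2} m \log 2 + O(1)$ since $|\Lambda| \le (6^n + 1)/2^m \le 2 \cdot 2^{-m/2}$. I would apply \eqref{eq4} with $\alpha_1 = 2$, $\alpha_2 = y/2^{k_1}$, $\alpha_3 = A$ and coefficients $-r_1, q, 1$; the choice $|b_3| = 1$ and $h^*(\alpha_3) = h^*(A)$ produces the denominator $h^*(A)$ in the final logarithm. Since $k_1 \le m/q \ll \log|y|$ one has $h^*(\alpha_2) \ll \log|y|$, so the resulting estimate
$$
m \log 2 \ll \log|y| \cdot h^*(A) \cdot \log(q/h^*(A)),
$$
combined with $m \log 2 \asymp q \log|y|$, yields $q \ll h^*(A) \log(q/h^*(A))$, hence $q \ll h^*(A)$. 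Case (ii) is entirely analogous, with $n = k_2 q + r_2$ and $6$ in place of $2$.

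In case (iii), the Archimedean approach fails because neither $2^m$ nor $6^n$ dominates; I would instead use the $2$-adic valuation, exploiting that $\vv_2(A y^q - 1) = \vv_2(2^m + 6^n) \ge \min\{m, n\} \gg q \log|y|$. After normalizing so that $\vv_2(A) = \vv_2(y) = 0$ via $y = 2^e y'$ and $A = 2^{-eq} A'$ (a rewriting that preserves $A y^q$ and does not increase $h(A)$ or $h(y)$), and assuming $y'$ and $1/A'$ are multiplicatively independent, Theorem \ref{BuLa} applied with $\alpha_1 = y'$, $\alpha_2 = 1/A'$, $b_1 = q$, $b_2 = 1$ gives
$$
q \log|y| \ll \vv_2(A y^q - 1) \ll h^*(y) \, h^*(A) \, \bigl( \log \max\{3, q/h^*(A)\} \bigr)^2,
$$
whence $q \ll h^*(A) (\log(q/h^*(A)))^2$, and therefore $q \ll h^*(A)$.

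The main obstacle will be case (iii): the argument calls for a $p$-adic analogue of the term $B'$, and the cleanest such analogue (with only a single logarithmic factor) is precisely the content of the open problem stated just after Theorem \ref{BuLa}. Theorem \ref{BuLa} is nevertheless strong enough to carry the argument, at the cost of an extra $(\log B')^2$ factor, which is harmless because the inequality $x \ll (\log x)^2$ still forces $x = O(1)$. A secondary technical issue is the multiplicative-dependence edge case $(y')^a = (A')^b$: such a relation forces $A$ to be, up to sign and a power of $2$, a rational power of $y$, and the specialization of the equation $A y^q = 1 + 2^m + 6^n$ can then be analyzed directly to yield $q \ll h^*(A)$ without any linear forms in logarithms.
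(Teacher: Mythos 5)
Your proposal is correct and takes essentially the same approach as the paper: a case split according to the relative sizes of $2^m$ and $6^n$, with the Archimedean estimate \eqref{eq4} (whose $B'$-type denominator $h^*(A)$ is produced exactly as you describe, via $b_n=1$ and $\alpha_n=A$) in the two unbalanced cases, and the $2$-adic two-logarithm bound of Theorem \ref{BuLa} in the balanced case $2^m \le 6^{2n} \le 2^{4m}$, where the extra $(\log B')^2$ is indeed harmless. The details you make explicit — the reduction to $\log|A|\le\tfrac12 q\log|y|$, stripping the $2$-part of $y$ before invoking Theorem \ref{BuLa}, and the multiplicative-dependence degeneracy — are precisely the steps the paper leaves implicit.
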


\begin{proof}
As in the proof of Theorem \ref{2m6n}, 
we distinguish three cases. If $2^m \ge 6^{2n}$ or if $6^n \ge 2^{2m}$, then
\eqref{eq4} implies that 
$$
q (\log y) \ll \max\{m, n\} \le h^* (A) (\log y) \log \frac{q}{h^* (A)}, 
$$
where $h^* (A) = \max\{1, h(A)\}$. 
We conclude that $q \ll \log h^*(A)$. 

Now, if $2^m \le 6^{2n} \le 2^{4m}$, then $m \gg \ll n$ and
Theorem \ref{BuLa} gives that 
$$
q \log y \ll m \ll h^* (A) \, (\log y) (\log (q / h^* (A)))^2.
$$
In both cases, we get that $q \ll h^* (A)$, as asserted. 
\end{proof}

We may try to get one step further. 
Let $p \ge 5$ be a prime number and consider the equation
$$
2^m + 6^n + 1 =  A  p^{\ell} y^q,
$$
Without any restriction we assume that $\ell$ satisfies $|\ell| < q$. 
The cases $2^m \ge 6^{2n}$ and $6^n \ge 2^{2m}$ go as above and yield that 
$$
q \ll  (\log p) (\log \log p) h^* (A). 
$$
Assume now that $2^m \le 6^{2n} \le 2^{4m}$. 
Theorem \ref{Yu} gives that either
\begin{equation}  \label{eqSix}
q \ll h^* (A) (\log p) (\log y),
\end{equation} 
or
$$
q \log y \ll m \ll h^* (A) (\log p) (\log y) (\log (q / h^* (A))).
$$
Unfortunately, we cannot deduce anything from \eqref{eqSix}, when $y$ is assumed to be variable. 
If one prefers to apply \eqref{eq8}, then one gets
$$
q \log y \ll m \ll h^* (A) (\log p) (\log y) (\log (q (\log y)/ h^* (A)),
$$
which does not yield a bound for $q$ independent of $y$.

\section{Arithmetical properties of convergents}  \label{sec:6}

In this section, 
$\theta$ is an arbitrary irrational, 
real number and $(p_n (\theta) / q_n (\theta))_{n \ge 1}$ 
(we will use the shorter notation $p_n / q_n$ when there is no confusion possible 
and $\xi$ instead of $\theta$
if the number is known to be algebraic) denotes
the sequence of its convergents. In particular, we have 
$$
\Bigl| \theta - \frac{p_n (\theta)}{q_n(\theta)} \Bigr| < \frac{1}{q_n (\theta)^2}, \quad n \ge 1.
$$
Recall that, by Legendre's theorem, every rational number $p/q$ 
such that $|\theta - p/q| < 1 / (2 q^2)$ is a convergent to $\theta$.

\begin{theorem}\label{ShoS}
Let $\eps$ be a positive real number.
Let $S$ be a finite, non-empty set of distinct prime numbers. 
Let $(p_n / q_n)_{n \ge 1}$ denote the sequence of convergents to an irrational 
real algebraic number $\xi$. 
Then, we have 
$$
[p_n]_S <  p_n^{\eps},  \qquad [q_n]_S <  q_n^{\eps}, 
$$
for every sufficiently large integer $n$. 
Furthermore, there exist effectively computable positive numbers $\tau$ and $n_0$, depending 
only on $\xi$ and $S$, such that
$$
[p_n q_n]_S \le (p_n q_n)^{1 - \tau}, \quad \hbox{for $n\ge n_0$}.
$$ 
Moreover, we have
$$
P[p_n q_n] \gg_\xi  \log \log q_n \cdot
{\log \log \log q_n \over \log \log \log \log q_n}, \quad n \ge 2. 
$$
\end{theorem}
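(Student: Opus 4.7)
The starting point is that the convergents satisfy $|\xi q_n - p_n| < 1/q_n$, so, writing
\[
\Lambda := \log(p_n/(q_n \xi))
\]
and assuming $\xi > 0$ (the other sign is handled symmetrically), one has $\log|\Lambda| \le -2 \log q_n + O_\xi(1)$. For the first assertion, the $p$-adic Schmidt Subspace Theorem, applied in the standard way (as in the proof of Theorem \ref{srl}), gives $[p_n]_S \, [q_n]_S < q_n^\eps$ for every $\eps > 0$ and every sufficiently large $n$, so both $[p_n]_S < p_n^\eps$ and $[q_n]_S < q_n^\eps$ follow.

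For the second (effective) assertion, decompose $p_n = p_n^* [p_n]_S$ and $q_n = q_n^* [q_n]_S$, with $p_n^*, q_n^*$ coprime to every prime of $S$, so that
\[
\Lambda = \log(p_n^*/q_n^*) + \sum_{\ell \in S}\bigl(\vv_\ell(p_n) - \vv_\ell(q_n)\bigr)\log \ell - \log \xi
\]
becomes a linear form in $s+2$ logarithms with coefficient $1$ attached to $\log(p_n^*/q_n^*)$. Assume for contradiction that $[p_n q_n]_S > (p_n q_n)^{1-\tau}$ for a $\tau > 0$ to be chosen; then $p_n^* q_n^* < (p_n q_n)^\tau$ and consequently $H := h^*(p_n^*/q_n^*) \le \max\{1, 2\tau \log q_n\}$. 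Apply equation \eqref{eq4} of Theorem \ref{lflog} with $\alpha_n = p_n^*/q_n^*$ when $H > 1$ (and otherwise to the shorter $(s+1)$-term form obtained by dropping the vanishing $\log(p_n^*/q_n^*)$ term) to get
\[
\log|\Lambda| \ge -C_{\xi, S} \, H \log(B/H),
\]
where $B = \max_i|b_i| \ll \log q_n$ and $C_{\xi, S}$ is an effectively computable constant absorbing the product of the other heights and the $n$-dependent prefactor. Since $H \log(B/H) \le 2\tau|\log\tau| \log q_n$ on $[1, 2\tau\log q_n]$ for $\tau \le 1/e$, combining with the direct estimate forces $(1 - C_{\xi, S}\tau|\log\tau|)\log q_n \le O_\xi(1)$. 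Choosing $\tau = \tau(\xi, S)$ small enough that $C_{\xi, S}\tau|\log\tau| \le 1/2$ then yields the desired contradiction once $q_n$ exceeds an effectively computable threshold $n_0(\xi, S)$, of order $\exp\bigl(\Theta(C_{\xi, S}\log C_{\xi, S})\bigr)$.

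For the third assertion, set $P := P[p_n q_n]$ and $S_P := \{\ell\text{ prime}:\ell \le P\}$, so that $[p_n q_n]_{S_P} = p_n q_n$; the second assertion then forces $q_n \le n_0(\xi, S_P)$. To extract the iterated-logarithmic bound, one must replace equation \eqref{eq4} by Matveev's refinement \eqref{eq5}, which avoids the factor $c_4^n n^{3n}$ that would be disastrous once $n = s+2 = \pi(P)+2$ grows; a routine estimate then gives $C_{\xi, S_P} = \exp\bigl(O(P\log\log P/\log P)\bigr)$. Inverting $\log\log q_n \gtrsim \log C_{\xi, S_P} \sim P\log\log P/\log P$ iteratively in $P$ yields
\[
P \gg_\xi \log\log q_n \cdot \frac{\log\log\log q_n}{\log\log\log\log q_n},
\]
as claimed. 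The principal obstacle is the careful bookkeeping of the $s$-dependence of the Baker-type constant as $s = \pi(P)$ grows with $P$; Matveev's improvement of the $n$-dependence is precisely what makes the iterated-log bound come out as stated.
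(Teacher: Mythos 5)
Your proof follows essentially the same route as the paper's: you form a linear form in $s+2$ logarithms, attach coefficient $1$ to the ``coprime part'' $p_n^*/q_n^*$ (the paper's $A_q/A_p$) so that the refinement \eqref{eq4}/\eqref{eq5} with the $B/h^*(\alpha_n)$ saving applies, and then track the $s$-dependence of the constant to get the iterated-logarithm lower bound on $P[p_nq_n]$. The paper organizes things slightly differently: the effective assertions are deduced from the more general Theorem~\ref{qqS}, which applies to an arbitrary denominator $q$ with $p=\lfloor q\xi\rfloor$ (using only $|q\xi-p|<1$, not the convergent property), and \eqref{eq5} is used from the start. Your choice of \eqref{eq4} for the second assertion is harmless, since for fixed $S$ the factor $c_4^n n^{3n}$ is absorbed into $C_{\xi,S}$; and you correctly identify that \eqref{eq5} is indispensable for the third assertion, which is exactly the point the paper emphasizes (``we see the importance of the dependence on $s$''). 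These are cosmetic differences; the core argument is the same.

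Two small points of care that do not affect correctness but are worth keeping in mind: your bound $H\log(B/H)\le 2\tau|\log\tau|\log q_n$ on $[1,2\tau\log q_n]$ silently absorbs a $\log C'$ term (with $B\le C'\log q_n$) into the $|\log\tau|$; this is fine once $\tau$ is taken small in terms of $C_{\xi,S}$, but one should either say this or fold it into $C_{\xi,S}\tau|\log\tau|$. Second, when you write ``$q_n\le n_0(\xi,S_P)$,'' the quantity that is actually bounded is $q_n$ itself (the threshold in the theorem is an index, but what the proof produces is a bound on $\log q_n$ of order $C_{\xi,S_P}\log C_{\xi,S_P}$, which is what you invert); the way you state the inversion, $\log\log q_n \lesssim \log C_{\xi,S_P}$, is the correct reading.
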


The first assertion is a direct consequence of Ridout's theorem; see \cite{Sho77}. 
The last one has been established in \cite{Bu09}, thereby slightly improving a result of \cite{Sho77}. 
The second assertion is new and follows directly from Theorem \ref{qqS} below.
No non-trivial effective upper bounds for $[p_n]_S$ and $[q_n]_S$ are known for 
every algebraic number of degree at least $3$. 

Erd\H{o}s and Mahler \cite{ErMa} established that, when $\theta$ is not a Liouville number (that is, when the irrationality 
exponent of $\theta$ is finite),  then 
$P[q_{n-1} q_n q_{n+1}]$ tends to
infinity with $n$; see also \cite{BuKh23}. 
However, their result is not effective.
Using Baker's theory of linear forms in logarithms, Shorey \cite{Sho83}
proved that 
$$
P[q_{n-1} q_n q_{n+1}] \gg_\theta \log \log q_n.
$$
We can do slightly better by applying Theorem \ref{Yu}.

\begin{theorem} \label{ShoBis}
Let $\theta$ be a real number, which is not a Liouville number. 
Let $S$ be a finite, non-empty set of prime numbers. 
Then, there exist effectively computable positive numbers $\tau$ and $n_0$, depending 
only on $\theta$ and $S$, such that
$$
[q_{n-1} q_n q_{n+1}]_S \le (q_{n-1} q_n q_{n+1})^{1 - \tau}, \quad \hbox{for $n\ge n_0$}.
$$
Furthermore, we have 
$$
P[q_{n-1} q_n q_{n+1}] \gg_\theta \log \log q_n \cdot
{\log \log \log q_n \over \log \log \log \log q_n}, \quad n \ge 2.     
$$
\end{theorem}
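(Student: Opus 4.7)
The plan is to reduce both statements to Cassini's identity for continued-fraction denominators,
$$
q_{n+1} q_{n-1} - q_n^2 = (-1)^{n+1},
$$
combined with refined lower bounds for linear forms in logarithms. The non-Liouville hypothesis on $\theta$ yields an effective $\mu \ge 2$ with $q_{n+1} \le q_n^{\mu-1}$ for $n \ge n_1(\theta)$, so that $q_{n-1}, q_n, q_{n+1}$ are polynomially comparable and one may pass freely between them.

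For the first assertion I would argue by contradiction. Assuming $[q_{n-1}q_nq_{n+1}]_S > (q_{n-1}q_nq_{n+1})^{1-\tau}$ and writing $q_i = A_i \prod_{\ell \in S} \ell^{r_i(\ell)}$ with $A_i$ coprime to $\prod_{\ell \in S} \ell$, the hypothesis becomes $A_{n-1}A_nA_{n+1} < (q_{n-1}q_nq_{n+1})^\tau$. Dividing Cassini's identity by $q_n^2$ produces the Archimedean-small linear form
$$
\Lambda := \frac{q_{n-1}q_{n+1}}{q_n^2} - 1 = \pm q_n^{-2}.
$$
I factor $q_{n-1}q_{n+1}/q_n^2 = \alpha_0 \prod_{\ell \in S} \ell^{r_{n-1}(\ell) + r_{n+1}(\ell) - 2 r_n(\ell)}$ with $\alpha_0 := A_{n-1}A_{n+1}/A_n^2$ and exponent $b_0 = 1$, then apply the Feldman--Matveev estimate \eqref{eq4} of Theorem~\ref{lflog}, taking $\alpha_0$ as the distinguished $n$-th algebraic number. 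Since $h^*(\alpha_0) \ll \tau \log q_{n+1}$ and $B \ll \log q_{n+1}$, we obtain $B/h^*(\alpha_0) \ll 1/\tau$ and hence
$$
2 \log q_n = -\log|\Lambda| \;\le\; K_S \cdot \tau (\mu - 1) \log q_n \cdot \log(1/\tau),
$$
with $K_S$ an effective constant depending only on $|S|$ and $\max_{\ell \in S} \log \ell$. Choosing $\tau$ below an explicit threshold $\tau(S,\theta) > 0$ violates this inequality, giving the first assertion.

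For the lower bound on $P := P[q_{n-1}q_nq_{n+1}]$ I would take $S := \{\ell \text{ prime}: \ell \le P\}$, so that $A_i = 1$ and $h^*(\alpha_0) = 1$. A naive use of \eqref{eq4}, with its $n^{3n}$ penalty, recovers only Shorey's $P \gg \log\log q_n$. To gain the extra factor $\log\log\log q_n/\log\log\log\log q_n$ I would invoke instead one of the sharper estimates lacking this factor---either \eqref{eq3} or \eqref{eq5} of Theorem~\ref{lflog}, or the $p$-adic analogue \eqref{eq8} of Theorem~\ref{Yu}, the latter applied to a linear form extracted from the recursion $q_{n+1} - q_{n-1} = a_{n+1} q_n$ at a suitably chosen prime $\ell \in S$. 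These bounds replace $\log B$ by $\log(B \, \max_j h^*(\alpha_j)/h^*(\alpha_n))$; selecting $\alpha_n$ to be the largest prime $P$ in $S$ shrinks the effective logarithm. The PNT estimates $\pi(P) \sim P/\log P$ and $\sum_{\ell \le P} \log\log \ell \sim (P\log\log P)/\log P$ then give $\log K_S \sim (P\log\log P)/\log P$, so the resulting inequality $2 \log q_n \le K_S \cdot (\log\log q_n + O(1))$ becomes $P\log\log P/\log P \gg_\theta \log\log q_n$, which solves to the claimed $P \gg_\theta \log\log q_n \cdot \log\log\log q_n/\log\log\log\log q_n$.

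The principal obstacle is precisely this quantitative bookkeeping: one must avoid the $n^{3n}$ factor by using the sharper forms \eqref{eq3}, \eqref{eq5}, or \eqref{eq8} rather than \eqref{eq2}, \eqref{eq4}; carefully control the contribution of the partial quotient $a_{n+1}$ when a $p$-adic identity is used, since primes dividing $a_{n+1}$ need not lie in $S$ and inflate the height of the auxiliary algebraic number; and invoke PNT-level asymptotics on $\pi(P)$ and $\sum_{\ell \le P} \log\log\ell$ to extract the precise $\log\log\log q_n/\log\log\log\log q_n$ gain from the implicit constants.
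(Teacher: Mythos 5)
Your starting point is incorrect: Cassini's identity $q_{n+1}q_{n-1} - q_n^2 = (-1)^{n+1}$ does \emph{not} hold for the denominators of the convergents of a general continued fraction. It holds when all partial quotients are equal (Fibonacci numbers, $\sqrt 2$, more generally Lucas sequences), but for a general $\theta$ one has $q_{n+1}q_{n-1}-q_n^2 = a_{n+1}q_nq_{n-1}+q_{n-1}^2-q_n^2$, which is unbounded. For instance, with $\theta=e=[2;1,2,1,1,4,\ldots]$ one gets $q_0=1,\,q_1=1,\,q_2=3$ and $q_2q_0-q_1^2=2$. The true determinant identity, $p_nq_{n-1}-p_{n-1}q_n=(-1)^{n+1}$, involves the numerators, not three consecutive denominators. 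Consequently the Archimedean linear form $\Lambda=q_{n-1}q_{n+1}/q_n^2-1=\pm q_n^{-2}$ on which your entire argument rests is not available, and the rest of the proposal (bounding $h^*(\alpha_0)$ by $\tau\log q_{n+1}$, etc.) has nothing to apply to.

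The relation that is always true for three consecutive denominators is the recursion $q_{n+1}-q_{n-1}=a_{n+1}q_n$, hence $q_n\mid(q_{n+1}-q_{n-1})$. This is what the paper uses, and it forces a \emph{$p$-adic}, not Archimedean, approach: one estimates $v_\ell(q_{n+1}-q_{n-1})$ for the primes $\ell\in S$ dividing $q_n$ via Theorem~\ref{Yu} applied to $q_{n+1}/q_{n-1}-1$ (note $\gcd(q_n,q_{n\pm1})=1$, so $v_\ell(q_{n+1}/q_{n-1}-1)=v_\ell(q_{n+1}-q_{n-1})\ge v_\ell(q_n)$). The non-Liouville hypothesis gives $q_{n+1}<q_n^\mu$ and allows one to compare $B_n$ with $\log q_{n+1}$. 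The dichotomy in \eqref{eqYu}/\eqref{eq6}--\eqref{eq7} then yields either $B_n\ll_S h^*(A_n)$ or $\log q_n-\log A_{0,n}\ll_S h^*(A_n)\log(B_n/h^*(A_n))$, and both branches lead to $A_n\gg Q_n^{\eta}$. The prime-factor lower bound in the second assertion is obtained exactly as in the paper's Theorem~\ref{qqS}: take $S$ to be the first $s$ primes, assume $A_n=1$, and use $\prod_{i\le s}\log\ell_i\le 3^{s\log\log s}$ (PNT), which is why the Matveev-type dependence on $s$ in the linear-forms estimate is essential. Your bookkeeping in that part is sound, but it cannot be run without a valid identity to start from; an Archimedean route based on $q_{n+1}q_{n-1}/q_n^2$ cannot work because that quantity is simply not close to $1$.
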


It follows from the proof of Theorem \ref{ShoBis} that $1 / (c \mu(\theta) \log \mu(\theta))$ 
is a suitable value for $\tau$, where $\mu (\theta)$ is the irrationality exponent 
of $\theta$ (see Definition \ref{IrrExp}) and $c$ a positive, effectively computable, real number 
depending only on $S$. 

We denote by $\lfloor x \rfloor$ the integer part of the real number $x$.
Theorem \ref{ShoS} is a consequence of the following result. 

\begin{theorem}   \label{qqS} 
Let $S$ be a finite, non-empty set of prime numbers. 
For every irrational, real algebraic number $\xi$, 
there exist effectively computable positive numbers $q_0$ and $\tau$, depending 
only on $\xi$ and $S$, such that
$$
[q \lfloor q \xi \rfloor]_S \le (q \lfloor q \xi \rfloor)^{1 - \tau}, \quad \hbox{for $q\ge q_0$}.
$$ 
Furthermore, we have
$$
P[q \lfloor q \xi \rfloor ] \gg_\xi \log \log q \cdot 
{\log \log \log q \over \log \log \log \log q}.
$$
\end{theorem}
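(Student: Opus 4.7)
Set $p := \lfloor q\xi\rfloor$, so $0 \le q\xi - p < 1$; for $q$ large, $p > 0$ and $|p/(q\xi) - 1| \le 1/(|\xi|q)$. Decompose $p = [p]_S \cdot b$ and $q = [q]_S \cdot a$ with $a, b$ positive integers coprime to every prime in $S$, and write $[p]_S = \prod_{i=1}^s \ell_i^{r_i}$, $[q]_S = \prod_{i=1}^s \ell_i^{u_i}$. Then
$$\Lambda := \log\bigl(p/(q\xi)\bigr) = -\log\xi + \log(b/a) + \sum_{i=1}^s (r_i - u_i)\log\ell_i$$
is nonzero (because $\xi$ is irrational) and satisfies $|\Lambda| \ll_\xi 1/q$. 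Put $B := \max\{3, |b_j|\}$, so $B \le C_\xi \log q$.

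\textbf{First assertion.} Suppose for contradiction that $[pq]_S \ge (pq)^{1-\tau}$ for some small $\tau \in (0, 1/2]$. Then $ab = pq/[pq]_S \le (pq)^\tau$, so
$$h^*(b/a) \le \log\max(a,b) + 1 \le 2\tau\log q + O_\xi(1).$$
Apply estimate \eqref{eq4} of Theorem \ref{lflog} to $\Lambda$, taking $\alpha_n = b/a$ (so $|b_n| = 1$); the remaining $\alpha_j$'s are $\xi$ and the primes $\ell_i$, whose heights are bounded in terms of $\xi$ and $S$. This yields
$$\log|\Lambda| \ge -c(\xi, S)\, h^*(b/a)\, \log\bigl(B/h^*(b/a)\bigr),$$
while $\log|\Lambda| \le -\log q + O_\xi(1)$. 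For $\tau$ small, $h^*(b/a) \ll \tau\log q \ll B$, hence $B/h^*(b/a) \ge 1/(3\tau)$, and the combined inequality reads $\log q \le 3c(\xi, S)\,\tau\log q\,\log(1/\tau) + O_\xi(1)$. Sending $q \to \infty$ forces $1 \le 3c(\xi, S)\,\tau\log(1/\tau)$, whence an effective lower bound $\tau \ge \tau_0(\xi, S) > 0$.

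\textbf{Second assertion.} Take $S$ to be the set of all primes dividing $pq$, so that $a = b = 1$ and $\Lambda = -\log\xi + \sum_{i=1}^t(r_i - u_i)\log\ell_i$, where $t$ is the number of distinct prime divisors of $pq$ and $P := P[pq]$ is the largest of them. Now apply estimate \eqref{eq5} of Theorem \ref{lflog} with $\alpha_n = \xi$ and $b_n = -1$: since $h^*(\xi) = O_\xi(1)$ and $\max_{j < n} h^*(\alpha_j) \le \log P$, combining with $\log|\Lambda| \le -\log q + O_\xi(1)$ gives
$$\log q \le c_5^{t+1}\Bigl(\prod_{i=1}^t \log\ell_i\Bigr) h^*(\xi)\,\log(B\log P/h^*(\xi)) + O(1).$$
Using $\prod_i \log\ell_i \le (\log P)^t$ and taking logarithms produces $\log\log q \le t\log\log P + O_\xi(t + \log\log\log q)$, so that $t \gg_\xi \log\log q/\log\log P$ for $q$ large. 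Since $P$ dominates the $t$-th prime, $P \gg t\log t$; substituting $t \sim \log\log q/\log\log P$ and solving asymptotically for $P$ yields
$$P \gg_\xi \frac{\log\log q \cdot \log\log\log q}{\log\log\log\log q}.$$

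\textbf{Main obstacle.} The delicate point is selecting the right variant of the logarithmic-forms estimate in each part. In the first assertion, the positive effective $\tau$ is extracted only because \eqref{eq4} features $h^*(\alpha_n)$ in the denominator of $\log(B/h^*(\alpha_n))$---the $B'$-type refinement that is the paper's main theme; using the cruder \eqref{eq1} would replace $\log(1/\tau)$ by $\log\log q$ and the contradiction would disappear. In the second assertion, the gain of $\log\log\log q/\log\log\log\log q$ over Shorey's bound $P \gg \log\log q$ comes from using \eqref{eq5}, which trades Matveev's $n^{3n}$ factor for the milder $\max h^*(\alpha_j)/h^*(\alpha_n)$; verifying through the asymptotic analysis that this trade yields precisely the claimed exponent is the main technical step.
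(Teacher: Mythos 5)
Your argument follows essentially the same route as the paper's: set up the linear form in $\log\xi$, the $S$-free ratio, and the $\log\ell_i$, note that it is $\ll_\xi 1/q$, and apply a $B'$-refined estimate of Theorem~\ref{lflog} with the low-height $S$-free ratio (you) or $A_q/A_p$ (the paper) as $\alpha_n$; the second assertion is obtained exactly as in the paper, via the $n$-dependence in \eqref{eq5} combined with the prime number theorem. One step in the first assertion should be tidied: the chain ``$h^*(b/a)\ll\tau\log q\ll B$, hence $B/h^*(b/a)\ge 1/(3\tau)$'' both assumes $B\gg\log q$ (true, but it needs the Liouville-type lower bound on $|\Lambda|$ to rule out $B$ small) and runs the inequality the wrong way for your purpose, since you need an \emph{upper} bound on $h^*(b/a)\log\bigl(B/h^*(b/a)\bigr)$; the clean fix is to use that $y\mapsto y\log(CB/y)$ is increasing for $y<CB/\mathrm{e}$ and evaluate at $y=3\tau\log q$ together with $B\ll_\xi\log q$, which gives the claimed bound $\ll\tau\log q\log(1/\tau)$.
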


\begin{proof} 
Without any loss of generality, we assume that $q$ is large enough.
Set $p = \lfloor q \xi \rfloor$ and observe that $|q \xi - p| < 1$, thus 
$$
0 < |q p ^{-1} \xi - 1| < p^{-1}.
$$
Let $S = \{\ell_1, \ldots , \ell_s\}$ be a finite set of prime numbers with $\ell_1 < \ldots < \ell_s$. 
Let $A_p$ (\resp $A_q$) be the greatest divisor of $p$ (\resp of $q$) coprime 
with $\ell_1 \ldots \ell_s$. 
There are integers
$b_1, \ldots , b_s$ such that
\begin{equation} \label{LaQ}
\Lambda_q := |\xi (A_q / A_p) \ell_{1}^{b_{1}} \ldots \ell_{s}^{b_{s}} - 1| 
\le 1/p.  
\end{equation}
Since $\xi$ is irrational, $\Lambda_q$ is non-zero. 
It then follows from \eqref{eq5} that there exists
an effectively computable constant $c_1$, depending only on $\xi$,
such that
\begin{equation}  \label{LaQ2}
\log \Lambda_q > - c_1^s \, h^*(A) \, \prod_{i=1}^s (\log \ell_i) \, \log {B (\log \ell_s) \over h^*(A)},  
\end{equation}
where $B = \max\{|b_1|, \ldots , |b_s|, 3\}$ and $h^*(A) = \max\{ \log A_q, \log  A_p, 1\}$. 

Since $p \ge 2^B$, we derive from \eqref{LaQ} and \eqref{LaQ2} that 
$$
{B \over h^*(A)} \ll \prod_{i=1}^s (c_1 \log \ell_i) \, \log \biggl( \prod_{i=1}^s (\log \ell_i)  \biggr),
$$
thus there exist $\eta > 0$ and $\tau > 0$, depending only on $\xi$, such that 
$$
h^*(A) \ge {\rm e}^{\eta B},
\quad
[pq]_S \le {p q \over h^*(A)}  \le (pq)^{1 - \tau},
$$
when $q$ is large enough. 

In the special case where $\ell_i$ is the $i$-th prime number and $A_p = A_q = 1$, we derive 
from the Prime Number Theorem that
$$
\prod_{i=1}^s (\log \ell_i) \le 3^{s \log \log s},
$$
thus
$$
\log B \ll_\xi s \log \log s,
$$
and
$$
\log \log p q \ll_\xi s \log \log s \ll_\xi  \ell_s \frac{\log \log \ell_s}{\log \ell_s}. 
$$
This gives
$$
P[q \lfloor q \xi \rfloor ] \gg_\xi \log \log q \cdot 
{\log \log \log q \over \log \log \log \log q} ,
$$
as asserted. 
We see the importance of the dependence on $s$ in \eqref{LaQ2}. By using Waldschmidt's estimate 
\eqref{eq2} in place of Matveev's one \eqref{eq5}, 
we would have to replace the factor $s \log \log s$ by $s \log s$
and we would get the (slightly) weaker statement
$$
P[q \lfloor q \xi \rfloor ] \ggeff \log \log q. 
$$
\end{proof}

\begin{proof}[Proof of Theorem  \ref{ShoBis}] 
For $n \ge 2$, set $Q_n = q_{n-1} q_n q_{n+1}$ and
$A_n$ denote the product $A_{-1,n} A_{0,n} A_{1,n}$, 
where $A_{j,n}$ is the greatest divisor of $q_{n+j}$ coprime 
with $\ell_1 \ldots \ell_s$, for $j = -1, 0, 1$. Write also
$$
Q_n = A_n \ell_1^{b_{1,n}} \ldots \ell_s^{b_{s,n}}, \quad 
B_n = \max\{b_{1,n}, \ldots , b_{s,n}, 3\}
$$
and $h^*(A_n) = \max\{\log A_n, 1\}$. 
Observe that 
$$
q_n \quad \hbox{divides} \quad \gcd(Q_n, q_{n+1} - q_{n-1}). 
$$
For every prime $\ell$ dividing $Q_n$, we deduce from Theorem \ref{Yu} that 
$$
B_n \ll_S h^*(A_n)  \quad \hbox{or} \quad v_{\ell} (q_{n+1} - q_{n-1}) \ll_S h^*(A_n)  \,  \log {B_n \over h^*(A_n) },
$$
thus 
$$
B_n \ll_S h^*(A_n)  \quad \hbox{or} \quad  \log q_n - \log A_{0,n} \ll_S  h^*(A_n)  \, \log {B_n \over h^*(A_n) }.
$$
Note that $Q_n \ge 2^{B_n}$, thus
$$
2^{B_n / 3} \le q_{n+1}. 
$$
Since $\theta$ is not a Liouville number, there exists a positive $\mu$ such that 
$q_{n+1} < q_n^\mu$ and we get
$$
\log q_{n+1} < \mu \log q_n \ll_S  \mu h^*(A_n)  \, \log {\log q_{n+1} \over h^*(A_n) }.
$$
This gives a bound for $n$ such that $h^* (A_n) = 1$  
and the existence of $\eta > 0$ and $n_0$, depending only on $\mu$ and $S$, such that 
$$
A_n > q_{n+1}^\eta > Q_n^{\eta / 3}, \quad n > n_0. 
$$
thus
$$
[Q_n]_S \le Q_n^{1 - \eta / 3}, \quad n > n_0. 
$$
We derive the last statement of the theorem by taking for $\ell_1, \ldots , \ell_s$ the first 
$s$ prime numbers and assuming that $A_n = 1$. We omit the details. 
\end{proof}

\vskip 6mm

\noindent{\bf Acknowledgements. } 
This paper is an extended version, along with some new results, of the talk I gave at the 
Number Theory Conference held in Debrecen in July 2022. I would like to warmly thank the 
organizers. I am also very grateful to K\'alm\'an Gy\H ory for his constant encouragement, since
his first invitation to visit the University 
of Debrecen in 1995, while I was a PhD student.

\end{document}